\def\gap{\vspace{.3cm}}
\def\smallskip{\vspace{.15cm}}
\def\medskip{\vspace{.3cm}}
\def\text{\mbox}
\def\rh2{{\mathbb R}{\mathbb H}^2}
\def\ch2{{\mathbb C}{\mathbb H}^2}
\def\CC{\operatorname{Core}}
\def\RP2{{\mathbb{RP}}^2}
\def\RP3{{\mathbb{RP}}^3}
\def\int{\operatorname{int}}
\def\PGL3{PGL(3{,\mathbb R})}
\def\PGL4{PGL(4{,\mathbb R})}
\def\H2R{{\mathbb H}^2\times {\mathbb R}}
\def\diam{\operatorname{diam}}
\def\stab{\operatorname{Stab}}
\def\vol{\operatorname{vol}}
\def\incl{\operatorname{incl}}
\def\Dcal{\mathcal D}
\def\cl{\rm{cl}\thinspace}
\def\dim{\rm{dim}}
\def\Scal{{\mathcal S}}
\def\Qcal{{\mathcal Q}}
\def\Xcal{{\mathcal X}}
\def\Fcal{{\mathcal F}}
\def\Ccal{{\mathcal C}}
\def\Acal{\mathcal A}
\def\Hcal{\mathcal H}
\def\Bcal{\mathcal B}
\def\Lcal{\mathcal L}
\def\Mcal{\mathcal M}
\def\Ncal{\mathcal N}
\def\Mcal{\mathcal M}
\def\Pcal{\mathcal P}
\def\Rcal{\mathcal R}
\def\Vcal{\mathcal V}
\def\Wcal{\mathcal W}
\def\Jcal{\mathcal J}
\def\z2{{\mathbb Z}/2}
\def\coker{\operatorname{coker}}
\def\cl{\operatorname{cl}}
\def\CH{\operatorname{CH}}
\def\prefab{Z}
\newtheorem{theorem}{Theorem}[section]
\newtheorem{lemma}[theorem]{Lemma}
\newtheorem{corollary}[theorem]{Corollary}
\newtheorem{proposition}[theorem]{Proposition}
\newtheorem{definition}[theorem]{Definition}
\newtheorem{claim}[theorem]{Claim}
\begin{document}\today

\title{Finite-volume Hyperbolic 3-Manifolds  contain immersed Quasi-Fuchsian surfaces. }
\author{Mark D. Baker}
\author{Daryl Cooper}

\address{IRMAR,
Universit\'e de Rennes 1,
35042 Rennes Cedex, FRANCE}
\address{Department of Mathematics, University of California, Santa Barbara, CA 93106, USA}

\email[]{mark.baker@univ-rennes1.fr}
\email[]{cooper@math.ucsb.edu}

\begin{abstract} The paper contains a new proof 
that a complete, non-compact hyperbolic $3$-manifold $M$ with finite volume contains an immersed, closed,
quasi-Fuchsian surface. \end{abstract}
\thanks{Authors partially supported by NSF grant 1207068.
Cooper also by NSF 0706887, 1065939, 1045292;  
and DMS 1107452, 1107263, 1107367 {\it RNMS: GEometric structures And Representation varieties} the {\it GEAR} Network,
and the CNRS
}

\thanks{ \today}

\maketitle
%
%
%

A {\em complete finite-volume hyperbolic $3$-manifold with cusps} is a non-compact hyperbolic $3$-manifold
with finite volume and universal cover hyperbolic space.
We give a new proof of the following result of Masters and Zhang \cite{MZ1}, \cite{MZ2}. 

\begin{theorem}\label{quasifuchsiansurfacegroups} Suppose $M$ is a complete finite-volume hyperbolic $3$-manifold with cusps. Then  
there is a $\pi_1$-injective immersion $f:S\longrightarrow M$ of a closed, orientable surface $S$ with genus at least $2$
and $f_*(\pi_1S)$ is a quasi-Fuchsian subgroup of $\pi_1M$.\end{theorem}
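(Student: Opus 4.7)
The plan is to construct the surface $f:S\to M$ by assembling controlled geometric pieces of two types: prefabricated pieces near each cusp (the \emph{prefabs} $Z$), and nearly-geodesic pieces in the thick part that connect them. The novelty of the cusped setting, compared with the closed case handled by Kahn--Markovic, is that the surface must enter each cusp but re-emerge in a way that no cusp element gets conjugated into $f_*\pi_1 S$; this is exactly what the prefabs are designed to ensure.

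First I would fix a Margulis constant $\eps$ and split $M$ into its thick part $M_{\geq\eps}$ and a disjoint union of cusp neighborhoods $M_{<\eps}$, each diffeomorphic to $T^2\times[0,\infty)$. For each cusp I would build a prefab $Z$: an immersed compact surface with boundary sitting inside the cusp neighborhood, whose boundary circles lie on the boundary torus of the thick part and are nearly geodesic of controlled length. The prefab is engineered so that any lift of $Z$ to $\HH^3$ lies in a bounded neighborhood of a totally geodesic plane --- it ascends partway into the cusp and ``turns around,'' with opposite boundary components having slopes twisted relative to one another but each staying uniformly far from the cusp slope. Thus each boundary curve is a loxodromic element of $\pi_1 M$ whose complex length and framing at the thick/thin interface are prescribed up to small error.

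Next, in the thick part I would use an exponential mixing / Kahn--Markovic argument to produce a large family of immersed nearly-geodesic pairs of pants whose cuff curves and framings are equidistributed on the relevant frame bundle. Matching these cuffs to the prescribed boundary data of the prefabs (up to exponentially small error in length and bending), one glues finitely many pants and prefabs along matched boundary curves to obtain a closed oriented surface $S$ of genus at least $2$ immersed in $M$. A local-to-global convexity argument --- every piece is nearly totally geodesic and the bending along gluing curves is uniformly small --- shows that the developing map of $\widetilde S$ in $\HH^3$ is a quasi-isometric embedding; equivalently, the limit set of $f_*\pi_1 S$ is a quasi-circle, so $f$ is $\pi_1$-injective and $f_*\pi_1 S$ is quasi-Fuchsian. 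The prefabs, by construction, rule out any parabolic in $f_*\pi_1 S$, since each lift of $Z$ is trapped near a geodesic plane rather than a horosphere.

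The hard part will be the construction of the prefabs and their compatibility with the thick-part distribution. One has to realize curves on each cusp torus so that the corresponding ``up and back down'' surface piece in $\HH^3$ actually closes up and stays close to a geodesic plane with controlled boundary data, and the set of allowable boundary slopes, lengths and framings coming from prefabs must be rich enough to lie in the effective support of the Kahn--Markovic equidistribution on the thick part. Verifying that the assembled surface is quasi-Fuchsian globally --- so that no new parabolic is produced by an unexpected combination of prefab and pants --- is the technical core of the argument.
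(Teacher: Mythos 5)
Your outline is not the paper's argument but a sketch of the ``good pants'' strategy: decompose $M$ into thick and thin parts, produce equidistributed nearly-geodesic pairs of pants in the thick part by Kahn--Markovic exponential-mixing methods, and close the surface off near each cusp with special ``turn-around'' pieces. As a proof it has a genuine gap, and the gap is exactly where you locate it yourself: the construction of the cusp pieces and their compatibility with the pants equidistribution is asserted, not carried out. Nothing in the proposal shows that for each cusp torus there exist immersed compact pieces that ascend into the cusp, close up, stay in a bounded neighborhood of a geodesic plane, and have boundary curves whose complex lengths and feet lie in the effective support of the thick-part equidistribution; nor is the Kahn--Markovic machinery itself available off the shelf here, since their counting and matching of good pants is proved for \emph{closed} hyperbolic $3$-manifolds, and in the finite-volume cusped case geodesics and pants excurse into the thin part, so both the equidistribution statement and the local-to-global (``small bending implies quasi-Fuchsian'') estimate need uniform control that the proposal does not supply. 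Likewise, ``each lift of $Z$ is trapped near a geodesic plane, hence no parabolics'' is a plausibility claim, not an argument: ruling out accidental parabolics in $f_*\pi_1S$ requires a quantitative statement about the developing map of the assembled surface, including the glued regions where prefab meets pants. Carrying this program out is a substantial theorem in its own right (it was done, with considerable new technology, well after the closed case), so deferring it as ``the hard part'' leaves the proof incomplete.

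For contrast, the paper takes an entirely different and softer route, and its word ``prefabricated'' refers to a $3$-manifold, not a surface piece in a cusp: one first produces, via the character variety and Culler--Shalen theory, two essential quasi-Fuchsian surfaces with boundary meeting every cusp in different slopes; one then analyzes the intersection of the corresponding convex QF $3$-manifolds (the ``spider'' analysis), passes to finite covers using conservative subgroup separability so that each rank-$2$ cusp sees exactly one rank-$1$ cusp of each family (``simple combinatorics''), and glues the pieces together with horocusps using the convex combination theorem. The resulting prefabricated $3$-manifold has a convex thickening whose boundary is a closed incompressible surface with no accidental parabolics, and quasi-Fuchsianness then comes for free from Thurston--Bonahon covering-theorem dichotomy rather than from any nearly-geodesic estimate. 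The trade-off is that the paper's method gives no control on the geometry of the surface (it is far from totally geodesic), whereas your proposed route, if completed, would produce nearly geodesic surfaces; but completing it requires the cusped analogue of the Kahn--Markovic machinery, which is precisely what is missing from the proposal.
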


 Cooper, Long and Reid \cite{CLR} showed
 that such manifolds contain geometrically finite closed surface groups but there might be accidental parabolics.
 Kahn and Markovic \cite{KahnM} have shown that a {\em closed} 
 hyperbolic 3-manifold contains an immersed QF ({\em quasi-Fuchsian}) surface. 
  
A  {\em prefabricated} 3-manifold, $\prefab$, is the union of
a finite number of convex {\em pieces}, each of which is either a rank-$2$ cusp or a QF 
   manifold $Q_i$ with rank-$1$ cusps.  We require {\em simple combinatorics}:   there are 
exactly two rank-1 cusps, with slopes that intersect once, inside each rank-2 cusp of $\prefab$.
See (\ref{prefab}) for the precise definition.
   The convex combination theorem  \cite{BC1} is used to ensure $\prefab$ has a convex thickening, $\CH(\prefab)$. 
   In this case $\partial \prefab$ consists of closed incompressible surfaces without parabolics. The main theorem
   follows from (\ref{fabricatedcover}) which says there is a covering space of $M$ 
   that has a convex core which is a prefabricated manifold.
   This construction of  QF surfaces is  similar to the method used in \cite{BC1}
   and  \cite{CL1}.

 The crucial step is to control how the QF manifold pieces of $\prefab$ intersect. In section (3) we study
 the intersection  $Q_1\cap Q_2$ of two QF manifolds with cusps.
  This is governed by a finite collection of convex subsurfaces
 immersed by local isometries into $\partial Q_i$. A compact core of $Q_i$ is homeomorphic to an interval times a compact
   surface $F$.
  A {\em spider} is a 
   compact subsurface $X\subset F$, satisfying certain conditions.
    After taking finite covers,  each component of $Q_1\cap Q_2$ is described by a spider.

The crucial step relies on a result about surfaces:  the {\em spider theorem} (\ref{spidertheorem}). 
 Each component of $X\cap \partial F$ is an arc called a {\em foot} of the spider $X$.
We show that 
if  every component of $\partial F$ contains at least one spider foot then,
after replacing $F$ by a suitable finite cover of $F$, and choosing certain lifts of the spiders, every boundary component of $F$ 
contains exactly one spider foot. 
This ensures the above mentioned {\em simple combinatorics} for $\prefab$. 
 
In section (\ref{compare}) we discuss the relation between our proof and that of Masters and Zhang.

\noindent{\bf Acknowledgements:}~ The authors thank {\em IHP} and Universit\'e de Rennes 1 for hospitality during completion of this work. The authors were partially supported by NSF grant 1207068.
Cooper also by NSF 0706887, 1065939, 1045292;  
and DMS 1107452, 1107263, 1107367 {\it RNMS: GEometric structures And Representation varieties} the {\it GEAR} Network, and the CNRS

%
%
%
%
\section{Prefabricated $3$-manifolds}

In this section we define prefabricated $3$-manifolds and use the convex combination theorem to
 make them convex. We show the boundary consists of incompressible surfaces without parabolics.
 First we review some material about convex hyperbolic manifolds; see section 2 
of \cite{BC1} for further discussion.

\gap The following definition is {\bf not standard}.
A {\em hyperbolic manifold} is a  smooth $n$-manifold,
possibly with boundary, equipped with a metric so that every point has a neighborhood
that is isometric to a subset of hyperbolic space, ${\mathbb H}^n$.
An example is a compact annulus in ${\mathbb H}^2$.
A connected hyperbolic $n$-manifold $M$ is {\em convex} if every pair of points in the universal cover $\tilde{M}$ is connected
by a geodesic. It is {\em complete} if the universal cover is isometric to
${\mathbb H}^n$, and
 {\em metrically complete}
if every Cauchy sequence converges. 
 
 \gap

 If a hyperbolic $n$-manifold $M$ is convex, then
  the developing map embeds   $\tilde{M}$ isometrically into ${\mathbb H}^n$, and the covering transformations
of $\tilde{M}$ extend to give a group $\Gamma$ of  isometries of ${\mathbb H}^n$,
and $M$ is isometric to a submanifold of ${\mathbb H}^n/\Gamma$.  If $M$ is convex and 
$f:M\longrightarrow N$
is a local isometry into a hyperbolic $n$-manifold $N$, then $f$ is $\pi_1$-injective. 

  A hyperbolic $n$-manifold, 
$N$, is a {\em thickening} of a connected hyperbolic $n$-manifold, $M,$ 
  if $M\subset N$ and $\incl_*:\pi_1M\rightarrow\pi_1N$ is an isomorphism.  
 If, in addition, $N$ is convex then  $N$ is called  a {\em  convex thickening} of $M.$
 
 If $M$ is a subset of a metric space $N$, the {\em $\kappa$-neighborhood} of $M$ in 
$N$ is  $${\mathcal N}_K(M;N)=\{x\in N\ :\ d(x,M)\le \kappa\ \}$$
 If $M$ is a disjoint union of convex hyperbolic manifolds $M_i$, and ${\kappa}\ge0$,
the {\em $\kappa$-thickening} of $M$ is the disjoint union of the  convex thickenings of the components:
\[Th_{\kappa}(M)\ =\ \sqcup_i\ {\mathcal N}_{\kappa}(\tilde{M_i};{\mathbb H}^n)/\pi_1M_i\]

A {\em horocusp} is $C=\Bcal/\Gamma$ where
 $\Bcal\subset {\mathbb H}^3$ is a horoball and $\Gamma$  is a discrete, rank-2 free-abelian group of parabolics that
preserve $\Bcal$. Thus $\partial C=\partial \Bcal/\Gamma$ is  a {\em horotorus}. 

A {\em finite-area Fuchsian group} is a subgroup $\Gamma_F\subset Isom({\mathbb H}^2)$
such that $F={\mathbb H}^2/\Gamma_F$ is an orientable,  
hyperbolic surface with finite area. This is sometimes called
a {\em finitely generated Fuchsian group of the first kind}. 
Throughout this paper Fuchsian groups have finite area. 
An essential loop in a hyperbolic surface  is {\em peripheral} if it is  freely 
homotopic into the boundary, or into a cusp.
Since $F$ has finite area, every peripheral loop in $F$
has parabolic holonomy.

We fix an embedding ${\mathbb H}^2\subset{\mathbb H}^3$ and use this to
 identify $Isom({\mathbb H}^2)$ with a subgroup of $Isom({\mathbb H}^3)$.
Then there is a corresponding {\em Fuchsian $3$-manifold} $M_F={\mathbb H}^3/\Gamma_F$
which contains $F$ as a totally geodesic surface.

A {\em QF (quasi-Fuchsian) group}
is a subgroup $\Gamma\subset Isom({\mathbb H}^3)$ such that 
$M_{\Gamma}={\mathbb H}^3/\Gamma$ is a hyperbolic $3$-manifold
that is bilispchtiz homeomorphic to a Fuchsian $3$-manifold. 
A  $3$-manifold  is {\em QF} if it is convex and the holonomy is
a QF group. 

\begin{definition}\label{prefab}
A {\em prefabricated manifold} is a connected, metrically complete, finite-volume, hyperbolic $3$-manifold 
$$\prefab=\Ccal\cup \Qcal_1\cup \Qcal_2$$
Each component of $\Qcal_i$ and of $\Ccal$ is a convex hyperbolic $3$-manifold called a {\em piece}. 
Each component of $\Qcal_i$ is a 
QF $3$-manifold with at least one cusp. 
Each component of $\Ccal$ is  a horocusp. These pieces satisfy the 
following conditions for $i\in\{1,2\}$, and for each component $C$ of $\Ccal$:
\begin{enumerate}
\item[(P1)] $\Qcal_i\cap\Ccal$ is the disjoint union of all the cusps in $\Qcal_i$
\item[(P2)]  $\Qcal_i\cap \partial C$  is an annulus with core curve $\alpha_i(C)$  
\item[(P3)] $\alpha_1(C)$ intersects  $\alpha_2(C)$ once transversally
\item[(P4)] Each component of $\Qcal_1\cap \Qcal_2$ intersects $\Ccal$
\end{enumerate}
\end{definition}

In general $\prefab$ is not isometric to a submanifold of ${\mathbb H}^3/\Gamma$ for any Kleinian group $\Gamma$. 
Under additional hypotheses $\prefab$ has a convex thickening, (\ref{prefabconvex}).
A {\em complete prefabricated manifold} is a complete hyperbolic $3$-manifold which is a convex thickening
of a prefabricated manifold. The following is an immediate consequence of (\ref{makeprefab})

\begin{theorem}\label{fabricatedcover} Suppose $M$ is a finite-volume hyperbolic $3$-manifold with cusps. Then $M$ has a covering
space which is a complete prefabricated manifold.
\end{theorem}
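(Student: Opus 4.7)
The plan is to assemble, inside a finite cover of $M$, two families of quasi-Fuchsian pieces together with a collection of rank-$2$ horocusps that satisfy Definition \ref{prefab}, and then invoke (\ref{prefabconvex}) to obtain a convex thickening. The three ingredients I would combine are the Cooper--Long--Reid construction of geometrically finite closed surface groups in $\pi_1 M$ (possibly with accidental parabolics), the spider theorem (\ref{spidertheorem}), and subgroup separability for the peripheral tori of $M$.

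First I would produce the raw QF pieces. Using \cite{CLR}, pick two geometrically finite immersed closed surfaces $\Sigma_1,\Sigma_2$ in $M$; the covers $Q_1,Q_2\to M$ corresponding to $\pi_1\Sigma_i$ are QF $3$-manifolds with rank-$1$ cusps, the cusps encoding the accidental parabolics. The $\Sigma_i$ can be chosen so that their cusp slopes in each torus cusp of $M$ are essentially independent. I would then pass to a finite cover $\widetilde M\to M$ into which chosen lifts of $Q_1$ and $Q_2$ drop as immersed pieces, together with a canonical collection $\Ccal$ of rank-$2$ horocusps. Conditions (P1) and (P2) are then essentially automatic: each rank-$1$ cusp of $Q_i$ sits inside a torus cusp of $\widetilde M$ as an annular neighbourhood with a well-defined core slope $\alpha_i(C)$.

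The main obstacle is forcing (P3) and (P4) --- the \emph{simple combinatorics}. Here I would use the spider theorem. The intersection $\Qcal_1\cap\Qcal_2$ is described, after finite covers of each $Q_i$, by convex subsurfaces (spiders) in compact cores of the $Q_i$, and each component of $\partial Q_i\cap\partial(\Qcal_1\cap\Qcal_2)$ appears as a spider foot on a boundary circle of the core. The spider theorem gives a finite cover of the core, plus a choice of spider lifts, in which every boundary circle carries exactly one foot. Pulling this surface cover up to a further finite cover of $M$ --- via separability of the peripheral subgroups --- yields $\widetilde M$ in which (P3) holds (the two core slopes intersect once) and (P4) holds (each component of $\Qcal_1\cap\Qcal_2$ meets a cusp).

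At this point $\widetilde M$ contains a prefabricated manifold $\prefab=\Ccal\cup\Qcal_1\cup\Qcal_2$, and (\ref{prefabconvex}) yields its convex thickening, which is a complete prefabricated manifold covering $M$. The delicate step is the spider step: translating the surface-level combinatorial control into a single cover of $M$ that simultaneously tames $\Qcal_1\cap\Qcal_2$ in every cusp. The remaining pieces --- choosing the $\Sigma_i$, realising both $Q_i$ inside a common cover, and verifying the convex thickening hypotheses --- are routine applications of separability and the results of \cite{BC1}.
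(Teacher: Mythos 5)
Your overall scaffolding (produce two families of cusped QF pieces, force simple combinatorics via the spider theorem, then invoke (\ref{prefabconvex})) matches the paper's, but two of your key steps have genuine gaps.

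First, the source of the pieces. Citing \cite{CLR} does not give what Definition \ref{prefab} needs. The CLR surfaces are closed geometrically finite surfaces; their accidental parabolics may be absent entirely, may occur in only some cusps of $M$, and in any case a closed surface group with accidental parabolics is not a QF manifold with rank-$1$ cusps in the sense required of the pieces $\Qcal_i$ (those are convex excellent manifolds associated to punctured-surface groups whose cusps are the punctures). What the construction actually requires -- and what the paper proves as Proposition (\ref{2slopes}) -- is a pair of transverse, embedded, excellently essential, non-compact QF surfaces $\Jcal_1,\Jcal_2$ each meeting \emph{every} cusp torus of $M$, with different slopes in each cusp. This is obtained from the character variety: imposing $p-1$ trace equations cuts out a curve through the complete structure on which, by Mostow--Prasad rigidity, the chosen peripheral classes are simultaneously non-parabolic near an ideal point, and Culler--Shalen theory plus surgering annuli (as in \cite{CL1}) yields the QF surfaces with the slope control. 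Your sentence ``the $\Sigma_i$ can be chosen so that their cusp slopes ... are essentially independent'' is exactly the step that needs proof and is not supplied by \cite{CLR}. (Also, (P3) is ultimately arranged not by choosing slopes of intersection number one, but by gluing onto each end the finite cover of the rank-$2$ cusp corresponding to the subgroup generated by the two slopes.)

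Second, you place the prefabricated manifold inside a finite cover $\widetilde M$ of $M$. The paper never does this, and it cannot be done this way: with $Q_1,Q_2$ merely immersed (or even embedded) in a finite cover, $\Qcal_1\cap\Qcal_2$ has many components in the thick part, so (P4) fails, and (P1) fails as well; separability of the peripheral tori does not tame intersections away from the cusps. The paper instead builds $\prefab$ \emph{abstractly}: Theorem (\ref{gluingregion}) (separability inside the free groups $\pi_1Q_i$) glues finite covers of the two QF pieces along an ideal $3$-spider; walls, their $2$-dimensional spider-surface approximations, and the spider pattern theorem (\ref{spiderpatterntheorem}) applied to a $\delta$-thickened pattern with a uniform spider degree $d$ make the gluing regions embedded, pairwise disjoint, far apart, and simple; covers of the rank-$2$ cusps are then attached, and (\ref{prefabconvex}) with the explicit constant $\kappa\ge 8k$ (needed for the no-bumping hypotheses (Z5),(Z6) feeding the convex combination theorem) produces a convex thickening together with a local isometry $g:\CH(\prefab)\to M$. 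The covering space in the theorem is the cover of $M$ corresponding to $g_*\pi_1\CH(\prefab)$, which is complete and typically of infinite index -- not a finite cover -- and your closing claim that the convex thickening itself ``is a complete prefabricated manifold covering $M$'' conflates these. The quantitative separation of the gluing regions is the heart of the argument and cannot be dismissed as a routine application of \cite{BC1}.
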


The main theorem (\ref{quasifuchsiansurfacegroups})  follows from this and the fact that  a complete prefabricated manifold contains
a  surface group without parabolics (\ref{prefabboundary}). This gives
a surface subgroup of $\pi_1M$ which is not a virtual fiber because
$M$ has cusps. Since it has no parabolics it is QF by (\ref{3kindssurfacegroup}).
This theorem  can also be used to give another proof of the fact (\cite{BC1}, 9.4) that for every essential
simple closed curve $C\subset T$, where $T\subset M$ is a horotorus, there is an {\em essential} immersed surface in $M$
bounded by two copies of a finite cover of $C$.

The geodesic compactification of ${\mathbb H}^n$ is the closed ball $\overline{\mathbb H}^n={\mathbb H}^n\sqcup \partial{\mathbb H}^n$
where $ \partial{\mathbb H}^n=S^{n-1}_{\infty}$.
The {\em limit set} of a subset $A\subset{\mathbb H}^n$ is $\Lambda(A)=\cl(A)\cap  \partial{\mathbb H}^n$ and
 the {\em convex core} $\CC(A)\subset{\mathbb H}^n$ of $A$ is the 
  convex hull of $\Lambda(A)$. 
Thus $\CC(A)$ is empty iff  $\Lambda(A)$ contains at most one point. Moreover if $A$ is convex then $\CC(A)\subset \overline{A}$.
 
 If $M$ has a convex thickening,
  then the {\em convex core of $M$} is $\CC(M)=\CC(\tilde M)/\pi_1M$, and
 the {\em convex hull}, $\CH(M)$, of $M$ is the smallest convex manifold containing $M$.
 A hyperbolic manifold $M$ is
 {\em geometrically finite} \cite{Bowditch} if for all (or some) $\delta>0$
  the $\delta$-thickening of $\CC(M)$ has finite volume.

Suppose that $N$ is a hyperbolic manifold and 
$M\subset N$ is a submanifold. Given $\kappa>0$ we say that $N$ 
 {\em contains a $\kappa$-neighborhood} of $M$ if for every $p\in M$ 
and every tangent vector $v\in T_pM$ with $||v|| \le \kappa$ then $exp_p(v)\in N.$ 
 The next result gives conditions which ensure that a $3$-manifold $M=M_1\cup M_2$,
which is the union of two convex hyperbolic submanifolds $M_1$ and $M_2$, has
 a convex  thickening:

\begin{theorem}[convex combination theorem]\label{convexcomb} Suppose:
\begin{enumerate}
\item[(C1)]  $Y=Y_1\cup Y_2$ is a connected hyperbolic $3$-manifold which is the union of two convex $3$-submanifolds $Y_1$ and $Y_2.$
\item[(C2)]   $M=M_1\cup M_2$ is a connected hyperbolic $3$-manifold which is the union of two convex $3$-submanifolds $M_1$ and $M_2.$
\item[(C3)]  $Y_i$ is a thickening of $M_i$.
\item[(C4)]  $Y$ contains an $8$-neighborhood of $M.$
\item[(C5)]  $Y_i$ contains an $8$-neighborhood of $M_i\setminus (M_1\cap M_2)$.  
\item[(C6)]  {\em No bumping}: Every component of $Y_1\cap Y_2$ contains a point of $M_1\cap M_2.$
\end{enumerate}
  Then  $M$ has a convex thickening and $\CH(M)\subset N_8(M)\subset Y$. 
   \end{theorem}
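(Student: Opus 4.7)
The plan is to show $W := \mathcal{N}_8(M;Y)$ is itself a convex thickening of $M$; since $\CH(M)$ is by definition the smallest convex manifold containing $M$, the inclusions $\CH(M) \subset W \subset Y$ then follow. I would work in the universal cover $\tilde{Y}$ together with the developing map $d : \tilde{Y} \to \HH^3$. Because each $Y_i$ is convex, every component of the preimage of $Y_i$ in $\tilde{Y}$ is developed injectively onto a copy of $\tilde{Y_i} \subset \HH^3$, and similarly for $\tilde{M_i} \subset \tilde{Y_i}$. The whole proof reduces to understanding how translates of $\tilde{Y_1}$ and $\tilde{Y_2}$ fit together in $\HH^3$.

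First I would build the universal cover of $W$ by amalgamation: start from one lift of a component of $M_1$, attach the lifts of $M_2$-components that meet it at lifts of $M_1 \cap M_2$, then the lifts of $M_1$ meeting those, and iterate. The no-bumping hypothesis (C6) prescribes exactly which translates $g\tilde{Y_1}$ and $h\tilde{Y_2}$ can overlap in $\HH^3$: precisely those already glued along a common lift of $M_1 \cap M_2$. Taking the $8$-neighborhoods of these lifts inside the appropriate $\tilde{Y_i}$'s (which are well-defined by (C4) and (C5)) produces a candidate $\widetilde{W}$, and convexity of the $\tilde{Y_i}$ together with discreteness of holonomy shows that $d$ embeds $\widetilde{W}$ into $\HH^3$. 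Then, to verify convexity of $d(\widetilde{W})$, I would pick $p,q \in d(\widetilde{W})$, let $\sigma$ denote the hyperbolic geodesic $\overline{pq}$, and induct on the number of pieces $\sigma$ meets: when $p,q$ lie in a common piece $g\tilde{Y_i}$ the geodesic stays inside by convexity; otherwise $\sigma$ exits a first piece $g\tilde{Y_i}$ at a point $x$ which, by (C4), lies within distance $8$ of $\tilde{M}$, and by (C6) lies in an overlap region with some adjacent piece $h\tilde{Y_j}$, into which $\sigma$ then passes.

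The main obstacle is quantitative: one has to check that $8$ units of elbow room genuinely suffice at every transition, so that the exit point $x$ lies simultaneously in the $8$-neighborhood of $M_i$ inside $\tilde{Y_i}$ (by (C4)) and of $M_j$ inside $\tilde{Y_j}$ (by (C5)), and that the induction on the number of slab crossings terminates. The latter uses exponential separation of distant convex sets in $\HH^3$ to bound the crossing count, and the constant $8$ is presumably calibrated to exceed the relevant horoball and bisector widths appearing when $Y_i$ has rank-$1$ or rank-$2$ cusps. Once convexity is in hand, verifying that $W$ is a thickening of $M$ reduces to checking that $W$ is connected and that the inclusion $M \hookrightarrow W$ induces an isomorphism of fundamental groups, both of which follow directly from the amalgamation construction and the fact that each $\tilde{Y_i}$ deformation retracts onto $\tilde{M_i}$.
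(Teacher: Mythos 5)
Your plan has a genuine gap, and it also aims at a stronger statement than the theorem makes. You propose to show that $W=\mathcal{N}_8(M;Y)$ is itself a convex thickening; the theorem only asserts that the convex hull $\CH(M)$ is contained in $N_8(M)\subset Y$. Convexity of the $8$-neighborhood of the non-convex union $M_1\cup M_2$ is a strictly stronger claim that the hypotheses do not obviously give: even granting the key estimate that a geodesic with endpoints in $\tilde M$ stays in $N_6(\tilde M)$, a geodesic joining two points of $N_8(\tilde M)$ is only forced into a neighborhood of radius roughly $8+6$ plus thin-triangle constants, so the inductive step of your argument ("the geodesic stays inside by convexity" when both endpoints lie in one piece) controls membership in $\tilde Y_i$, not in the $8$-neighborhood of $\tilde M$, and the induction does not close up. The object to aim at is $\CH(\tilde M)$, not $N_8(\tilde M)$.

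More seriously, the two steps that carry all the content are asserted rather than proved. First, you invoke ``discreteness of holonomy'' and (C6) to conclude that the developing map embeds the amalgamated object and that the only overlaps of translates $g\tilde Y_1$, $h\tilde Y_2$ are the prescribed ones. This is circular: the holonomy of $\pi_1M$ is the group generated by the holonomies of $\pi_1M_1$ and $\pi_1M_2$, and its discreteness and faithfulness --- equivalently, injectivity of the development and absence of unexpected bumping among translates --- is precisely what a combination theorem must deliver, not a hypothesis; moreover (C6) is a statement downstairs about components of $Y_1\cap Y_2$ and does not transfer to the universal cover for free. Second, the quantitative heart --- that $8$ units of room suffice at each transition and that the crossing induction terminates --- is exactly where (C4), (C5), (C6) and the constant $8$ enter, and you explicitly defer it (``presumably calibrated'', ``exponential separation''); the suggestion that $8$ is tied to horoball or bisector widths is not an argument, and in fact the constant arises as $6+2$. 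The paper does not reprove any of this: it quotes Theorem 2.9 of \cite{BC1} for the existence of the convex thickening, quotes Claim 2.2 from that proof (a geodesic segment with endpoints in $\tilde M$ lies in $N_6(\tilde M)$), and then applies Lemma (\ref{convexhull}) to get $\CH(\tilde M)\subset N_2(N_6(\tilde M))\subset N_8(\tilde M)\subset\tilde Y$. A self-contained proof along your lines would essentially have to reproduce the argument of \cite{BC1}, and the estimates you leave unverified are the substance of that argument.
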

 \begin{proof} By theorem 2.9 in 
 \cite{BC1}  $M$ has a convex thickening. Hence there is an isometric embedding
 of the universal cover $\tilde{M}\subset{\mathbb H}^3$.
 Claim (2.2) in the proof of that theorem establishes that if a geodesic segment
 $\gamma$ has endpoints  in $\tilde{M}$ then $\gamma\subset N_6(\tilde M)$.
 By lemma (\ref{convexhull}) below, $\CH(\tilde{M})\subset N_2(N_6(\tilde{M}))$. 
It follows that $\CH(M)\subset N_8(M)$.
 \end{proof}

We use this to show that if a prefabricated manifold $\prefab$ is contained in a much larger one that
is made of thickenings of the pieces in the original, then $\prefab$ has a convex thickening.
The number of connected components of a space $X$ is denoted $|X|$. 

\begin{corollary}\label{prefabconvex} Suppose $\kappa\ge 8k$ where $k=(|\Ccal|+|\Qcal_1|+|\Qcal_2|-1)$  and suppose
\begin{enumerate}
\item[(Z1)] $\prefab^{\kappa}={\mathcal C}^{\kappa}\cup \Qcal_1^{\kappa}\cup\Qcal_2^{\kappa}$  is a prefabricated manifold 
\item[(Z2)]  $\prefab={\mathcal C}\cup \Qcal_1\cup \Qcal_2$ is a prefabricated manifold contained in $\prefab^{\kappa}$
\item[(Z3)] $\Qcal_i^{\kappa}$ is a thickening of $\Qcal_i$ 
\item[(Z4)] $\Ccal^{\kappa}=Th_{\kappa}(\Ccal)$
\item[(Z5)] $\Qcal_i^{\kappa}$ contains a $\kappa$-neighborhood of $\Qcal_i\setminus \Ccal$
\item[(Z6)] Every component of $\Qcal_1^{\kappa}\cap \Qcal_2^{\kappa}$ contains a point of $\Qcal_1\cap \Qcal_2$.
\end{enumerate} Then $\prefab$ has a convex thickening that is a submanifold of $\prefab^{\kappa}$.
\end{corollary}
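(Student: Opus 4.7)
The plan is to apply the convex combination theorem (\ref{convexcomb}) inductively, attaching the pieces of $\prefab$ one at a time. Since $\prefab$ is connected, its pieces can be enumerated $P_0, P_1, \ldots, P_k$ so that each partial union $M^{(j)} := P_0\cup\cdots\cup P_j$ is connected (traverse a spanning tree of the incidence graph on pieces). The bound $\kappa\ge 8k$ enters because the convex combination theorem consumes eight units of thickness per step and exactly $k$ combinations are required.

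I will build, by induction on $j\in\{0,\ldots,k\}$, a convex thickening $N^{(j)}\subset\prefab^{\kappa}$ of $M^{(j)}$, along with an auxiliary large convex neighborhood of $M^{(j)}$ (obtained, for instance, as the $(\kappa-8j)$-neighborhood of $N^{(j)}$ inside $\prefab^{\kappa}$, which is convex because $\epsilon$-neighborhoods of convex sets in $\mathbb{H}^3$ are convex, and remains a thickening so long as it stays inside $\prefab^{\kappa}$). The base case $j=0$ is immediate since each piece is convex. For the inductive step I set $M_1 = M^{(j)}$, $M_2 = P_{j+1}$, let $Y_1$ be this auxiliary neighborhood, and use (Z3)--(Z5) to select $Y_2$, a convex thickening of $P_{j+1}$ inside $\prefab^{\kappa}$ with at least $\kappa - 8j \ge 8$ units of ambient room around $P_{j+1}\setminus\Ccal$. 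Hypotheses (C1)--(C5) then follow directly from (Z3)--(Z5) and the inductive choice of $Y_1$, and the convex combination theorem outputs a convex thickening $N^{(j+1)}$ of $M^{(j+1)}$ with $N^{(j+1)}\subset N_8(M^{(j+1)})\subset Y_1\cup Y_2\subset\prefab^{\kappa}$, completing the induction.

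The main obstacle is verifying the no-bumping hypothesis (C6) at each inductive step, since $N^{(j)}$ accumulates topological complexity as $j$ grows. I plan to choose the enumeration so that every horocusp piece is attached only after every QF piece meeting it. When $P_{j+1}$ is a horocusp $C$, conditions (P1)--(P3) then imply that $Y_1\cap Y_2$ is a disjoint union of thickened annular cusp regions, one per QF piece of $\Qcal_1\cup\Qcal_2$ already in $M^{(j)}$ that meets $C$; each such component contains the core curve $\alpha_i(C)$ of an annulus in $M_1\cap M_2$. When $P_{j+1}$ is instead a QF piece, the new components of $Y_1\cap Y_2$ that do not already lie in a thickened cusp region come from intersections with the opposite family $\Qcal_{3-i}^{\kappa}$ and are exactly those controlled by (Z6); condition (P4) prevents stray components supported away from $\Ccal$. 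After the $k$-th inductive step, $N^{(k)}$ is the required convex thickening of $\prefab$ inside $\prefab^{\kappa}$.
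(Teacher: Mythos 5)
Your overall strategy (induct with the convex combination theorem along a connected enumeration of pieces, spending $8$ units of thickness per step, which is where $\kappa\ge 8k$ enters) matches the paper in spirit, but your attachment order creates a genuine gap that the hypotheses (Z1)--(Z6) cannot close. You glue the QF pieces to each other first and attach each horocusp only after its QF neighbors. At a QF--QF step the intersection $M_1\cap M_2$ is only a union of ideal $3$-spiders, so the rank-$1$ cusp slabs of the QF pieces (minus the spider legs) lie in $M_i\setminus(M_1\cap M_2)$, and (C5) (and (C4)) then demand a convex $Y_i\subset \prefab^{\kappa}$ containing an $8$-neighborhood of these slabs deep inside the rank-$2$ cusps. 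Nothing supplies this: (Z5) explicitly excludes $\Ccal$, (Z3) gives no extra width to the rank-$1$ cusps of $\Qcal_i^{\kappa}$, and your own choice of $Y_2$ only has room "around $P_{j+1}\setminus\Ccal$". Worse, such an $8$-neighborhood of a slab need not even embed in $\prefab^{\kappa}$ (a wide slab can wrap around the torus direction of its cusp), and once $Y_i$ is forced outside $\Qcal_i^{\kappa}$, components of $Y_1\cap Y_2$ in the cusp regions are no longer components of $\Qcal_1^{\kappa}\cap\Qcal_2^{\kappa}$, so (Z6) says nothing about them and your no-bumping argument for (C6) collapses; note also that $Y_1$ contains thickenings of pieces from both families, so (Z6) would not apply directly even away from the cusps. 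A separate flaw: your auxiliary $Y_1$, the $(\kappa-8j)$-neighborhood of $N^{(j)}$ in $\prefab^{\kappa}$, is not convex "because $\epsilon$-neighborhoods of convex sets in $\mathbb{H}^3$ are convex" --- $\prefab^{\kappa}$ is in general not a submanifold of any $\mathbb{H}^3/\Gamma$ (the paper says so explicitly), and convexity of such neighborhoods is exactly what Lemma \ref{8lemma} is for, with hypotheses you have not secured at these steps; moreover a very large neighborhood need not remain a thickening, since it can engulf other pieces and change $\pi_1$.

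The paper's proof is organized precisely to avoid needing neighborhoods of bare rank-$1$ cusp slabs: it first glues a copy of the rank-$2$ cusps $\Ccal$ onto the rank-$1$ cusps of $\Qcal_1$ (and, separately, another copy onto $\Qcal_2$), one cusp at a time, producing cusped convex pieces $\Pcal_1,\Pcal_2$. At those steps the cusp itself is $M_2$, so (C5) only requires neighborhoods of $\Qcal_i\setminus\Ccal$ (exactly (Z5)) and of $\Ccal$ (exactly (Z4)). Only then are the components of $\Pcal_1\sqcup\Pcal_2$ glued together in a connected enumeration; now every cusp region lies either in $M_1\cap M_2$ or entirely inside one $M_i$, so $Y_i=N_8(M_i)$ works via \ref{8lemma}, and (C6) follows from (Z6) together with (P4). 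If you want to salvage your single-chain induction you would have to reorder so that each QF piece is "capped" by its cusps before it is ever glued to a piece of the other family --- which is, in effect, the paper's two-stage decomposition.
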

\begin{proof}

 There is a hyperbolic $3$-manifold
 $\Pcal_1$  whose components are convex
$$\Qcal_1\cup\Ccal\subset \Pcal_1 \subset N_{8|\Ccal|}(\Qcal_1\cup\Ccal)\subset Z^{\kappa}$$
obtained 
 by gluing the components of $\Ccal$ (which are rank-$2$ cusps)
 onto the rank-$1$ cusps in $\Qcal_1$ one at a time, and taking the convex hull of the result each time. 
  This involves applying (\ref{convexcomb})   $|\Ccal|$ times. 
 Each time we attached a cusp requires we thicken by $8$,
 thus $\Pcal_1\subset N_{8|\Ccal|}(\Qcal_1\cup\Ccal)$. 
 It is routine to check the hypotheses of (\ref{convexcomb})
 are satisfied at each step.

(P1) and (P2) imply each cusp of $\Qcal_1$ is contained in a unique component of
$\Ccal$, and each component of $\Ccal$ contains a unique cusp of $\Qcal_1$. 
By (Z3) and (Z5) the components of $\Qcal_1\setminus\Ccal$ are far apart, so each component of $\Pcal_1$ is a thickening of a single
 QF manifold in $\Qcal_1$ with a rank-$2$
 cusp glued onto each rank-$1$ cusp. 

 Next  do the same for $\Qcal_2$
 with another copy of $\Ccal$ to produce  $\Pcal_2$ with
 $$\Qcal_2\cup\Ccal\subset \Pcal_2 \subset N_{8|\Ccal|}(\Qcal_2\cup\Ccal)\subset Z^{\kappa}$$
 
 The final step is to glue  the components of $\Pcal_1$ and $\Pcal_2$ together. Clearly
 $|\Pcal_i|=|\Qcal_i|$, so this involves
 applying (\ref{convexcomb}) $(|\Qcal_1|+|\Qcal_2|-1)$ times.
 Since $\prefab$ is connected we can enumerate the connected components of $\Pcal_1\sqcup \Pcal_2$
 in a sequence so that the union of the components in every initial segment of the enumeration
 is {\em connected}.
 
Inductively on $m$ we have a connected convex manifold $M_1\subset N_{8(|\Ccal|+m-1)}(\prefab)$
 that contains the first $m$ components in the enumeration
 and set $M_2$ equal to the $(m+1)$'th component.  We apply
 (\ref{convexcomb})   with $Y_1=N_8(M_1)\subset Z^{\kappa}$ 
 and $Y_2=N_8(M_2)\subset Z^{\kappa}$. These are convex thickenings by (\ref{8lemma})
 hence properties (C1)-(C5)  hold.
 The  {\em no bumping 
 property} (C6) in (\ref{convexcomb}) follows
 from (Z6) and (P4). 
Then $M_1\cup M_2$ has a convex thickening  
$\CH(M_1\cup M_2)\subset N_8(M_1\cup M_2)\subset N_{8(|\Ccal|+m)}(\prefab)$. \end{proof}

It is routine to show:
\begin{lemma}\label{8lemma} Suppose $M\subset N$ are convex hyperbolic $3$-manifolds and $N$ is a thickening of $M$
and $N_8(\CH(M))\subset N$. Then $N_8(\CH(M))\cong N_8(\CH(\tilde M))/\pi_1M$ is a convex thickening
of $M$.
\end{lemma}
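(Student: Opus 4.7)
The strategy is to lift everything to $\HH^3$, show the $8$-neighborhood of $\CH(\tilde M)$ in $\HH^3$ is convex and sits inside $\tilde N$, then quotient by $\pi_1 M$. First I would use convexity of $M$ and $N$ to develop $\tilde M$ and $\tilde N$ isometrically into $\HH^3$. Because $N$ is a thickening of $M$, the inclusion-induced map $\pi_1 M \to \pi_1 N$ is an isomorphism, so the preimage of $M$ in $\tilde N$ is connected and may be identified with $\tilde M$. Convexity of $\tilde N$ in $\HH^3$ together with $\tilde M \subset \tilde N$ gives $\CH(\tilde M) \subset \tilde N$, and the hypothesis $N_8(\CH(M)) \subset N$ --- read in the exponential-map formulation preceding Theorem \ref{convexcomb} --- is equivalent to $N_8(\CH(\tilde M)) \subset \tilde N$ inside $\HH^3$.

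Next I would invoke the standard fact that the closed metric neighborhood of a closed convex subset of $\HH^3$ is closed and convex, so $U := N_8(\CH(\tilde M))$ is convex and simply connected. The group $\pi_1 M = \pi_1 N$ acts freely and properly discontinuously on $\tilde N \supset U$, preserves $\tilde M$ and hence $\CH(\tilde M)$ and its $8$-neighborhood $U$. Thus $U/\pi_1 M$ is a convex hyperbolic $3$-manifold whose universal cover is $U$ and whose fundamental group is $\pi_1 M$.

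Finally I would identify $U/\pi_1 M$ with the subset $N_8(\CH(M)) \subset N$. The covering $\tilde N \to N$ descends to an isometric immersion $U/\pi_1 M \to N$ whose image is the set of $p \in N$ admitting a lift $\tilde p \in \tilde N$ with $d_{\HH^3}(\tilde p, \CH(\tilde M)) \le 8$. Since $\pi_1 M = \pi_1 N$ preserves $\CH(\tilde M)$, the preimage of $\CH(M)$ in $\tilde N$ is exactly $\CH(\tilde M)$; and for any $\tilde p \in U$ the $\HH^3$-nearest-point geodesic to $\CH(\tilde M)$ stays inside the convex set $U \subset \tilde N$, so $d_{\tilde N}(\tilde p, \CH(\tilde M)) = d_{\HH^3}(\tilde p, \CH(\tilde M))$. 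Hence the image equals $N_8(\CH(M))$ and the map is a bijective isometry. The main delicate point is precisely this last distance identification --- ensuring that the intrinsic $8$-neighborhood computed downstairs in $N$ matches the image of the $\HH^3$-neighborhood upstairs --- and it is exactly where the hypothesis $N_8(\CH(M)) \subset N$ is used. With the identification in hand, the inclusions $M \subset \CH(M) \subset U/\pi_1 M$ together with the fundamental-group equality make $N_8(\CH(M))$ a convex thickening of $M$.
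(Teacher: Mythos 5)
Your proposal is correct: the lift-to-${\mathbb H}^3$ argument (identify $\tilde M\subset\tilde N\subset{\mathbb H}^3$, use convexity of the $8$-neighborhood of the convex set $\CH(\tilde M)$, quotient by $\pi_1M=\pi_1N$, and match the intrinsic $8$-neighborhood in $N$ with the image of the ${\mathbb H}^3$-neighborhood via the nearest-point geodesic) is exactly the standard argument intended here. The paper offers no proof of this lemma, dismissing it as routine, so your write-up simply supplies the details the authors left to the reader.
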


Recall: a group is {\em freely indecomposable} or {\em f.i.} if it is not the free product of two non-trivial groups.

\begin{proposition}\label{prefabboundary} If $\prefab$ is a prefabricated manifold, then $\partial \prefab$ is non-empty
and each component is a closed incompressible surface of genus at least $2$. 
Moreover no essential loop in $\partial \prefab$ is homotopic into a cusp of $\prefab$.
\end{proposition}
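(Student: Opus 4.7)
The plan is to describe $\partial\prefab$ piece by piece as a closed topological $2$-manifold, and then verify each assertion in turn. Outside the rank-2 cusps, $\partial\prefab$ contains the compact surfaces $S_i := \partial \Qcal_i \setminus \mathrm{int}(\Ccal)$, obtained by truncating the cusps of $\partial \Qcal_i$. Inside each horocusp $C \in \Ccal$, it contains a disk $D_C := \partial C \setminus \mathrm{int}(A_1(C) \cup A_2(C))$: by (P3), the cores $\alpha_1(C), \alpha_2(C)$ intersect once transversally on the torus $\partial C$, so the complement of the annular neighborhoods $A_1\cup A_2$ is a single topological disk. The pieces are identified along arcs in $\partial A_1 \cup \partial A_2$: the boundary circle of each $D_C$ traverses four such arcs (two on $\partial A_1$, two on $\partial A_2$), each identified with a boundary arc of one of the four $S$-surfaces incident to $C$ (the two sides of each $\partial \Qcal_j$). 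Non-emptiness is then immediate from the presence of the $S_i$.

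For the genus bound, I would compute Euler characteristics. Each $S$-surface is a truncated finite-area hyperbolic surface, so has $\chi < 0$, while each $D_C$ has $\chi = 1$. Since every $D_C$ is attached to four distinct $S$-surfaces, each connected component of $\partial \prefab$ contains enough QF boundary to ensure $\chi < 0$; since $\chi$ is even on a closed orientable surface, this gives genus $\geq 2$.

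For incompressibility and the no-parabolic assertion, I would invoke the graph-of-groups decomposition of $\pi_1\prefab$ along the horotori $\partial C$: vertex groups $\pi_1 \Qcal_i$ and $\pi_1 C = \ZZ^2$, edge groups the rank-$1$ cusp subgroups $\ZZ = \langle \alpha_i(C) \rangle$. The inclusion $\partial\prefab \hookrightarrow \prefab$ respects this decomposition: each $S_i \hookrightarrow \Qcal_i$ is $\pi_1$-injective (since $\partial \Qcal_i$ is $\pi_1$-injective in the QF manifold $\Qcal_i$), each $D_C$ has trivial $\pi_1$, and the peripheral subgroups of the $S_i$ at $C$ match the relevant edge groups. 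By the normal form theorem for graph-of-groups, $\pi_1 \partial\prefab \hookrightarrow \pi_1 \prefab$ is injective. An essential loop $\gamma \subset \partial\prefab$ homotopic in $\prefab$ into some $C$ would represent an element of $\pi_1 \prefab$ conjugate into the vertex group $\pi_1 C$; pulling back through the normal form, this element must arise from $\pi_1(\partial\prefab \cap C) = \pi_1 D_C = 1$, forcing $\gamma$ to be null-homotopic on $\partial \prefab$, contradicting essentiality.

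The main obstacle will be rigorously verifying the local topology of $\partial\prefab$ at the four ``corner'' points $\partial A_1 \cap \partial A_2$ per horocusp, where three surface patches from $\partial\Qcal_1$, $\partial\Qcal_2$, and $\partial C$ meet along three arcs. Checking that these are genuine manifold points of $\partial\prefab$ (equivalently, that the link of each such corner in $S_1 \cup S_2 \cup D_C$ is a circle) relies essentially on the transverse intersection condition (P3) and the no-bumping condition (P4), which are precisely what make the combinatorial gluing pattern well-defined.
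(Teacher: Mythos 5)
There is a genuine gap, and it is structural: your model of $\prefab$ omits the intersection $\Qcal_1\cap\Qcal_2$. In a prefabricated manifold the two QF pieces are not disjoint away from the cusps: condition (P4) constrains the components of $\Qcal_1\cap\Qcal_2$ (each must meet $\Ccal$) precisely because this intersection is a nonempty, generally $3$-dimensional collection of convex ``gluing regions'' (truncated ideal $3$-spiders, with possibly non-abelian fundamental group) --- indeed, already inside each horocusp $C$ the two rank-$1$ cusps overlap in a thorn, since their annuli $A_1(C),A_2(C)$ cross in $\partial C$. Consequently your first claim, that $\partial\prefab$ contains $S_i=\partial\Qcal_i\setminus\operatorname{int}(\Ccal)$, is false: the portions of $\partial\Qcal_1$ lying in $\operatorname{int}(\Qcal_2)$ (for instance near each corner of $D_C$, where $\partial A_1$ runs into $\operatorname{int}(A_2)$) are interior points of $\prefab$. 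The actual boundary is assembled from $\partial\Qcal_1\setminus\operatorname{int}(\Qcal_2\cup\Ccal)$, $\partial\Qcal_2\setminus\operatorname{int}(\Qcal_1\cup\Ccal)$ and the disks $D_C$, glued along curves of $\partial\Qcal_1\cap\partial\Qcal_2$ as well as along $\partial D_C$. This invalidates the Euler-characteristic bookkeeping as stated, and, more seriously, the graph-of-groups you use for $\pi_1$-injectivity and for excluding accidental parabolics: the decomposition of $\pi_1\prefab$ is not along the horotori with edge groups $\ZZ=\langle\alpha_i(C)\rangle$ only; the pieces $\Qcal_1$ and $\Qcal_2$ are also amalgamated/HNN-ed along the fundamental groups of the gluing regions (with their rank-$2$ cusps attached), which need not be cyclic or abelian. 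So the normal-form arguments in the third paragraph are applied to the wrong group, and the ``corner'' issue you flag at the end is not a local technicality but a symptom of the missing $3$-dimensional intersection locus.

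The paper sidesteps an explicit description of $\partial\prefab$ altogether. For incompressibility it shows $\pi_1\prefab$ is freely indecomposable: each piece $\Qcal_i\cup\Ccal$ is homotopy equivalent to a tubed surface (freely indecomposable, non-cyclic), and the pieces are glued along the $\pi_1$-injective gluing-regions-with-cusps, whose fundamental groups are nontrivial (they contain $\ZZ^2$); Kurosh's theorem then propagates free indecomposability through the resulting amalgamations and HNN extensions, and a compressible boundary component would force a nontrivial free splitting. For the absence of accidental parabolics it uses a homological argument: an essential annulus from $\partial\prefab$ to a horotorus $\partial C$ would give a curve $\beta\subset\partial C$ with nonzero intersection number with $\alpha_1(C)$ or $\alpha_2(C)$ (by (P2),(P3) these form a basis), whereas the homologous curve $\alpha\subset\partial\prefab$ is disjoint from the relevant surfaces, forcing that intersection number to vanish. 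If you want to salvage your approach, you would have to redo the boundary decomposition and the graph of groups with the gluing regions as edge pieces, at which point you are essentially reconstructing the paper's argument in a more laborious form.
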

\begin{proof} The boundary of $\prefab$ contains a non-empty subset of $\partial \Qcal_i$ so is not empty. 
 If $\partial \prefab$ is compressible then $\pi_1Z$ is the free product of two non-trivial groups. We now
 show it is not.
 
By Kurosh's theorem \cite{Ku}, the free product
of two f.i. groups, neither of which is cyclic, amalgamated along a non-trivial subgroup is f.i., as is an
HNN extension of a non-cyclic f.i. group along a non-trivial subgroup.

A  {\em tubed surface} \cite{BC1} is a 2-complex formed by gluing a torus onto each boundary component
of a compact surface, with nonempty incompressible boundary, 
so that each boundary component is glued onto an essential simple closed curve in a distinct torus.
The fundamental group of a tubed surface  is  f.i. (exercise for the reader) and not cyclic. 

The prefabricated manifold $Z = \cup_i Y_i$ where each $Y_i$ is homotopy equivalent to a tubed surface.
Each component, $X,$ of $\Qcal_1\cap\Qcal_2$ is convex thus $\pi_1$-injective. Each component  $R\subset Y_i\cap Y_j$
is  formed by adding rank-2 cusps to some such $X$, and is thus $\pi_1$-injective. Moreover $\pi_1(R)$  contains
a  ${\mathbb Z}^2$ subgroup, and is thus not trivial. The gluings result in HNN extensions and amalgamated free products.
 Hence $\pi_1Z$ is f.i.
 

Suppose there is
an essential annulus $A$ in $\prefab\setminus{\rm int}({\Ccal})$ with  boundary $\partial A=\alpha\sqcup\beta$ where
 $\alpha\subset \partial \prefab$ and  $\beta\subset \partial C$ for some horocusp $C\subset \Ccal$.
 By (P2)  $\Qcal_i \cap \partial C$ is an annulus and by (P3) the core curves $\alpha_1(C)$ and $\alpha_2(C)$ of these annuli
 have intersection number one.
  It follows that
$\beta$ has  intersection number $n\ne 0$ with at least one of these core curves. 
However $[\alpha]=[\beta]\in H_1(\prefab)$ and $n$  depends only on the 
homology class. Since $\alpha$ is disjoint from these surfaces, $n=0$, which
contradicts the existence of $A$.
\end{proof}

It follows from work of Bonahon and Thurston that:
\begin{theorem}\label{3kindssurfacegroup} 
Suppose $M$ is a complete hyperbolic $3$-manifold with finite volume. If $S$ is a  closed, 
orientable surface with $\chi(S)<0$ which is $\pi_1$-injectively
immersed in $M$ then either $S$ is a virtual fiber, or else $S$ is geometrically finite, in which case either it
is QF or some element of $\pi_1S$ is (an accidental) parabolic.
\end{theorem}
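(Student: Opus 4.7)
The plan is to pass to the covering space $p:N\to M$ associated to $\Gamma := f_*(\pi_1 S)$ and invoke the tameness results of Bonahon together with Thurston's covering theorem. Since $f$ is $\pi_1$-injective, $\pi_1 N\cong \pi_1 S$ is a finitely generated, freely indecomposable closed surface group, and $f$ lifts to a homotopy equivalence $\tilde f:S\to N$. Thus $N$ is a hyperbolic $3$-manifold homotopy equivalent to a closed surface of negative Euler characteristic.

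First I would apply Bonahon's theorem on ends of hyperbolic $3$-manifolds with freely indecomposable, finitely generated fundamental group. This gives that $N$ is topologically tame (homeomorphic to $S\times\mathbb R$), and that each of its two ends is \emph{geometrically tame}, meaning it is either geometrically finite or simply degenerate. This splits the analysis into two cases according to whether $\Gamma$ is geometrically finite as a Kleinian group, or at least one end of $N$ is simply degenerate.

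In the geometrically infinite case, Thurston's covering theorem applies to the pair $p:N\to M$. Because $M$ has finite volume, its ends are rank-$\le 2$ cusps, and every component of $p^{-1}$ of such a cusp is a geometrically finite parabolic end of $N$. Hence a simply degenerate end of $N$ cannot lie in $p^{-1}(\text{cusps of }M)$, and the covering theorem forces $p$ to be finite-to-one on a neighborhood of that end; using the finite-volume assumption on $M$, this promotes to a finite covering $M'\to M$ in which $\tilde f(S)$ is the fiber of a fibration $M'\to S^1$. Thus $S$ is a virtual fiber of $M$.

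In the geometrically finite case, $\Gamma$ is a geometrically finite Kleinian surface group with two geometrically finite ends, so the domain of discontinuity on $S^2_\infty$ has two invariant disk components and $\Lambda(\Gamma)$ is a topological circle. If $\Gamma$ contains no parabolic elements it is convex cocompact; since the quotient is then bilipschitz to a closed Fuchsian $3$-manifold, $\Gamma$ is QF in the sense of the paper. Otherwise $\Gamma$ contains a parabolic element, which, since $S$ is closed and hence has no peripheral loops to account for it, is by definition an accidental parabolic. The main delicate point is the precise invocation of Thurston's covering theorem in the degenerate case and the exclusion of a simply degenerate end covering a cusp of $M$; both use the finite-volume hypothesis on $M$ in an essential way.
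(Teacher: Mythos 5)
The paper gives no proof of this statement at all---it is asserted as ``following from work of Bonahon and Thurston''---and your argument (pass to the cover $N={\mathbb H}^3/f_*(\pi_1S)$, invoke Bonahon's tameness/geometric tameness for freely indecomposable groups, use the Thurston--Canary covering theorem together with finite volume of $M$ in the degenerate case, and use convex cocompact $\Rightarrow$ quasi-Fuchsian in the geometrically finite case without parabolics) is exactly the intended route, so this is essentially the same approach. Two small corrections to your wording: the covering theorem's dichotomy runs the other way from what you wrote---the ``finite-to-one on a neighborhood of the degenerate end'' alternative is the one that the finite-volume hypothesis \emph{excludes} (such an end neighborhood has infinite volume), and it is the other branch of the theorem that directly supplies the finite cover $M'\to M$ fibering over $S^1$ with $S$ a virtual fiber; also, in the geometrically finite case the limit set is a Jordan curve with two invariant disk components of the domain of discontinuity only when there are no accidental parabolics, though you in fact only use that description in that subcase.
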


 \section{Coverings of surfaces containing immersed subsurfaces}
 
A {\em spider pattern} (\ref{spiderdata}) consists
of a pair of surfaces (possibly not connected) each equipped with
various immersed surfaces that are identified in pairs, and is used later to model
 how QF 3-manifolds intersect.
The main result of this section is  (\ref{spiderpatterntheorem}) which asserts the existence
of a finite cover of a spider pattern with certain properties. 
This follows easily
from (\ref{spidertheorem}) whose proof  occupies the bulk of this section. 

A path in a surface $F$ with endpoints in $\partial F$ is {\em essential} 
if it is not homotopic rel endpoints into the boundary of $F$.
A loop in $F$ is {\em peripheral} if it is freely homotopic into $\partial F$. 
A function $f:X\longrightarrow Y$ between metric spaces is a {\em local isometry} if $X$ has an open
cover such that the restriction of $f$ to each set in the open cover is an isometry onto its image.

\begin{definition} An {\em immersed spider} is $(F,X,f)$ where $F$ and $X$ are compact, convex, hyperbolic surfaces, 
and $f:X\longrightarrow F$ is a local isometry and 
\begin{enumerate}
\item[(I1)] Each component of  $f^{-1}(\partial F)$ is an arc (called a {\em foot} of the spider). 
 \item[(I2)] $X$ has at least 2 feet.
\item[(I3)] If $\gamma$ is an essential  loop in $X$ then $f\circ\gamma$ is
 not peripheral in $F$.
\item[(I4)] If $\gamma$ is an arc  in $X$ with endpoints on distinct feet then $f\circ\gamma$ is essential in $F$.\end{enumerate} 
\end{definition}
 Clearly $f^{-1}(\partial F)\subset\partial X$. A spider is called {\em degenerate} if $X$ is a disc with exactly two feet. 
  If $f$ is injective we identify $X$ with $f(X)$ and
   regard the spider as the {\em subsurface} $X\subset F$ and refer to $(F,X)$, or sometimes $X$ as an {\em (embedded) spider}.
 
 A  spider $X$ can be decomposed as $X=B\cup \Lcal$ where $\Lcal$ is a regular neighborhood of the feet of $X$ and $B$ is the
closure of $X\setminus \Lcal$ and is called the {\em spider body}. Each component $L$ of $\Lcal$ is a rectangle
 called a {\em leg} of the spider and contains a spider foot in the boundary.

\begin{definition}\label{immersedspiderdef} An  {\em immersed spider surface} is $\Scal=(\Fcal,\Xcal,f:\Xcal\longrightarrow \Fcal)$ such that
\begin{enumerate}
\item[(S1)] Each component of $\Fcal$ and $\Xcal$ is a compact, convex, hyperbolic surface.
\item[(S2)]  If $X\subset \Xcal$ and $F\subset\Fcal$ are components with $f(X)\subset F$ then $(F,X,f|X)$ is an immersed spider. 
\item[(S3)] ({\em Ample spiders}) $f^{-1}(C)\ne\phi$ for each component $C\subset\partial \Fcal$. 
\end{enumerate}
\end{definition}

If $f$ is injective we regard $\Xcal$ as a subset of $\Fcal$ then $(\Fcal,\Xcal)$ is called an {\em (embedded) spider surface}.
We say $\Scal$ is {\em connected} if $\Fcal$ is connected, and in this case $\Fcal$ will often be denoted by $F$.
The condition (S3) says each boundary component of $\Fcal$ 
contains the foot of some spider. 

\begin{definition} An embedded spider surface $(\Fcal,\Xcal)$ is called  {\em simple} if
$C\cap\Xcal$ is connected for each boundary component $C\subset\partial\Fcal$. 
\end{definition}
This means each boundary component of $\Fcal$ contains exactly one spider foot. 

\begin{definition}\label{spidercoverdef} Suppose $\Scal = (\Fcal,\Xcal,f)$ and $\tilde{\Scal} = (\tilde{\Fcal},\tilde{\Xcal},\tilde f)$ are immersed spider
surfaces.
A  {\em spider cover} $(p,q):\tilde\Scal\longrightarrow\Scal$   of {\em spider degree $d$} consists of covering space maps 
$p: \tilde{\Fcal} \rightarrow \Fcal$ and $q:\tilde{\Xcal}\longrightarrow \Xcal$ 
such that  $\tilde\Xcal$ is the disjoint union of $d$ copies of $\Xcal$ and $q$ is the natural projection,
 and the following diagram commutes
 $\quad\begin{CD}
\tilde\Xcal  @>\tilde f>> \tilde\Fcal\\
@VVqV  @ VVpV\\
\Xcal   @>f>> \Fcal
\end{CD}$

 \end{definition}

 The pair $(p,q)$ is called a {\em spider covering map}. For each component $X\subset\Xcal$
 the components of 
$q^{-1}(X)$ are called the {\em lifts} of $X$. 
Observe that if $f$ and $\tilde f$ are both injective then, after identifying the spiders with subsurfaces
of $\tilde\Fcal$ and $\Fcal$, we have  $q=p|\tilde{\Xcal}$. Since $\Fcal$ need not be connected,
$p$ might not have a well defined degree.
 It is important to check the condition (S3) {\em ample spiders} is
satisfied when
constructing spider covers. 

\begin{theorem}[spider theorem]\label{spidertheorem}
Suppose $\Scal=(F,\Xcal,f)$ is a connected, immersed, spider surface. 
Then there is  a connected, simple, embedded spider surface $\tilde\Scal=(\tilde F,\tilde\Xcal)$  which spider 
covers $\Scal$ 
and $\tilde{F}\setminus\tilde\Xcal$ is connected and $|\partial \tilde F|$ is even.
\end{theorem}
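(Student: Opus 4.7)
The plan is to construct $\tilde F$ as a finite cover of $F$ and then select specific lifts of the spiders in $\Xcal$ to form $\tilde\Xcal$. The key tool throughout is Scott's subgroup separability theorem (LERF) for surface groups.

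First I would use LERF to reduce to the embedded case. For each component $X\subset\Xcal$, the map $f|_X:X\to F$ is a local isometry of convex hyperbolic surfaces, hence $\pi_1$-injective, and its image is a finitely generated, separable subgroup of $\pi_1 F$. Intersecting the finite-index subgroups provided by separability for the finitely many components of $\Xcal$, I obtain a finite connected cover $F_1\to F$ in which every component of $\Xcal$ lifts to an embedded convex subsurface. After this reduction I may assume $\Scal$ is already embedded in $F_1$.

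Next I would pass to a further finite cover to separate feet on a common boundary component. For each embedded spider $X\subset F_1$ and each ordered pair of distinct feet $\phi,\phi'$ of $X$ lying on a common component $C\subset\partial F_1$, form the loop $\alpha_{\phi,\phi'}\in\pi_1 F_1$ by concatenating an arc $\gamma\subset X$ from a point of $\phi$ to a point of $\phi'$ with a subarc $\beta\subset C$ from $\phi'$ back to $\phi$. Condition (I4) forces $\gamma$ to be essential in $F_1$, so $\alpha_{\phi,\phi'}\notin\pi_1 X$. Analogous loops can be defined for feet belonging to distinct spiders but lying on a common $C$. The collection of such loops is finite, and Scott separability applied to each $\pi_1 X$ together with exclusion of the $\alpha_{\phi,\phi'}$ yields a finite regular cover $F_2\to F_1$ whose fundamental group contains each $\pi_1 X$ but none of the $\alpha_{\phi,\phi'}$. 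Hence in $F_2$ every lift of every spider has its feet on pairwise distinct preimage circles of $\partial F_1$.

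Finally, let $\Delta$ denote the deck group of $F_2\to F_1$; this group acts transitively on the set of lifts of each spider and on the set of preimage circles of each $C$. A $\Delta$-equivariant counting argument then identifies a spider degree $d$ and a selection of $d$ lifts of each component of $\Xcal$ whose union meets each boundary component of $\tilde F:=F_2$ in exactly one foot, yielding simplicity while preserving ampleness (S3). To achieve connectedness of $\tilde F\setminus\tilde\Xcal$ and evenness of $|\partial\tilde F|$, I would pass to an additional $\ZZ/m$-cover with $\Xcal$ contained in the kernel: for large enough $m$ such a cyclic cover fuses complementary components of $\tilde F\setminus\tilde\Xcal$ into one, and a final $\ZZ/2$-cover can be taken if needed to adjust parity.

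The main obstacle will be the third step: realizing the combinatorial selection and simultaneously ensuring connectedness of the complement and evenness of $|\partial\tilde F|$ within a single cover. The first two steps are essentially standard applications of LERF, but the selection of lifts via the transitive $\Delta$-action combined with the separation data from the second step is where the delicate combinatorics lies.
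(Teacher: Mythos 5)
Your outline (embed via separability, then choose lifts in further covers to get one foot per boundary circle) points in the right general direction, but each of your three steps has a genuine gap. In step 1, intersecting the finite-index subgroups furnished by LERF for the various components of $\Xcal$ does not work: the intersection need not contain any of the subgroups $f_*(\pi_1X)$, so the spiders need not lift at all; and even in a cover where each spider separately lifts to an embedding, the chosen lifts of \emph{distinct} spiders can intersect, whereas an embedded spider surface requires them pairwise disjoint. The paper avoids this by first band-summing all the spiders into a single immersed convex spider (\ref{bandsumspiders}), ringing its legs (\ref{ringedspiders}), and then applying not plain LERF but the conservative separability theorem of \cite{BC2} (\ref{convexlift}), which simultaneously gives an embedding, a \emph{conservative} cover, a connected complement, and exclusion of a finite set of short loops --- all of which are needed later. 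In step 2, excluding the finitely many loops $\alpha_{\phi,\phi'}$ does not force the two feet onto distinct boundary circles of the cover: if they lie on the same circle they are joined by \emph{some} boundary arc, which differs from your chosen $\beta$ by powers of the boundary class, so you would have to exclude infinitely many elements (a double-coset/peripheral separability statement, not finite-set separability). The paper obtains the analogous condition --- no spider with two feet on one boundary circle, and more (girth of the spider graph $>4$) --- via $\ZZ/2$-homology covers relative to the spiders together with a graph-covering argument (\ref{addendum}), (\ref{spidergraphcover}).

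The most serious gap is step 3, which is exactly where the paper's real work lies and where your proposal offers no mechanism. A counting obstruction shows that a single regular cover plus a choice of lifts cannot in general yield simplicity: if a boundary component $C$ of the base carries $k_C$ spider feet and has $b_C$ preimage circles, simplicity of a spider cover of spider degree $d$ forces $b_C=d\,k_C$ for \emph{every} $C$ simultaneously, and in an arbitrary regular cover the numbers $b_C$ are dictated by the peripheral degrees and will not satisfy this. The paper instead runs an induction on the \emph{excess} number of feet, reducing it by $2$ at each stage using tailor-made degree-$2$ and degree-$3$ covers dual to systems of arcs disjoint from the spiders; the availability of such arcs uses connectedness of the complement and $|\partial F|\ge 4$ even, and the girth condition guarantees enough distinct spiders on each boundary circle to choose lifts preserving ampleness (S3). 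Finally, your proposed fixes at the end --- a $\ZZ/m$-cover to connect the complement and a $\ZZ/2$-cover to adjust parity --- are applied \emph{after} simplicity and would in general destroy it (boundary circles multiply while the selected feet do not), whereas in the paper evenness of $|\partial\tilde F|$ is built into the induction and connectedness of the complement is restored by conservative covers of spider degree $1$ (\ref{manyspiders}) that leave the spider graph, hence simplicity, unchanged (\ref{spidergraphcover})(G1).
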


\begin{proof} By (\ref{manyspiders}) there is a spider cover which is an {\em embedded} spider surface.
By (\ref{simplespiders}) there is a further cover by a  {\em simple} spider surface with the required properties.\end{proof}

\begin{definition}\label{spiderdata} An {\em immersed spider pattern} is 
 $P = (\Scal_1,\Scal_2, \tau)$ where 
 $\Scal_i=(\Fcal_i,\Xcal_i,f_i)$ is an immersed spider surface and $\tau : \Xcal_1\longrightarrow  \Xcal_2$ is a map
 called  {\em the pairing} that induces a bijection between components.
\end{definition}

In later sections the pairing models how QF 3-manifolds are glued along submanifolds.
If $f_1$ and $f_2$ are both injective we omit them from the notation and refer to an 
{\em embedded spider pattern} or just {\em spider pattern}.  

\begin{definition}\label{spiderpattern}
A spider pattern $\tilde  P = (\tilde{\Scal}_1,\tilde{\Scal}_2, \tilde \tau)$ 
{\em covers} an immersed spider pattern
$P = (\Scal_1,\Scal_1 ,\tau)$ if there are spider covers
$(p_i,q_i):\tilde\Scal_i\longrightarrow \Scal_i$ which are {\em compatible with the pairings} in the sense that
$q_2\circ \tilde\tau=\tau\circ q_1$. 
\end{definition}

Given an immersed spider surface $(\Fcal,\Xcal,f)$  each connected component  $F\subset \Fcal$ 
determines an immersed spider surface called a {\em component spider surface}
$\Scal_F=(F,\Xcal_F,f|\Xcal_F)$ where
$\Xcal_F = f^{-1}(F)$. A spider pattern is {\em simple} if every component spider surface is simple.

Given $\delta>0$ an immersed spider $(F,X^{\delta},f^{\delta})$ is a {\em $\delta$-thickening} of another immersed spider $(F,X,f)$
if $X\subset X^{\delta}$, and $f^{\delta}$ is an extension of $f$, and taking appropriate lifts
to universal covers $\tilde f^{\delta}(\tilde X^{\delta})$ contains a $\delta$-neighborhood of $\tilde f(\tilde X)$
in $\tilde F$. An immersed spider pattern $P^{\delta}$ is a {\em $\delta$-thickening} of another immersed spider pattern $P$
if all the component spider surfaces of $P^{\delta}$ are $\delta$-thickenings of those of $P$. It is routine to check
that $\delta$-thickenings always exist.
The main result of this section is:
\begin{theorem}[Spider pattern theorem]\label{spiderpatterntheorem} Given
 an immersed spider pattern $P$ there is $d>0$ such that for all $\delta>0$ there 
 is a simple embedded spider pattern $\tilde P^{\delta}$ that spider
 covers $P^{\delta}$ with spider degree $d$. 
\end{theorem}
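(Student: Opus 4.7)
The plan is to reduce to the spider theorem (\ref{spidertheorem}) applied component-wise, combine the resulting covers with a common spider degree, and lift the pairing $\tau$ compatibly. First, decompose each immersed spider surface $\Scal_i$ ($i=1,2$) into its component spider surfaces $\{\Scal_F\}$ indexed by the connected components $F\subset\Fcal_i$. Since $\delta$-thickening is defined component-wise, $P^{\delta}$ inherits an analogous decomposition into component spider surfaces $\Scal_F^{\delta}$. Applying the spider theorem to each $\Scal_F^{\delta}$ gives a connected, simple, embedded spider cover $\tilde\Scal_F^{\delta}\to \Scal_F^{\delta}$ of some spider degree $d_F$.

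Let $d$ be a common multiple of the $\{d_F\}$, chosen independently of $\delta$ (the main obstacle, addressed below). Replace each cover by $d/d_F$ disjoint copies of itself, then assemble over the components to obtain a spider cover $\tilde\Scal_i^{\delta}\to \Scal_i^{\delta}$ of uniform spider degree $d$; each such cover remains simple and embedded because these properties hold component-wise and are preserved under disjoint union. The pairing lifts naturally: each component $X\subset\Xcal_1$ and its image $\tau(X)\subset\Xcal_2$ are covered by exactly $d$ disjoint copies in $\tilde\Xcal_1^{\delta}$ and $\tilde\Xcal_2^{\delta}$ respectively. Choosing any bijection between these two $d$-element sets of copies and extending via $\tau$ on each matched pair defines $\tilde\tau:\tilde\Xcal_1^{\delta}\to\tilde\Xcal_2^{\delta}$ with $q_2\circ\tilde\tau=\tau\circ q_1$ by construction, so that $\tilde P^{\delta}=(\tilde\Scal_1^{\delta},\tilde\Scal_2^{\delta},\tilde\tau)$ is the desired simple embedded spider pattern cover.

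The main obstacle is arranging $d$ to be independent of $\delta$. Since the spider theorem is proved via the lemmas (\ref{manyspiders}) and (\ref{simplespiders}), which rely on subgroup separability of surface groups, the spider degrees it produces should reflect only the combinatorial and group-theoretic data of the immersions $f_i$ rather than the metric thickness. A thickened spider $X^{\delta}$ deformation retracts to $X$ and represents the same conjugacy class of subgroups in $\pi_1 F$, so for each $\delta$ one can pass to a sufficiently deep finite cover of $F$ where $d$ disjoint copies of the thickened spider $X^{\delta}$ embed. The covering $\tilde F^{\delta}\to F$ grows with $\delta$, but the spider degree, counting only the number of lifts per spider, stays fixed at $d$. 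Verifying this uniformity in $\delta$ is the one step that requires looking into the proof of the spider theorem rather than invoking it as a black box.
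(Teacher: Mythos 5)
Your first half---applying (\ref{spidertheorem}) to each component spider surface, passing to the least common multiple $d$ of the component degrees, duplicating copies, and lifting $\tau$ by matching the $d$ lifts of $X$ with the $d$ lifts of $\tau(X)$---is exactly the paper's opening step. The gap is the step you yourself flag and then only gesture at: you apply the spider theorem to the \emph{thickened} component surfaces $\Scal_F^{\delta}$, so your degrees are $d_F=d_F(\delta)$, whereas the theorem (and its use in (\ref{makeprefab}), where $\kappa$ and hence $\delta$ are chosen \emph{after} $d$) requires $d$ to be fixed before $\delta$. Your proposed repair does not close this. The claim that the degree produced by (\ref{spidertheorem}) ``reflects only combinatorial and group-theoretic data'' is not what its proof delivers: the embedding covers in (\ref{manyspiders}) come from (\ref{convexlift}), whose degree is controlled by separability applied to the set of loops of length at most $2\diam(Y)$ for $Y$ the banded spider, hence grows with $\delta$; and the spider degree in (\ref{simplespiders}) comes from the type (B) homology covers, whose degree is $2^m$ with $m$ determined by the Betti numbers of the surfaces produced by the preceding (metric-dependent, large-degree) type (A) covers. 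So following the proof naively, $d_F(\delta)$ can genuinely grow with $\delta$. Moreover ``passing to a sufficiently deep cover in which $d$ disjoint copies of $X^{\delta}$ embed'' only addresses embeddedness; it does not produce simplicity (exactly one spider foot per boundary component) at a controlled spider degree, which is the actual content of the statement.

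The missing idea, and the paper's device, is to reverse the roles of covering and thickening. First apply (\ref{spidertheorem}) to the \emph{unthickened} pattern $P$ to obtain a simple embedded spider cover $\tilde P$ of spider degree $d$; this fixes $d$ with no reference to $\delta$, and reduces the theorem to the case where $P$ is already a simple embedded spider pattern. For such a $P$, the spiders of $P^{\delta}$ are merely immersed and may overlap, but the argument in the first paragraph of the proof of (\ref{manyspiders}) (band-sum the thickened spiders and apply (\ref{convexlift})) gives a \emph{conservative} cover of each component surface in which they lift to pairwise disjoint embeddings. This is a spider cover of spider degree $1$, so it leaves the spider graph, and hence ampleness and simplicity, unchanged, and the composite cover still has spider degree $d$. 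All of the $\delta$-dependent covering degree is absorbed into this spider-degree-one conservative cover; that is how uniformity of $d$ in $\delta$ is actually achieved, and any rescue of your route would in effect have to prove this reduction.
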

\begin{proof} By (\ref{spidertheorem}) for each component $F$ of $\Fcal_i$  there 
is a simple spider surface $\tilde\Scal(F)=(\tilde{F},\tilde\Xcal)$ which spider covers the component
immersed spider surface $\Scal_F$ given by $F$
with some spider degree  $d(F)>0$. Moreover $\tilde F\setminus\tilde\Xcal$ is connected and 
$|\partial \tilde F|$ is even.
 Let $d$ be the lowest common multiple of all the $d(F)$ for $F$ a component of $\Fcal_1\sqcup\Fcal_2$. Define $(\tilde{\Fcal}_i,\tilde\Xcal_i)$ to be the disjoint union of $d/d(F)$ 
copies of $\Scal(F)$ as $F$ ranges
over components of $\Fcal_i$. 
This determines  a spider pattern $\tilde P$  except for the pairing $\tilde\tau$. There are obvious
covering space projections to $P$.
Since every spider in $\Xcal_i$ has the same number, $d$, of lifts to $\tilde\Xcal_i$ 
 there is a  pairing $\tilde{\tau}$ of the spiders in $\tilde{P}$ that covers the pairing $\tau$.  
 It only remains to arrange the condition on $\delta$.
 After replacing $P$ by $\tilde P$ it suffices to prove the theorem in the case $P$ is a simple embedded spider pattern.
 
Given a simple embedded spider pattern $P$ there is $\delta$-thickening $P^{\delta}$ consisting of immersed spiders.
We show there is a simple embedded spider cover $\tilde P^{\delta}$ of $P^{\delta}$ 
with spider degree $1$ which is a conservative cover of each component surface. 
The spiders in $P^{\delta}$ are immersed, and might intersect.  The argument in the
first paragraph of the proof of (\ref{manyspiders})  shows there is a  conservative cover
of each component surface, $F$ of $P^{\delta}$, and pairwise disjoint embeddings of these thickened spiders. Doing this for each
$F$ gives a spider cover $\tilde P^{\delta}$ of spider degree $1$.

\end{proof}

We turn now to the proof of (\ref{manyspiders}).
A finite sheeted covering space $\tilde{F}$ of  a compact surface $F$ is {\em conservative} 
if $|\partial\tilde F|=|\partial F|$.
A map $f:S\longrightarrow F$ is a {\em virtual embedding} if there is a finite cover $p:\tilde{F}\longrightarrow F$
and a lift $\tilde{f}:S\longrightarrow \tilde{F}$ which is an embedding. At various times
we wish to lift an immersed surface to a finite cover so it is embedded and does not separate.

\begin{proposition}\label{convexlift} Suppose $F$ and $Y$ are
 two compact,  convex hyperbolic  surfaces.
 Suppose $f:Y\longrightarrow \int(F)$ is a local isometry 
  and $f_*(\pi_1Y)$ contains no peripheral element of $\pi_1F$. Then there is a
conservative cover $\tilde{F}$ of $F$ such that $f$ lifts to an embedding $\tilde{f}:Y
\longrightarrow\tilde{F}$
 and $\tilde{F}\setminus \tilde{f}(Y)$
is connected.
\end{proposition}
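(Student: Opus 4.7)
My plan is to apply Scott's theorem that a finitely generated surface group is LERF (subgroup separable), together with three successive passes to finite covers: the first to embed $Y$, the second to make the cover conservative over $F$, and the third to connect the complement. The hypothesis that $f_*(\pi_1 Y)$ contains no peripheral element of $\pi_1 F$ translates into $\langle \gamma \rangle \cap f_*(\pi_1 Y) = 1$ for every peripheral element $\gamma \in \pi_1 F$, and this separation is what provides the flexibility to independently control boundary behavior and the embedding of $Y$ when constructing finite quotients.

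First, Scott's theorem applied to the finitely generated subgroup $f_*(\pi_1 Y) \le \pi_1 F$ yields a finite cover $p_0 : F_0 \to F$ to which $f$ lifts as an embedding $f_0 : Y \hookrightarrow F_0$; since $f(Y) \subset \int F$, the image $f_0(Y)$ is disjoint from $\partial F_0$. Second, I would pass to a further cover $p_1 : F_1 \to F$, factoring through $F_0$, that is conservative. For each boundary component $C \subset \partial F$ with peripheral generator $\gamma_C \in \pi_1 F$, the components of $p_0^{-1}(C)$ correspond to $\gamma_C$-orbits on the coset space $\pi_1 F / \pi_1 F_0$; using LERF together with $\langle \gamma_C \rangle \cap f_*\pi_1 Y = 1$, I can find a homomorphism of $\pi_1 F$ onto a finite group that is injective on $f_*\pi_1 Y$ and sends each $\gamma_C$ to an element acting as a single cycle on the resulting coset space. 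Intersecting the kernels of such homomorphisms with $\pi_1 F_0$ gives $\pi_1 F_1$, so $F_1 \to F$ is conservative and, because $\pi_1 Y \subset \pi_1 F_1$, the embedded lift of $Y$ persists.

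Third, I take a further finite cover $\tilde F \to F_1$ with $\pi_1 Y \subset \pi_1 \tilde F$, yielding a canonical embedded lift $\tilde f : Y \hookrightarrow \tilde F$. The components of $\tilde F \setminus \tilde f(Y)$ are determined by orbits of $\pi_1 \tilde F$ on the fiber of $\tilde F \to F_1$ restricted to $F_1 \setminus Y$, together with the other lifts of $Y$; by choosing $\tilde F$ to correspond to an abelian quotient of $\pi_1 F_1$ that distinguishes the components of $F_1 \setminus Y$ but vanishes on $\pi_1 Y$ (again by LERF), these components merge into a single complementary region, while the conservativeness and embedding arranged in step two are preserved. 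The main obstacle is coordinating the three requirements simultaneously so that each successive cover preserves the properties already established; the LERF property combined with the no-peripheral hypothesis at every stage is what makes this juggling possible.
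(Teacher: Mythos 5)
There is a genuine gap, and it sits exactly where the real difficulty of the proposition lies: making the cover \emph{conservative} while simultaneously keeping the embedded lift and a connected complement. Your second step claims that intersecting the kernels of suitable finite quotients with $\pi_1F_0$ yields a conservative cover $F_1$. This fails: conservativeness means each peripheral element $\gamma_C$ has a \emph{single} orbit on the coset space of the subgroup, and this property is not preserved under intersecting subgroups (nor under passing to further covers). If $\gamma_C$ acts as a single cycle on the cosets of $K$, it will in general have many orbits on the cosets of $K\cap \pi_1F_0$, so the cover you build need not be conservative at all. The same instability undoes your third step: the further (abelian) cover chosen to connect the complement will in general split boundary circles again, so the assertion that ``the conservativeness and embedding arranged in step two are preserved'' is unjustified and false in general. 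There is also a smaller but symptomatic slip: no homomorphism onto a finite group can be ``injective on $f_*\pi_1Y$'' when that subgroup is infinite; plain LERF only separates finite sets of elements and gives no control over the cycle structure of the peripheral classes.

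What is missing is precisely the tool the paper uses: the conservative separability theorem of Baker--Cooper \cite{BC2} (a strengthening of Scott's LERF whose point is that all the requirements can be met by \emph{one} cover). The paper's proof applies that theorem once, to $H=f_*\pi_1Y$ together with the finite set $B$ of non-$H$ elements represented by loops of length at most $2\diam(Y)$, obtaining a single conservative cover $\tilde F$ in which $H$ is carried by an embedded non-separating subsurface and $p_*\pi_1\tilde F$ misses $B$; convexity gives injectivity of the lift of $f$ into the $H$-cover, and the short-loop argument then forces the lift $\tilde f:Y\to\tilde F$ to be injective, with connectedness of $\tilde F\setminus\tilde f(Y)$ and conservativeness coming directly from the cited theorem. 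The hypothesis that $f_*\pi_1Y$ contains no peripheral element is exactly what that theorem needs; your proposal invokes the hypothesis rhetorically but supplies no mechanism that replaces \cite{BC2}, so the three properties are never achieved simultaneously.
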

\begin{proof}  Choose a basepoint $y\in Y$ and use $x=f(y)$ as the base point for $F$. 
Define $H= f_*(\pi_1(Y,y))\subset\pi_1(F,x)$ and
let $p_Y:\tilde{F}_Y\longrightarrow F$ be the cover corresponding
 to $H$. The map $f$ is $\pi_1$-injective so 
 it lifts to a homotopy equivalence $\tilde{f}_Y:Y\longrightarrow\tilde{F}_Y$.
Since $Y$ and $F$ are  convex the developing map embeds the universal covers 
$\tilde{Y}\subset\tilde{F}\subset{\mathbb H}^2$. But $Y$ and $\tilde{F}_Y$ 
are the quotient of their universal covers by $H$
 and it follows that 
 $\tilde{f}_Y$ is injective. 
   
 Let $B\subset\pi_1(F,x)\setminus H$ be the set represented by loops based at $x$ of length at most $2\diam(Y)$.
 Then $B$ is finite.
 By the conservative separability theorem \cite{BC2}, there is a conservative cover $p:\tilde{F}\longrightarrow F$ and basepoint
 $\tilde{x}\in\tilde{F}$ covering $x$ with the following properties
\begin{itemize} 
\item[(i)]  There a compact connected $\pi_1$-injective subsurface $S\subset\tilde{F}$ with $p_*(\pi_1(S,\tilde{x}))=H$.
\item[(ii)] $p_*(\pi_1(\tilde{F},\tilde{x}))$ contains no element of $B$.
\item[(iii)] $\tilde{F}\setminus S$ is connected 
\item[(iv)] The covering is conservative.
\end{itemize}
The existence of $S$ implies  $f$ lifts to $\tilde{f}:Y\longrightarrow \tilde{F}$ with $\tilde{f}(y)=\tilde{x}$ and 
we claim $\tilde{f}$ is injective.

 Suppose $\tilde{f}(a)=\tilde{f}(b)$. In $Y$ 
 there are paths 
 $\alpha$ starting at $y$ and ending at $a$, and  $\beta$  
 starting at $b$ and ending at $y$ both of length at most $\diam(Y)$. 
  This gives two paths $\tilde{\alpha}=\tilde{f}\circ\alpha$ and $\tilde{\beta}=\tilde{f}\circ\beta$ in $\tilde{F}$.
 Then $\tilde{\alpha}\cdot\tilde{\beta}$ is a loop in $\tilde{F}$ based at $\tilde{x}$ and going through $\tilde{f}(a)$. 
 It projects to a loop $\gamma$ in $F$ based at $x$
 of length at most $2\diam(Y)$, so $[\gamma]\in H$.
  Hence $\gamma$ lifts to a loop  $\tilde{\gamma}_Y$ in $\tilde{F}_Y$ based at $\tilde{f}_Y(y)$.
 Since $\tilde{f}_Y$ is injective and covers $\tilde f$ this implies $a=b$ so $\tilde{f}$ is injective as asserted.
 
 It follows that $\tilde{f}(Y)$ is a regular neighborhood of convex core of $S$, and the remaining claims follow from (iii) and (iv).\end{proof}

\begin{lemma}\label{bandsumspiders}  Suppose $\Scal_1=(F,X_1,f_1)$ and $\Scal_2=(F,X_2,f_2)$ are immersed spiders. 
Then there is an immersed spider
$(F,X,f)$ called a {\em band sum of $\Scal_1$ and $\Scal_2$} such that
 $X$ is the union of regular neighborhoods of $X_1$ and $X_2$ which intersect along an arc.
Moreover $f|X_i=f_i$ and each foot of $X$ contains exactly one foot of $X_1\sqcup X_2$.
\end{lemma}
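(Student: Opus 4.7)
The plan is to construct $X$ as a topological band sum of $X_1$ and $X_2$, equip it with a compatible hyperbolic metric, and verify the spider axioms. First, for $i=1,2$, I would choose a short arc $a_i \subset \partial X_i$ disjoint from the feet of $X_i$; since the body of $X_i$ has boundary meeting $\partial X_i$ outside the legs that contain the feet, such an arc exists. Next, I would pick a short smooth path $\gamma \subset \int(F)$ joining the midpoints of $f_1(a_1)$ and $f_2(a_2)$ in general position, avoiding the images of all feet. Topologically, $X$ would be formed as the pushout $X_1 \sqcup B \sqcup X_2 /{\sim}$, where $B = [0,1] \times [-\delta,\delta]$ is a rectangle glued via $\{0\} \times [-\delta,\delta] \cong a_1$ and $\{1\} \times [-\delta,\delta] \cong a_2$. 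Taking $N(X_i) \subset X$ to be $X_i$ together with a collar of $a_i$ extending into $B$ past the midpoint, the two neighborhoods overlap in a middle sub-rectangle of $B$, yielding the required arc-like intersection.

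Next I would put a hyperbolic metric on $X$ by pulling back a thin normal-exponential tubular neighborhood of $\gamma$ in $F$ to $B$; for $\delta$ small enough this tube map is a local isometry. The metric glues smoothly to $X_i$ along $a_i$, yielding a hyperbolic surface $X$ with natural local isometry $f: X \to F$ extending $f_1, f_2$. Since $\gamma \subset \int(F)$ and $a_i$ misses the feet of $X_i$, the set $f^{-1}(\partial F)$ would be exactly the feet of $X_1 \sqcup X_2$, establishing (I1), (I2) and the foot-preservation claim. For convexity, I would lift to ${\mathbb H}^2$: pick disjoint lifts $\tilde X_1, \tilde X_2$ together with a short geodesic $\tilde\gamma$ lifting $\gamma$, and verify that $\tilde X_1 \cup \tilde B \cup \tilde X_2$ (together with its $\pi_1 X$-orbit) forms a convex subset of ${\mathbb H}^2$. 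This would realize the universal cover of $X$ convexly, so $X$ is convex, $f$ is $\pi_1$-injective, and $\pi_1 X = \pi_1 X_1 * \pi_1 X_2$ embeds as a free product in $\pi_1 F$.

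Axioms (I3), (I4) would then follow from this free product structure: an essential loop or arc in $X$ is a reduced word $g_1 \cdots g_n \in \pi_1 X_1 * \pi_1 X_2$. For $n=1$ its image under $f_*$ is non-peripheral or essential by the corresponding axiom for the relevant $X_i$. For $n \geq 2$ the image is a reduced word of length $\geq 2$ in $f_{1*}(\pi_1 X_1) * f_{2*}(\pi_1 X_2) \leq \pi_1 F$, hence not conjugate into any cyclic peripheral subgroup of $\pi_1 F$, and therefore non-peripheral.

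The main obstacle will be simultaneously achieving convexity of the band sum in ${\mathbb H}^2$ and the free product structure: convexity demands $\tilde\gamma$ short and close to geodesic, while the free product structure requires the chosen lifts $\tilde X_i$ to be genuinely disjoint with no spurious identifications under the $\pi_1 F$-action. Both should be achievable by a judicious choice of lifts (using conjugation in $\pi_1 F$) and of the path $\gamma$.
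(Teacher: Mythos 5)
Your construction has a genuine gap at the point you yourself flag as ``the main obstacle,'' and the fix is not the ``judicious choice of lifts'' you suggest while keeping $\gamma$ short. First, the raw union $\tilde X_1\cup\tilde B\cup\tilde X_2$ of two convex sets joined by a thin band is essentially never convex in ${\mathbb H}^2$ (a geodesic between points of $\tilde X_1$ and $\tilde X_2$ on either side of the band leaves the band), so the convexity you propose to ``verify'' fails; one must pass to a convex thickening of the union, as the paper does. Second, and more seriously, with a short connecting path there is no reason that the subgroup of $\pi_1F$ generated by $f_{1*}(\pi_1X_1)$ and the $\gamma$-conjugate of $f_{2*}(\pi_1X_2)$ is their free product, nor that $f_*$ is injective. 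Take $\Scal_1=\Scal_2$ (the same immersed spider twice) and $\gamma$ short: the two image subgroups essentially coincide, the map from $\pi_1X_1\ast\pi_1X_2$ to $\pi_1F$ has enormous kernel, and no convex $X$ with the stated properties can exist for that choice of band, since convexity of $X$ plus $f$ a local isometry forces $f_*$ to be injective. So the length and position of the connecting arc is the actual content of the lemma, not a technicality: the paper takes $\lambda$ to be a \emph{long immersed geodesic} arc, maps a rectangle $D$ to a thin convex neighborhood of it, and for a suitable such $\lambda$ a ping-pong/combination-type argument gives simultaneously that the free product injects, that $X_1\cup D\cup X_2$ has a convex thickening staying close to the union (so the feet of $X$ correspond exactly to the feet of $X_1\sqcup X_2$), and that (I3)--(I4) hold.

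Even granting a genuine free product embedding, your deduction of (I3) is also incomplete: ``a reduced word of length $\ge 2$ in $f_{1*}(\pi_1X_1)\ast f_{2*}(\pi_1X_2)$ is not conjugate into a cyclic peripheral subgroup'' is not an algebraic consequence of the spider axioms for $X_1$ and $X_2$; peripherality is a statement about conjugacy classes in $\pi_1F$, and nothing prevents a length-two word in some free product subgroup from happening to be conjugate to a power of a boundary curve. Ruling this out again uses the geometry of the long geodesic band (the geodesic representative of such a loop must travel along $\lambda$, which a suitable choice keeps away from behaving like a boundary geodesic), i.e.\ exactly the quantitative input your short-band construction discards.
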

\begin{proof} There is a  rectangle $D$ which maps to a convex neighborhood 
of a long immersed geodesic
arc $\lambda$ connecting $X_1$ and $X_2$. For a suitable choice of $\lambda$
 there is a convex thickening, $X$, of $X_1\cup D\cup X_2$.
Details are left to the reader.
\end{proof}

\begin{lemma}\label{ringedspiders} Suppose $(F,X',f')$ is an immersed spider. Then there is
an immersed spider $(F,X,f)$ such that $X'\subset X$ and $f|X=f'$ and 
\begin{itemize}
\item[(E1)] Every component of $\partial X$ contains at most one foot of $X'$.
\item[(E2)] Every component  $D$ of $cl( X\setminus X')$ is a disc and $\partial D\cap\partial X\ne\phi$.
\end{itemize}\end{lemma}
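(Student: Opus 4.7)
The plan is to enlarge $X'$ by attaching one small disc for each ``extra'' foot on each boundary component of $X'$, glued along non-foot subarcs of $\partial X'$ so as to split a boundary component carrying several feet of $X'$ into separate boundary components of $X$ each carrying a single foot. Concretely, for each boundary component $C$ of $X'$ with feet $e_1,\dots,e_k$ in cyclic order ($k\ge 2$) interleaved by non-foot arcs $a_1,\dots,a_k$, I attach $k-1$ discs $D_1,\dots,D_{k-1}$, where $D_i$ is glued to $X'$ along a short arc $\alpha_i\subset a_i$ and a short arc $\beta_i\subset a_{i+1}$ near the corners of $e_{i+1}$.

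To construct $D_i$, I use that $f'$ is a local isometry. I choose a geodesic arc $\lambda_i$ in $F$ from a point $p_i\in f'(\mathrm{int}(a_i))$ to a point $q_i\in f'(\mathrm{int}(a_{i+1}))$, and a thin convex rectangular neighborhood $R_i\subset F$ of $\lambda_i$ bounded on two opposite sides by short subarcs of $f'(a_i)$ and $f'(a_{i+1})$. Pulling $R_i$ back through the local isometry $f'$ yields an abstract hyperbolic disc $D_i$ attached to $X'$ along the two arcs $\alpha_i,\beta_i\subset\partial X'$, with $f|D_i$ the isometric identification onto $R_i$. Gluing all such discs to $X'$ produces $X = X'\cup\bigcup D_i$, with $f$ extending $f'$. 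If $X$ is not already convex, I replace it with a small convex thickening, chosen so that the disc-attachment structure of $\mathrm{cl}(X\setminus X')$ is preserved.

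Verification: for (E1), the attachments of the $k-1$ discs subdivide each bad boundary component $C$ into $k$ boundary components of $X$, each containing exactly one foot of $X'$; components of $\partial X'$ already carrying at most one foot are untouched. For (E2), each $D_i$ is a disc with $D_i\cap X' = \alpha_i\sqcup\beta_i$ a pair of arcs, so $\partial D_i$ contains two further arcs lying in $\partial X$, and in particular $\partial D_i\cap\partial X\neq\emptyset$.

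The main obstacle is verifying the immersed-spider axioms for $(F,X,f)$, especially (I3). Attaching a disc along two arcs introduces a new free generator of $\pi_1(X)$, represented by a loop $\gamma_i$ that traverses $D_i$ once and returns through $X'$; under $f$ this maps to the closed curve obtained from $\lambda_i$ and an arc of $f'(\partial X')$ through $f'(e_{i+1})$. If $\lambda_i$ were chosen too close to $\partial F$, this image would be null-homotopic in $F$, contradicting $\pi_1$-injectivity of $f$ (hence convexity of $X$); so $\lambda_i$ must be chosen so that the resulting closed curve represents a non-peripheral, non-trivial element of $\pi_1(F)$. This is possible because $F$ has non-trivial fundamental group ($\chi(F)<0$), and by similar care in positioning the attaching arcs one also secures (I4) for arcs in $X$ joining distinct feet.
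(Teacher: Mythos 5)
Your construction is essentially the paper's: the paper likewise attaches, around a foot, a disc glued to $\partial X'$ along two short arcs flanking that foot (phrased there as gluing an annulus $A(L)$ onto each leg $L$, with $cl(A(L)\setminus R)$ the attached disc), realized geometrically over a long non-peripheral geodesic and then convexified, with the verification of (I3), (I4) and convexity left at the same level of detail ("details are left to the reader"). The only differences are bookkeeping --- you attach $k-1$ bridges per boundary circle instead of one ring around every foot, and your bands run over arbitrary long geodesic arcs rather than loops wrapping many times around the boundary component of $F$ containing the image of the foot --- and these do not change the argument or its later use.
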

\begin{proof} Glue an annulus $A(L)$ onto each leg $L$ of $X'$, such that $R=X'\cap A(L)$ is a rectangle  in $L$ 
 that separates the foot of $L$
 from the body of $X'$ and the closure
 of $A(L)\setminus R$ is a disc $D(L)$. The resulting surface satisfies (E1) and (E2). This can be done so that the result
 has a convex thickening for which there is an isometric immersion of $X$ into $F$
 extending $f'$. The core curve of $A(L)$ maps to a long immersed geodesic
 loop in $F$ which is not peripheral, but wraps many times around the boundary component 
 containing the foot of $L$. Details are left to the reader.
\end{proof}

The following implies there is a single conservative cover of a compact hyperbolic surface
$F$ such that finitely many immersed spiders in $F$ simultaneously lift to embeddings
that are non-separating.

\begin{theorem}[embedded spiders]\label{manyspiders} Suppose $\Scal=(F,\Xcal,f_{\Xcal})$ is a connected, immersed, spider surface.
There is a connected, embedded spider surface $\tilde\Scal=(\tilde F,\tilde \Xcal)$ which is a spider cover
of spider degree $1$ of $\Scal$. Furthermore
 $|\partial\tilde F|=|\partial F|$ and $\tilde F\setminus \tilde\Xcal$ is connected and
  $\beta_1(\tilde F)>\beta_1(\tilde\Xcal\cup\partial\tilde F)$.
 \end{theorem}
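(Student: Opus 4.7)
The plan is to amalgamate the components of $\Xcal$ into a single connected immersed spider in $F$, lift that enlarged spider to an embedding via (\ref{convexlift}), and then read off the individual embedded lifts of the original spiders from the big lift. First I iterate the band-sum lemma (\ref{bandsumspiders}) on the finitely many components of $\Xcal$, producing a connected immersed spider $(F,Z_0,g_0)$ in which each component of $\Xcal$ appears via $g_0$ as a disjoint sub-surface of $Z_0$, consecutive ones joined by disc bands that meet $\partial Z_0$ and contain no spider feet. Then I apply (\ref{ringedspiders}) to $(F,Z_0,g_0)$ to obtain $(F,Z,g)$ with $Z_0\subset Z$, $g|Z_0=g_0$, satisfying (E1) and (E2); each leg of $Z_0$ acquires an annular collar in $Z$ whose core curve is non-peripheral in $F$, and $\cl(Z\setminus Z_0)$ is a union of discs each meeting $\partial Z$.

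The spider axiom (I3) for $(F,Z,g)$ ensures $g_{*}\pi_1 Z$ has no peripheral element of $\pi_1 F$, so (\ref{convexlift}) applies and produces a conservative cover $p:\tilde F\to F$ together with an embedding $\tilde g:Z\hookrightarrow\tilde F$ lifting $g$ such that $\tilde F\setminus\tilde g(Z)$ is connected. I then define $\tilde\Xcal:=\tilde g(\Xcal)\subset\tilde g(Z_0)\subset\tilde g(Z)$; these are disjoint embedded subsurfaces of $\tilde g(Z)$, one per component of $\Xcal$, so $(p,\tilde g|_{\Xcal})$ is a spider cover of spider degree $1$, and $|\partial\tilde F|=|\partial F|$ by conservativity. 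For connectedness of $\tilde F\setminus\tilde\Xcal$, write it as $(\tilde F\setminus\tilde g(Z))\cup(\tilde g(Z)\setminus\tilde\Xcal)$; the first piece is connected by (\ref{convexlift}), and $\tilde g(Z)\setminus\tilde\Xcal$ is the disjoint union of the band discs from Step~1 and the ring discs from Step~2, each a disc whose boundary meets $\partial\tilde g(Z)$ (by construction of the bands, and by (E2) for the rings), so its interior attaches to the first piece.

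It remains to force the strict inequality $\beta_1(\tilde F)>\beta_1(\tilde\Xcal\cup\partial\tilde F)$. Because the spider degree is $1$ and $p$ is conservative, $\beta_1(\tilde\Xcal\cup\partial\tilde F)\le\beta_1(\Xcal)+|\partial F|$ is a fixed constant, whereas $\beta_1(\tilde F)$ grows linearly with the degree of the cover $p$. The plan is to augment the finite forbidden set $B\subset\pi_1(F,x)\setminus g_{*}\pi_1 Z$ used in the conservative separability step inside (\ref{convexlift}), adding finitely many representatives of classes that should remain independent from $\tilde\Xcal\cup\partial\tilde F$ in $H_1(\tilde F)$, for instance iterates of the non-peripheral ring-core loops produced in the ringing step. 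Enlarging $B$ only further constrains the cover, preserving conservativity, the embedding of $Z$, and the connected complement, while forcing the degree (and hence the genus) of $\tilde F$ upward. This quantitative strengthening of the input to the conservative separability theorem is the main obstacle; once arranged, the ring-core loops live in $\tilde g(Z)\setminus\tilde\Xcal$ and are non-peripheral in $\tilde F$, so they contribute $H_1$-classes outside the image of $H_1(\tilde\Xcal\cup\partial\tilde F)\to H_1(\tilde F)$, yielding the required strict inequality.
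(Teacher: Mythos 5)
Your outline follows the paper's route (band-sum the spiders, apply the ringing lemma, then use conservative separability to embed, then check connectivity), but the step you yourself flag as the main obstacle --- forcing $\beta_1(\tilde F)>\beta_1(\tilde\Xcal\cup\partial\tilde F)$ --- is where the argument genuinely fails. Your proposed mechanism is to enlarge the forbidden set $B$ inside the proof of (\ref{convexlift}) by, e.g., iterates of the ring-core loops. But those loops are essential loops in $Z$, so their classes lie in $H=g_*\pi_1Z$, and the conservative separability theorem requires $B\subset\pi_1(F,x)\setminus H$; since $H\subset p_*\pi_1(\tilde F,\tilde x)$, these elements can never be excluded, and indeed they always lift because $Z$ lifts. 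More fundamentally, excluding elements from $p_*\pi_1(\tilde F,\tilde x)$ only controls which loops lift; it gives no lower bound on the covering degree and no linear independence in $H_1(\tilde F)$, and the final inference (``the ring cores are non-peripheral and miss $\tilde\Xcal$, hence give classes outside the image of $H_1(\tilde\Xcal\cup\partial\tilde F)$'') is a non sequitur --- and even a class outside that image would not by itself prove the stated Betti-number inequality. The missing idea is much simpler, and you already have the two relevant facts in hand: $\beta_1(\tilde\Xcal\cup\partial\tilde F)$ is fixed for any conservative, spider-degree-one cover, while $\beta_1(\tilde F)$ grows linearly in the covering degree by multiplicativity of $\chi$. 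So one just arranges that the conservative cover has sufficiently large degree (e.g.\ by passing to a further conservative cover to which the embedded spider lifts with connected complement), which is exactly what the paper does; no bespoke homology classes are needed.

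There is also a hypothesis violation earlier: you apply (\ref{convexlift}) directly to $(F,Z,g)$, but (\ref{convexlift}) requires the local isometry to map into $\int(F)$, whereas $Z$ is a spider and has feet on $\partial F$. The paper avoids this by attaching a collar to form $F^+$, so that $X$ is immersed in $\int(F^+)$, lifting there, and then re-attaching legs (rectangles running out to $\partial\tilde F^+$); it is precisely at this re-attachment that condition (E1) of (\ref{ringedspiders}) is used to show the complement stays connected, and a homeomorphism of pairs $(\tilde F,\tilde X)\cong(\tilde F^+,\tilde X^+)$ transfers the conclusions back. Your connectivity argument for $\tilde F\setminus\tilde\Xcal$ is in the right spirit, but as written it is anchored to an embedded lift of $Z$ with feet on $\partial\tilde F$ that you have not legitimately produced, and it also omits the ``convex thickening'' portions of $Z\setminus\Xcal$ beyond the band and ring discs. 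Both gaps are repairable, but the repair is essentially the paper's collar-plus-legs construction together with a large-degree conservative cover, not the forbidden-set strengthening you propose.
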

 \begin{proof}  
By banding the spiders of $\Xcal$ together using (\ref{bandsumspiders})  we obtain an immersed spider 
$(F,X',f')$ containing
 $\Xcal$.
Let $(F,X,f)$ be the immersed spider surface with $X'\subset X$ given by (\ref{ringedspiders}). 
Let $F^+$ be $F$ union a compact convex collar on each
component of $\partial F$. Then $X$ is immersed in the interior of $F^+$ so by (\ref{convexlift}) there is a conservative 
cover $\tilde{F}^+$ of $F^+$ and an embedded lift of $X$ to
$\tilde{X}\subset \tilde F^+$ with $\tilde F^+\setminus \tilde{X}$ connected. Thus $\tilde X$ is an embedded spider
in the subsurface $\tilde F\subset\tilde{F}^+$.

For each foot $A\subset \partial\tilde X\cap\partial\tilde F$  
there is a rectangle $L=L(A)\subset\tilde{F}^+$ with one side $A$
and the opposite side of $L$ is an arc  in $\partial \tilde F^+$. Gluing these onto $X$
 gives an embedded
 spider $\tilde{X}^+\supset \tilde{X}$ in $\tilde{F}^+$. 
These rectangles are
the legs of $\tilde{X}^+$ and $\tilde X$ is the body of $\tilde{X}^+$.
 There is a bit of {\em fussing} to arrange that $\tilde{X}^+$ is convex, however the argument below does not require this.

We claim 
$\tilde F\setminus \tilde X$ is connected.  There is a homeomorphism of pairs $(\tilde{F},\tilde{X})\cong (\tilde{F}^+,\tilde{X}^+)$,
so it suffices to show $\tilde F^+\setminus \tilde X^+$ is connected.
 Let $L$ be a leg of $\tilde X^+$ and $B\subset\partial \tilde X$  the component  that
 intersects $L$.  By (\ref{ringedspiders})(E1)   $B$ is disjoint from all the other legs of $\tilde{X}^+$.  
 The  arc  $B\setminus L$ connects the two sides of $L$ thus adding $L$ onto $\tilde{X}$ does not disconnect the complement.
 This proves $\tilde F\setminus \tilde X$ is connected.

There is a lift of $X'\subset X$ to 
$\tilde{X}'\subset \tilde{X}$ and  $\tilde{F}\setminus\tilde{X}'$ is connected because,
by (\ref{ringedspiders})(E2), there is a path connecting every point in $\tilde{X}\setminus\tilde{X}'$ to 
a point   $p\in \partial\tilde X$. Since $\tilde X$ is a spider  we may choose $p$ in the interior of $\tilde{F}$. 
Thus $p$ is connected by an arc in $\tilde{F}\setminus\tilde{X}'$ to a point in the connected set $\tilde F\setminus\tilde X$.

There is a lift of the subsurface $\Xcal\subset X'$ to $\tilde\Xcal\subset\tilde X'\subset\tilde F$ and $\tilde F\setminus\tilde\Xcal$
is connected because $\tilde X'$ is obtained by band-summing the components of $\tilde\Xcal$ and then taking the convex hull.
Shrinking the convex hull and then  deleting  these bands clearly leaves the complement, 
$\tilde F\setminus\tilde \Xcal$,  connected.

The condition on $\beta_1(\tilde F)$ can be ensured by using a conservative cover of very large degree $d$. 
The relation between Euler characteristic and degree of a cover implies we may make 
 $\beta_1(\tilde{F})$ as large as we wish. However since the cover
is conservative and spider degree $1$ it follows that $\beta_1(\tilde\Xcal\cup\partial\tilde F)$ is independent of the cover.\end{proof}

It remains to prove (\ref{simplespiders}). If $F$ is a compact surface with boundary, the {\em capped surface}
 $C(F)=F\cup\Dcal$ is the closed surface obtained by gluing a disc
onto each circle component of $\partial F$, and $\Dcal$ is the union of the closed discs.
If $\Xcal$ is a disjoint union of spiders embedded in $F$
then each component of $\Xcal\cap\partial F$ is an arc and the 
{\em capped spiders} $C(\Xcal)=\Xcal\cup\Dcal$ is a compact subsurface of $C(F)$.

\begin{definition} The {\em spider graph} of a spider surface $\Scal=(\Fcal,\Xcal)$ is a bipartite graph $G=G(\Scal)$ with a {\em spider vertex} 
 $v(X)$ for
  each component $X\subset\Xcal$ 
 and a {\em boundary vertex}
 $v(C)$ for
 each  component $C\subset \partial \Fcal$. There is an edge $e(A)$ for each foot $A\subset \Xcal$. The edge
 $e(A)$ connects $v(X)$ to $v(C)$ where $X\subset \Xcal$  and $C\subset\partial F$ are the
 components containing $A$.
   \end{definition}
   

Embed $G(\Scal)$ in $C(\Xcal)=\Xcal\cup\Dcal$ as follows.
If $D$ is a disc component of $\Dcal$ with $\partial D=C$ then the  vertex $v(C)$ of $G(\Scal)$ is mapped to a point
 in $D$. 
If $X$ is a spider then $v(X)$ is mapped to a point in the spider body $B=B(X)$ of $X$.
The edge $e(A)$ in $G(\Scal)$ with endpoints $v(X)$ and $v(C)$ 
corresponds to the leg $L$ of $X$ with $L\cap C=A$.
This edge
 is mapped to an arc 
$\gamma=\beta\cdot\lambda\cdot\delta$ in $C(\Xcal)$ 
that is the union of an arc $\beta\subset B(X)$ starting at $v(X)$ and ending on $L\cap B(X)$,
 an arc $\lambda\subset L$ connecting $B(X)\cap L$ and $L\cap C$,
  and an arc  $\delta\subset D$ connecting $L\cap C$ to  $v(C)$. 
  It follows that $\Scal$ is simple iff each component of $G(\Scal)$ contains a single spider vertex.

\begin{lemma}\label{addendum} Suppose $(F,\Xcal)$ is an embedded spider surface and $F\setminus \Xcal$ is connected.
Then
the natural map below is injective
 $$\sigma:H_1(G(F,\Xcal);\z2)\longrightarrow H_1(C( F);\z2)\longrightarrow H_1(C(F);\z2)/incl_*(H_1( \Xcal;\z2))$$ 
 Moreover if $\beta_1(F)>\beta_1(\Xcal\cup \partial F)$ then $\sigma$ is not surjective. 
\end{lemma}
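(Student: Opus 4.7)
The plan is to factor $\sigma$ through the long exact sequence of the pair $(C(F),\Xcal)$ and invoke mod-$2$ Poincar\'e--Lefschetz duality on the complementary surface for injectivity, then settle non-surjectivity by a Mayer--Vietoris dimension count.

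For injectivity, the long exact sequence of the pair identifies $\incl_{*}H_{1}(\Xcal;\z2)$ with $\ker(H_{1}(C(F);\z2)\to H_{1}(C(F),\Xcal;\z2))$, so it suffices to show the induced map $H_{1}(G;\z2)\to H_{1}(C(F),\Xcal;\z2)$ is injective. Excising the interior of $\Xcal$ gives $H_{1}(C(F),\Xcal;\z2)\cong H_{1}(V,\partial V;\z2)$ where $V:=\cl(C(F)\setminus\Xcal)$ and $\partial V=\partial\Xcal$. Decomposing each graph edge as $e_{i}=\beta_{i}\cdot\lambda_{i}\cdot\delta_{i}$ with $\beta_{i}\cdot\lambda_{i}\subset\Xcal$ and $\delta_{i}\subset\Dcal\subset V$, a cycle $z=\sum e_{i}$ in $H_{1}(G;\z2)$ maps to the relative cycle $z':=\sum\delta_{i}\in H_{1}(V,\partial V;\z2)$. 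By mod-$2$ Poincar\'e--Lefschetz duality, the intersection pairing $H_{1}(V,\partial V;\z2)\times H_{1}(V;\z2)\to\z2$ is non-degenerate, so it remains to produce, for every non-zero $z$, a loop $\gamma\subset V$ with $\gamma\cdot z'\equiv 1\pmod{2}$.

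To build $\gamma$, I pick a foot $A$ occurring in $z$ with odd multiplicity, let $C\subset\partial F$ be the boundary component containing $A$, and let $D\in\Dcal$ be the disc capping $C$. Inside $D$ the arcs $\{\delta(A_{j}):A_{j}\subset C\}$ fan out from $v(C)$ to distinct points on $\partial D$. Take a short transverse arc $\alpha\subset D$ crossing $\delta(A)$ once and no other $\delta(A_{j})$, with endpoints $s_{1},s_{2}\in\partial D\setminus\bigcup_{j}A_{j}$ close to the two endpoints of $A$ on $\partial D$. The hypothesis that $F\setminus\Xcal$ is connected now provides a path $\beta\subset\cl(F\setminus\Xcal)$ from $s_{1}$ to $s_{2}$; since $\beta\subset F$ it avoids the interior of every disc in $\Dcal$ and hence meets no $\delta$-arc. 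The concatenation $\gamma:=\alpha\cup\beta\subset V$ therefore meets $z'$ only at the single transverse point $\alpha\cap\delta(A)$, giving $\gamma\cdot z'\equiv 1\pmod{2}$.

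For non-surjectivity, I apply Mayer--Vietoris to $\Xcal\cup\partial F$: the intersection is a disjoint union of foot arcs with vanishing $H_{1}$, and the kernel of $H_{0}(\Xcal\cap\partial F)\to H_{0}(\Xcal)\oplus H_{0}(\partial F)$ is exactly $H_{1}(G;\z2)$, yielding $\beta_{1}(\Xcal\cup\partial F)=\beta_{1}(\Xcal)+|\partial F|+\beta_{1}(G)$. Together with $\beta_{1}(F)=2g+|\partial F|-1$ and $\beta_{1}(C(F))=2g$, the hypothesis $\beta_{1}(F)>\beta_{1}(\Xcal\cup\partial F)$ rearranges to $\beta_{1}(G)<2g-1-\beta_{1}(\Xcal)\le 2g-\dim\incl_{*}H_{1}(\Xcal;\z2)-1$. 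The injectivity just proved gives $\dim\Img(\sigma)=\beta_{1}(G)$, strictly less than $\dim\bigl(H_{1}(C(F);\z2)/\incl_{*}H_{1}(\Xcal;\z2)\bigr)=2g-\dim\incl_{*}H_{1}(\Xcal;\z2)$, so $\sigma$ is not surjective. The main technical obstacle is the construction of $\gamma$: one must cross a single $\delta$-arc exactly once and then close up in $V$ without re-entering $\Xcal$, and it is precisely the connectedness of $F\setminus\Xcal$ that furnishes a closing path contained in $F$ and hence automatically disjoint from every $\delta$-arc.
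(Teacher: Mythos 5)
Your proof is correct and follows essentially the same route as the paper: the injectivity step hinges on the identical key construction — a loop crossing a single $\delta$-arc (equivalently, a single foot) exactly once, closed up through the connected complement $F\setminus\Xcal$ — with the paper pairing such a loop directly against the graph cycle and against $incl_*H_1(\Xcal;\z2)$ inside $C(F)$, while you package the same intersection count through the pair $(C(F),\Xcal)$, excision, and Lefschetz duality. Your non-surjectivity argument is the paper's dimension count with the Mayer--Vietoris identity $\beta_1(\Xcal\cup\partial F)=\beta_1(\Xcal)+|\partial F|+\beta_1(G)$ (used implicitly in the paper's displayed computation) made explicit.
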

\begin{proof} Suppose   $\beta\in Z_1(G(F,\Xcal);\z2)$ with
 $0\ne [\beta]\in H_1(G(F,\Xcal);\z2)$. There is an edge $e$ of $G(F,\Xcal)$ with coefficient $1$ in $\beta$.
Let $A\subset\partial\Xcal$ be the foot corresponding to $e$.
Since
$F\setminus\Xcal$ is connected, there is an embedded loop  $\alpha \subset F$ such
that $\alpha\cap \Xcal = A$.  The algebraic intersection of $[\alpha]$ and $[\beta]$ is $1$ thus
$0\ne [\beta]\in H_1(C( F);\z2)$. Every element of $H_1( \Xcal;\z2)$ has intersection number  $0$
with $[\alpha]$. This is because $\Xcal$ can be isotoped into its interior and is then disjoint from $\alpha$.
 Thus $\sigma([\beta])\notin H_1( \Xcal;\z2)$, so $\sigma$ is injective.  
 
 $$\begin{array}{rcl}
 \dim[\coker\sigma] & \ge & \beta_1(C(F))-\left[\beta_1(\Xcal)+\beta_1(G(F,\Xcal))\right] \\
  & \ge & \beta_1(F) -\left[\beta_1(\partial F)+\beta_1(\Xcal)+\beta_1(G(F,\Xcal))\right]\\
  & = & \beta_1(F)-\beta_1(\Xcal\cup \partial F)
  \end{array}$$
 The additional hypothesis ensures this is positive.
 \end{proof}

A {\em morphism}
 between spider graphs is a simplicial map which preserves the type of each vertex, and
 is an {\em isomorphism} if it is also bijective.
Let $\ell(G)$ be the number of edges in a shortest circuit in $G$. Since $G$ is bipartite, $\ell(G)$ is even.
If there are no circuits $\ell(G)=\infty$.
The {\em boundary} of a spider surface $\Scal=(\Fcal,\Xcal)$ is the boundary
of the underlying surface: $\partial\Scal=\partial\Fcal$

\begin{lemma}\label{spidergraphcover} Suppose $\Scal=(F,\Xcal)$ and $\tilde\Scal=(\tilde F,\tilde\Xcal)$ are connected, embedded,
 spider surfaces. Then a spider cover $(p,q):\tilde{\Scal}\longrightarrow\Scal $  with spider degree $d$ induces a morphism
$p_G:G(\tilde\Scal)\longrightarrow G(\Scal)$ and:
\begin{itemize}
\item[(G1)] If $d=1$ and $p$ is a conservative cover then $p_G$ is an isomorphism.
\item[(G2)] If $d=1$ then  
$\ell(G(\tilde\Scal))\ge \ell(G(\Scal))$.
\item[(G3)] If   $p|\partial C$ is injective for each component $C\subset\partial \tilde F$
and  $\tilde\Xcal=p^{-1}\Xcal$,  then $p_G$ is a covering space projection.
\end{itemize} 
\end{lemma}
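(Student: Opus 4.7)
My plan is to construct $p_G$ from the covering data $(p,q)$ and then verify (G1)--(G3) in turn. Set $p_G(v(\tilde X)):=v(q(\tilde X))$ on spider vertices, $p_G(v(\tilde C)):=v(p(\tilde C))$ on boundary vertices, and $p_G(e(\tilde A)):=e(q(\tilde A))$ on edges. Well-definedness is routine: $p$ is a covering, so each component $\tilde C\subset\partial\tilde F$ lies in a single component of $\partial F$; $q$ bijects components by the spider cover definition; and the commuting square $p\circ\tilde f=f\circ q$ together with $q$'s componentwise-homeomorphism on each copy of $\Xcal$ shows $q(\tilde A)$ is exactly one foot of $\Xcal$. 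Incidence is preserved by construction.

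For (G1) and (G2), both hinge on $d=1$ collapsing $q:\tilde\Xcal\to\Xcal$ to a homeomorphism, so that $p_G$ is bijective on spider vertices and on edges. In (G1), conservativeness gives $|\partial\tilde F|=|\partial F|$; combined with the fact that $p$ sends $\partial\tilde F$ surjectively onto $\partial F$ (covering of manifolds with boundary), every component of $\partial F$ has exactly one preimage component, so $p_G$ is bijective on boundary vertices too and hence an isomorphism. For (G2), a circuit $\gamma=\tilde e_1\cdots\tilde e_\ell$ has no consecutive repetitions, and edge-injectivity of $p_G$ propagates this to $p_G(\gamma)$, which is therefore a backtrack-free closed walk of length $\ell$ in $G(\Scal)$. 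Any such walk contains a cycle of length at most $\ell$, so $\ell(G(\Scal))\le\ell$, giving $\ell(G(\tilde\Scal))\ge\ell(G(\Scal))$.

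For (G3) I will verify the covering condition by checking local bijectivity on stars at each vertex. At a spider vertex $v(\tilde X)$: the embedded spider cover structure already gives $q=p|\tilde\Xcal$ with each component $\tilde X$ a single copy of its image component $X\subset\Xcal$, so $p|\tilde X:\tilde X\to X$ is a homeomorphism; this bijects $\tilde X\cap\partial\tilde F$ with $X\cap\partial F$, giving the star bijection. At a boundary vertex $v(\tilde C)$: the hypothesis that $p|\tilde C$ is injective makes it a homeomorphism onto $p(\tilde C)$, and the hypothesis $\tilde\Xcal=p^{-1}\Xcal$ identifies the feet in $\tilde C$ with the components of $\tilde C\cap p^{-1}\Xcal$; these then biject under $p|\tilde C$ with the feet in $p(\tilde C)$. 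The main obstacle I anticipate is parsing which of the two hypotheses in (G3) does what: the full-preimage condition $\tilde\Xcal=p^{-1}\Xcal$ is not needed at spider vertices (the spider cover definition alone already gives $p|\tilde X$ a homeomorphism), but is essential at boundary vertices to prevent ``missing'' feet in some $\tilde C$, while $p|\tilde C$ injective provides the complementary fact there.
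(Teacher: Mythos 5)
Your proof is correct, and its skeleton (define $p_G$ on the two vertex types and on edges via feet, then check each clause) is the same as the paper's; the differences are in how (G2) and (G3) are discharged. For (G2) the paper argues that when $d=1$ the map $p_G$ is bijective on edges and spider vertices and that $G(\tilde\Scal)$ is obtained from $G(\Scal)$ by splitting some boundary vertices into several vertices, leaving implicit that such splitting cannot shorten circuits; you instead push a shortest circuit forward to an internally non-backtracking closed walk (using edge-injectivity, which you correctly deduce from $d=1$) and invoke the standard fact that such a walk of length $\ell$ forces a circuit of length at most $\ell$ downstairs. That is a cleaner and more verifiable version of the same inequality. For (G3) the paper embeds $G(\Scal)$ in the surface and observes that, under the stated hypotheses, one may arrange $G(\tilde\Scal)=p^{-1}(G(\Scal))$, so $p_G$ is a restriction of the covering $p$; you instead verify star-bijectivity combinatorially at both vertex types, correctly isolating that the spider-cover structure alone handles spider vertices while $p|C$ injective together with $\tilde\Xcal=p^{-1}\Xcal$ is what prevents missing feet at boundary vertices. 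Both routes are fine; the only (trivial) point you leave unstated is surjectivity of $p_G$, which follows from surjectivity of $p$ and $\tilde\Xcal=p^{-1}\Xcal$ and is needed since a covering projection must be onto (note $G(\Scal)$ need not be connected).
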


\begin{proof}  (G1) is obvious. (G3)
follows from the fact the spider graph $G(\Scal)$ embeds in $F$ and since both spiders and components of
$\partial F$ lift, it follows that we can choose embeddings with $G(\tilde{\Scal})=p^{-1}(G(\Scal))$. 

For (G2): if the cover is conservative the result follows from (G1).
Otherwise if the cover is not conservative, then $|\partial\tilde{F}|> |\partial F|$
and $G(\tilde\Scal)$ has more vertices corresponding to components of the boundary
than $G(\Scal)$. Clearly $p_G$ is a bijection
on the interiors of edges and on vertices corresponding to spiders. 
However if $C$ is a component of $\partial F$ then the pre-image of $v(C)$ has one vertex for each component of $p^{-1}(C)$.
One may regard $G(\tilde\Scal)$ as obtained from $G(\Scal)$ by cutting into several pieces some of the vertices of $G(\Scal)$
and attaching the edges to the resulting subdivided vertices in some way.
\end{proof}

\begin{theorem}[simple spiders]\label{simplespiders}
Every connected embedded spider surface $\Scal=(F,\Xcal)$  is spider covered by a
simple spider surface $\tilde\Scal=(\tilde F,\tilde\Xcal)$ such that $\tilde F\setminus\tilde\Xcal$ is
connected and $\tilde F$ has an even number of boundary components. \end{theorem}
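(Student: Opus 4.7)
The plan is to apply Theorem \ref{manyspiders} as a reduction and then iteratively use Lemma \ref{addendum} to produce double covers that progressively simplify the structure of the spider graph. First I would apply Theorem \ref{manyspiders} to pass to a conservative embedded spider cover of spider degree one; relabel it as $\Scal=(F,\Xcal)$. This guarantees $F\setminus\Xcal$ is connected and $\beta_1(F)>\beta_1(\Xcal\cup\partial F)$, which is precisely the hypothesis under which Lemma \ref{addendum} shows that the homological comparison map $\sigma$ is injective but not surjective.

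The main inductive step uses the cokernel of $\sigma$. A nonzero class in $H_1(C(F);\z2)/(\sigma(H_1(G;\z2))+incl_*(H_1(\Xcal;\z2)))$ dualizes to a homomorphism $\phi:\pi_1(F)\to\z2$ that vanishes on every peripheral loop, on $H_1(\Xcal;\z2)$, and on the image of $H_1(G;\z2)$ in $H_1(C(F);\z2)$. The corresponding double cover $p:\tilde F\to F$ is then conservative (each boundary lifts to two disjoint copies), doubles each spider into two disjoint embedded copies, and by Lemma \ref{spidergraphcover}(G3) together with the vanishing of $\phi$ on $\sigma(H_1(G;\z2))$ has spider graph $G(\tilde\Scal)$ equal to two disjoint copies of $G(\Scal)$. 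The induction variable I would use is the maximum over connected components $K$ of $G(\Scal)$ of the number $n(K)$ of spider vertices in $K$: when $\max n(K)=1$ every component is a star, and one further conservative cover (separating the distinct feet on each boundary by a LERF-type argument of the kind used in the proof of Lemma \ref{convexlift}) realises simplicity. For the inductive step one selects $\phi$ to be nontrivial on a $\z2$-class that distinguishes pairs of spiders lying inside a single component of $G(\Scal)$, then passes to a selected component of $\tilde F$ (re-invoking Theorem \ref{manyspiders} to restore the $\beta_1$ inequality), and argues that $\max n(K)$ strictly decreases. After finitely many iterations every component of the spider graph is a star, giving simplicity, and a final conservative double cover — corresponding to a generic peripheral-trivial $\z2$-class made available by Theorem \ref{manyspiders} — handles the parity of $|\partial\tilde F|$ while preserving connectedness of $\tilde F\setminus\tilde\Xcal$.

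The main obstacle is establishing the strict-decrease claim: the naive count $\sum_C(\deg v(C)-1)$ of "defect" simply doubles under the double cover, and by our choice of $\phi$ the spider graph pulls back trivially as $G\sqcup G$, so the correct invariant is subtler. I expect it to be tied to the rank of $\coker\sigma$ — each application of the double-cover construction "uses up" one dimension of this cokernel in the way that the new surface $\tilde F$ records the class $\phi$ in its own homology — so that the monotonicity follows from a homological accounting argument. Getting this accounting right, rather than anything geometric, is where the work of the proof lies.
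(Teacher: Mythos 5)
There is a genuine gap, and it sits exactly where you locate it: the strict-decrease step is not just ``subtler accounting,'' it fails for the covers you propose. If $\phi$ is dual to a class in $\coker\sigma$, then every boundary component and every spider lifts and the spider graph pulls back as $G\sqcup G$. If you keep both lifts of every spider (spider degree $2$), each component of the new graph is isomorphic to a component of $G$, so your invariant $\max n(K)$ -- and, more to the point, the number of feet on each boundary component -- is unchanged; no progress is possible by iterating this. If instead you keep one lift of each spider (spider degree $1$), you must verify ampleness (S3), and this breaks at every boundary component $C$ that is already simple: its two lifts $C_1,C_2$ receive between them only the feet of the single chosen lift, so one of them gets none. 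So the homology covers of Lemma \ref{addendum} cannot by themselves drive the reduction. Your terminal step is also not available: by Lemma \ref{spidergraphcover}(G1) a conservative cover of spider degree $1$ induces an isomorphism of spider graphs, hence preserves the number of feet on each boundary component; no ``further conservative cover'' can convert a star with a boundary vertex of degree $\ge 2$ (which is still not simple) into a simple configuration. Finally, note that being a star is weaker than simplicity anyway, since one spider may have several feet on one boundary circle.

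The paper's proof uses the homology covers of Lemma \ref{addendum} only to arrange bookkeeping properties -- $|\partial F|\ge 4$ and even, excess $e(F)=|\Xcal\cap\partial F|-|\partial F|$ even, and girth $\ell(G)>4$ -- alternating them with the conservative covers of Theorem \ref{manyspiders} to keep $F\setminus\Xcal$ connected and $\beta_1(F)>\beta_1(\Xcal\cup\partial F)$. The actual reduction of the excess is done by a different device: degree-$2$ (or degree-$3$ cyclic) covers dual to a system of arcs chosen \emph{disjoint from $\Xcal$} (possible because $F\setminus\Xcal$ is connected) with exactly one endpoint on every boundary component except the one or two offending ones; such covers are non-conservative only at those components, every spider lifts, and the girth condition $\ell(G)>4$ guarantees enough distinct spiders with feet there to choose lifts keeping (S3). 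Each such step lowers $e$ by $2$, so finitely many steps reach $e=0$, i.e.\ simplicity. To repair your argument you would need to replace the $\coker\sigma$-covers in the inductive step by covers of this selectively non-conservative type, together with a mechanism (like the girth bound) ensuring the chosen spider lifts remain ample.
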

 
 \begin{proof}  
  For each component $C$ of $\partial F$ the 
{\em excess number of spider feet on $C$} is $$e(C)=|\Xcal\cap C|-1$$ Condition (S3), ample spiders,
implies $e(C)\ge 0$ for all components $C\subset \partial F$.
The {\em excess number of spider feet on $F$} is $$e=e(F)=|X\cap \partial F|-|\partial F|=\sum_C e(C)$$
Observe $e(C)+1$ is degree of the vertex $v(C)$ and is therefore determined by the spider graph $G(\Scal)$. 
 The spider surface is simple iff $e=0$.

\begin{claim} Every connected spider surface
 is spider covered by a spider surface $\Scal=(F,\Xcal)$  with the following properties:
\begin{enumerate}
\item[(F1)] $F\setminus\Xcal$ is connected. 
\item[(F2)]   $|\partial F|\ge 4$ and is even.
\item[(F3)] $e=e(F)$ is even.
\item[(F4)]  $\ell(G(F,\Xcal))> 4$.
 \end{enumerate}
 \end{claim}

Properties (F2)-(F4) are determined by the isomorphism  type of the spider graph $G(F,\Xcal)$.
 Property (F4) says no spider has two feet on the same boundary component (no circuit of length 2) and two
spiders have feet on at most one common boundary component (no circuit of length 4).

\begin{proof}[Proof of claim]  The cover consists of a  sequence  of 5 spider covers (A),(B),(A),(B),(A) of two types called (A) and (B). 
A spider cover {\em preserves}  property (Fn) if whenever the original
spider surface has this property, so does the spider cover.

The type (A) cover is  a conservative cover of $F$ given by (\ref{manyspiders}) and thus has property (F1).
It is conservative and has spider-degree $1$, 
so by (\ref{spidergraphcover})(G1) it preserves the isomorphism type of the spider graph, and therefore
it preserves the remaining properties. 
 
The type (B) cover is the regular cover $p:\tilde{F}\longrightarrow F$ corresponding to the kernel of the natural surjection
 $$\pi_1F\longrightarrow H_1(C(F);\z2)/incl_*(H_1(\Xcal;\z2)).$$ 
 Every spider in $\Xcal$,
  and each boundary component of $F$, lifts for this cover,  and  $\tilde\Xcal = p^{-1}(\Xcal)$ is an {\em ample} collection of 
  disjoint spiders in $\tilde F$. 
 By (\ref{spidergraphcover})(G3)  the induced morphism $$p_G:\tilde{G}=G(\tilde F,\tilde\Xcal)\longrightarrow G=G(F,\Xcal)$$ is
a covering space projection.

Since each type (B) cover is always done just after a type (A) cover,
it follows from (\ref{manyspiders}) and  (\ref{addendum})  that $\sigma$ is injective but not surjective so $\tilde G$ consists of $2^k$ disjoint copies of 
the universal $\z2$-cover of $G$ with $k>0$. Hence the number of vertices (and hence $|\partial F|$)
 and $e(F)$ are all multiplied by  $2^m$ where $m=k+\beta_1(G(\Scal))>0$.
Thus spider covers of type (B) preserves properties (F2),(F3).

We assert that $\ell(\tilde G)=2\ell(G)$. Suppose $\alpha$ is an essential loop in $\tilde{G}$
of minimal length. Then it is a simple closed curve. It projects to an essential loop $\beta$ in $G$
which crosses each edge of $G$ an even number of time because it lifts to the loop $\alpha$.
 Hence the restriction of $p_G$ to $\alpha$ is a $2$-fold covering of $\beta$, which proves the assertion.
 
The initial graph is bipartite so initially $\ell\ge 2$. After doing a type (B) cover twice, $\ell\ge 8$ and $|\partial F|\ge 4$ and $e(F)$ is even.
The final type (A) cover restores (F1). This proves the claim.\end{proof}

We replace the original spider surface by one with the above properties and show that if $e> 0$ then there is a 
spider cover that reduces $e$ by 2  and continues to have these properties. Continuing reduces $e$
to $0$ which is  a spider surface that spider
covers the original and has  properties (F1) and (F2)  proving the theorem.

If $e\ne0$,  then $e\ge 2$ by (F3) and there are two cases to consider:\\
{\bf Case 1} there are  components  $C\ne C'$ of $\partial F$ with $e(C)\ge 1$ and $e(C')\ge1.$\\
{\bf Case 2} there is a  component $C$ of $\partial F$ with $e(C)\ge 2$. \\
Below we describe two spider covers of spider degree $1$
 that reduce $e$ by $2$ and increase $|\partial F|$ by 2, thus
they preserve (F2) and (F3). They preserve (F4) by  (\ref{spidergraphcover})(G2).
In both cases we follow the cover by a type (A) cover. The latter restores (F1), and gives an isomorphic spider graph, so it does not change $e$, and preserves (F2)-(F4).  

\medskip
{\bf Case 1}. Using (F1) and (F2)
  there is a $2$-fold cover $p:\tilde{F}\longrightarrow F$
such that every spider in $\Xcal$ lifts and $C$ and $C'$ are the only components of $\partial F$ with two disjoint
 lifts.

To construct this cover: by (F2) and (F1) we may choose a finite number of pairwise disjoint 
properly embedded arcs in $F$ which are disjoint from $\Xcal$
whose union has exactly one endpoint on each component of $\partial F$ except $C$ and $C'$. There is at least one such arc
 by (F2), so
these arcs represent a nontrivial element of $H^1(F;{\mathbb Z}/2)$ and determine $p$. In fact cross-joining
two copies of $F$ along this family of arcs gives the cover.

To construct a spider cover it remains
to choose one lift of each spider to obtain $\tilde{\Xcal}\subset \tilde{F}$. 
This must be done so $\tilde{\Xcal}$ has ample spiders (S3).
Then replacing $F$ with $\tilde{F}$ increases the number of boundary components of $F$ by
$2$ without changing the number of spider feet, so this reduces the excess by $2$. 

By (F4) no spider has two feet on $C$ so there are at least two distinct spiders $X_1$ and $X_2$  both with feet on $C$. 
Similarly there are $X_1'$ and $X_2'$ for $C'$. It is possible some $X_i$ equals some $X'_j$.
If this is the case we label so that $X_1=X'_1$. However $X_2\ne X_2'$ because $G(\Scal)$ contains no circuit of length $4$.

Since the covering is regular, at least one of the two  lifts of $X_i$ has a spider foot on a given
lift of $C$.
Choose lifts of $X_1$ and $X_2$  so that both of the boundary
components covering $C$ contain a spider foot. 
If $X_1\ne X_1'$ choose an arbitrary lift of $X_1'$. 
It is possible that the lift of $X_1'$ has spider feet on both lifts of $C'$. In this
case choose any lift of $X_2'$. Otherwise,
since $X_2\ne X_2'$,  we are free to choose  a lift of $X_2'$ which 
has a spider foot on the lift of $C'$
that does not  contain a spider foot on the chosen lift of $X_1$.
The remaining spiders
may be lifted in any way. This ensures the lifted spiders are ample (S3).

 \medskip
 {\bf Case 2.}
There is a $3$-fold cyclic cover $p:\tilde{F}\longrightarrow F$ such that every spider
in $\Xcal$ lifts, and the only component of $\partial F$ with more than one pre-image is $C$, and $C$ has
$3$ pre-images. 

To construct this cover,  since $|\partial F|\ge 4$ and is even, there is a finite
set of pairwise disjoint arcs properly embedded in $F$, so
 $C$ contains  one endpoint of each of exactly $3$ distinct arcs
  and every other component of $\partial F$ contains one arc endpoint. By  (F1) we may choose
  these arcs disjoint from $\Xcal$. 
Choose a transverse orientation on these arcs so that the arcs which meet $C$ induce the same orientation on $C$.
These transversally oriented arcs  represent an element of $H^1(F;{\mathbb Z}/3)$ and determine $p$. As before,
the cover can be constructed by cyclically cross-joining 3 copies of $F$ along these arcs.

By (F4) there are at least 3 distinct spiders with feet
on $C$. Choose lifts of these so that there is at least one spider foot
 on each pre-image of $C$. The remaining spiders
can be lifted in any way and the result is ample (S3). 
As before, replacing $F$ by $\tilde{F}$ reduces the excess by $2$.
 \end{proof}

\section{The intersection of Quasi-Fuchsian manifolds}

Suppose $Q_1$ and $Q_2$ are QF 3-manifolds embedded in a hyperbolic $3$-manifold $M$.
We assume the rank-1 cusps of $Q_1$ and $Q_2$ have different {\em slopes} in each rank-2 cusp of $M$.
Then each component $R$ of   
 $Q_1\cap Q_2$ is called an {\em ideal $3$-spider} (\ref{idealspider}) and is 
the union of a compact, convex manifold $R^c$ and finitely many ends called {\em legs} see (\ref{3spiderlemma}).
In  (\ref{gluingregion}) we generalize this when $Q_i$ are immersed in $M$ rather than embedded. 
This gives an immersed ideal $3$-spider
$R\looparrowright Q_i$. 

Next, (\ref{spiderapprox}) gives a two-dimensional approximation of this immersion 
by an immersion $X\looparrowright F_i$ with $F_i$ a finite
area hyperbolic surface with cusps, and $X$ is a convex  surface 
called an {\em ideal $2$-spider}.
An ideal $2$-spider is the union
of a compact convex part and finitely many ends called {\em legs}, each of which maps to a regular neighborhood
of a ray going out into a cusp of $F_i$. 

Truncating the cusps of $F_i$, and cutting the legs off the ideal spider $X$, 
and changing the metric  gives
 a (compact)
immersed spider as defined in section 2. 
Finally (\ref{simplewall})
shows how the problem of finding covers of QF $3$-manifolds
 with gluing regions that are far apart and with simple combinatorics
is related to the spider theorem.
In (\ref{Andsoma})  we relate spiders to some earlier work of Anderson and Soma.
  
  \gap

Suppose $\Bcal\subset{\mathbb H}^n$ is a horoball  centered on a point $x\in\partial{\mathbb H}^n$ bounded by the horosphere
 $\Hcal=\partial\Bcal$. A {\em vertical ray} is a ray in $\Bcal$ that starts on $\Hcal$ and limits on $x$. Given
$P\subset\Hcal$, {\em the set lying above $P$} is called a {\em vertical set} and
  is the union, $V(P)$, of the vertical rays starting on $P$. 
If $P$ is  convex, $V(P)$ is called a {\em thorn} and 
$P$ is called the {\em base of the thorn}. A thorn of dimension $2$ is also called a {\em spike}.
If $P=I\times{\mathbb R}$ is an infinite strip,  $V(P)$ is a {\em slab}

A hyperbolic $n$-manifold $E$ is an {\em excellent end} if it has finite volume and is isometric to $V/\Gamma$ for some
vertical set $V\subset\Bcal$ and discrete group $\Gamma\subset Isom({\mathbb H}^n)$ preserving $V$. 
The {\em horospherical boundary} of $E$ is $\partial_{\Hcal} E=(V\cap \Hcal)/\Gamma$.
An {\em excellent rank-1 cusp} is a $3$-manifold $V/\Gamma$ where $V$ is a slab and $\Gamma$ is a cyclic group of parabolics
preserving $V$.

A (possibly not connected) hyperbolic manifold $M$ is {\em excellent} if  $M=M^c\cup\Vcal_M$ where $M^c$ is
compact and $M^c\cap\Vcal_M= \partial_{\Hcal} \Vcal_M$ and each component of $\Vcal_M$ is an excellent end.
The pair $(M^c,\Vcal_M)$ is called an {\em excellent decomposition} of $M$. 
For example, an ideal convex polytope is excellent and the ends are thorns.
 Also, a complete hyperbolic $n$-manifold with finite volume is excellent since the ends are horocusps.
 Observe that an excellent manifold has finite volume.

If $N$ is a hyperbolic $3$-manifold and $S\subset N$ is an incompressible surface  
  with holonomy $\Gamma$,  then $S$ is a {\em QF surface}  
  if $M_S={\mathbb H}^3/\Gamma$ is
    QF. The convex core of $M_S$
    is a $3$-manifold unless $S$ is Fuchsian, in which case it is $S$.
    To overcome this mild
    technical irritation we define a convex $3$-manifold by $Q(S)=\CC(M_S)$ unless $S$ is
    Fuchsian, in which case $Q(S)=\CH(S\cup U)$ where $U\subset M_S$ is a small open set that meets $S$.  
     It is routine to show that if $S$ is a QF surface then $Q(S)$ has ends that
     are excellent rank-1 cusps thus $Q(S)$ is excellent.
     
      A compact, orientable surface properly embedded in a compact orientable $3$-manifold is {\em essential}  if  
it is  incompressible and $\partial$-incompressible. 

\begin{definition}\label{nicesurface} A  surface $S$  embedded
 in an excellent $3$-manifold  $M=M^c\cup\Vcal$ is {\em excellently essential} if each component of
 $S\cap\Vcal$ is an excellent annulus, and 
$S^c=S\cap M^c$ is a compact essential surface in $M^c$ with $\partial S^c\subset M^c$. 
\end{definition}

A {\em slope} on a torus is an isotopy class of essential simple closed curves. In view of the preceding, it makes
sense to talk about the {\em slope} of a excellently essential surface $S$ in a cusp of $M$, and the slope of a rank-$1$
cusp embedded in a rank-$2$ cusp.

\begin{definition}\label{idealspider}  An {\em ideal $n$-spider}
  is an excellent convex hyperbolic $n$-manifold $X$ with simply connected ends. Thus there is
  an excellent decomposition
  $X=B\cup {\Lcal}$ such that $B$ is compact and convex and each component of $\Lcal$ is
 a thorn. The components of $\Lcal$ are called  {\em legs} and  $B$ is called the {\em body}.
  \end{definition}

If the dimension $n$ is clear from context we will omit it and talk about an {\em ideal spider}. 
The definition implies that the holonomy of an ideal spider
 has no parabolics. A convex ideal polytope with $k$ ideal vertices is an ideal spider  with $k$ legs.
  An ideal spider is {\em degenerate} if it is a regular neighborhood of a geodesic.
 The following is obvious:

\begin{proposition}\label{3spiderlemma} Suppose $M$ is a complete hyperbolic $3$-manifold with finite volume
 and $Q_1,Q_2\subset M$ are excellent  QF
submanifolds. Then $Q_1\cap Q_2$ is excellent.
 If $Q_1$ and $Q_2$ have different slopes in every cusp of $M$, then each component of $Q_1\cap Q_2$
is an ideal spider.\end{proposition}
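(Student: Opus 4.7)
The plan is to decompose $Q_1\cap Q_2$ according to the cusp structure of $M$ and to study the piece in each rank-$2$ cusp via the upper half-space model. First I would note that each component of $Q_1\cap Q_2$ is convex: lifting to $\tilde M=\HH^3$, each component of the preimage of $Q_i$ embeds as a convex subset of $\HH^3$, so each component of the preimage of $Q_1\cap Q_2$ is an intersection of two convex sets, hence convex; quotienting gives that each component of $Q_1\cap Q_2$ is convex.

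To check that $Q_1\cap Q_2$ is excellent, I would write $M=M^c\cup\Ccal_M$ with $\Ccal_M$ a disjoint union of horocusps, chosen so that each excellent rank-$1$ cusp end of $Q_i$ sits inside a horocusp of $M$ and $Q_i\cap M^c$ is compact. Then $(Q_1\cap Q_2)\cap M^c$ is compact, and it remains to analyse $Q_1\cap Q_2$ inside each horocusp $C$ of $M$. Fix such a $C$, a covering horoball $\Bcal\subset\HH^3$ with bounding horosphere $\Hcal$, and the rank-$2$ parabolic deck group $\Gamma_C\cong\ZZ^2$ acting on $\Hcal$ by Euclidean translations. The preimage of $Q_i\cap C$ in $\Bcal$ is a $\Gamma_C$-invariant union of vertical slabs, each vertical over a Euclidean strip in $\Hcal$ parallel to the slope $\alpha_i$ of $Q_i$ in $C$. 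The intersection of a $Q_1$-slab with a $Q_2$-slab is the vertical set over the intersection of their base strips, and projects to $C$ as a component of $Q_1\cap Q_2\cap C$. Even without the slope hypothesis each such end is excellent (a thorn when the strips meet in a bounded region, an excellent rank-$1$ cusp when the strips are parallel and intersect in another strip), so $Q_1\cap Q_2$ has an excellent decomposition and is excellent.

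Under the additional hypothesis $\alpha_1\neq\alpha_2$ in every cusp of $M$, the two families of strips are transverse in every $\Hcal$, so each intersection strip is a bounded parallelogram $P$ and the associated vertical set $V(P)$ is a thorn. Because $\Gamma_C\cong\ZZ^2$ acts freely by translations it stabilises no bounded subset of $\Hcal$, so $V(P)$ projects injectively to $C$ as a thorn end of $Q_1\cap Q_2\cap C$. Each component of $Q_1\cap Q_2$ is then a connected, convex, excellent hyperbolic $3$-manifold all of whose ends are thorns, hence simply connected, which is precisely an ideal $3$-spider in the sense of Definition \ref{idealspider}. The only delicate point is the injectivity of the projection $V(P)\to C$, which uses the triviality of the $\ZZ^2$-stabiliser of a bounded base; the rest is routine cusp bookkeeping.
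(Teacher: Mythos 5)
Your argument is correct, and it is worth noting that the paper offers no proof at all of this proposition --- it is introduced with ``The following is obvious'' --- so your write-up is essentially a careful justification of what the authors treat as routine cusp bookkeeping: convexity of components from intersecting convex lifts in $\HH^3$, compactness of the piece in $M^c$, and the upper half-space analysis in each rank-$2$ cusp where the two families of slabs sit over Euclidean strips parallel to the respective slopes, so that transversality of slopes forces the base of each intersection to be a bounded parallelogram and hence each end to be a thorn. Two small points of care. First, your phrase ``all of whose ends are thorns, hence simply connected'' should be read as asserting that the \emph{ends} are simply connected; the component itself need not be simply connected, and Definition \ref{idealspider} only requires simply connected ends (indeed the paper explicitly allows ideal spiders that are not simply connected, cf.\ the general case in the proof of (\ref{spiderapprox})). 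Second, injectivity of the projection $V(P)\to C$ requires slightly more than triviality of the stabiliser of the bounded base $P$: you need $\gamma V(P)\cap V(P)=\emptyset$ for every nontrivial $\gamma\in\Gamma_C$, i.e.\ that $V(P)$ is a full component of the preimage of $Q_1\cap Q_2\cap C$ in $\Bcal$. This follows by the same mechanism --- a nontrivial Euclidean translation cannot carry the bounded set $P$ to a set meeting $P$ unless it moves points a bounded amount along an invariant direction, which is impossible for a set contained in the transverse intersection of two strips of distinct slopes once one notes that any overlap would merge translates into a $\Gamma_C$-invariant unbounded component, contradicting boundedness of the base of each strip intersection --- but it deserves a sentence rather than being folded silently into the stabiliser remark.
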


If $M$ and $N$ are excellent hyperbolic manifolds a map $f:M\longrightarrow N$ is {\em excellent}
if it is a local isometry and there are excellent decompositions with $f^{-1}(N^c)=M^c$. It follows
that each vertical ray in $\Vcal_M$ maps to a vertical ray in $\Vcal_N$. 

An {\em immersed QF manifold} is $(M,Q,f)$ where $f:Q\longrightarrow M$ is an excellent map between excellent hyperbolic
3-manifolds and $Q$ is QF. Two immersed QF manifolds $(M,Q_1,f_1)$ and $(M,Q_2,f_2)$ have {\em different slopes}
if for every cusp $V_i\subset Q_i$ whenever $f_1(V_1)$ and $f_2(V_2)$ are in the same cusp of $M$ then they have different slopes.
An  {\em immersed ideal $n$-spider} is $(M,R,p)$ where $M$ is an excellent $n$-manifold and $R$ is an ideal $n$-spider
 and $p:R\longrightarrow M$ is excellent.

Suppose $Q$ is an excellent QF 3-manifold
  and  $(Q,R,p)$ is an immersed ideal $3$-spider. We show in (\ref{spiderapprox}) that this is {\em approximated}
 by an immersed ideal $2$-spider $(F,X,f)$ for some complete hyperbolic surface $F$ with cusps.   

 If $N$ is a submanifold of a covering of a hyperbolic manifold $M$,
  the restriction of the covering space projection
 gives a local isometry $p:N\longrightarrow M$  called the {\em natural projection}.
  If $S$ is a QF surface in $M$ it is easy to see that the natural projection
 $Q(S)\longrightarrow M$ is excellent.
  The following generalizes (\ref{3spiderlemma}) to immersed QF manifolds.
    
     \begin{theorem}
     \label{gluingregion} 
   Suppose that $(M,Q_1,f_1)$  and { $(M,Q_2,f_2)$ are} two immersed QF manifolds with different slopes.
  Suppose $q_i\in Q_i$ and the {\em basepoint} $m=f_1(q_1)=f_2(q_2)$ is in a horocusp of $M$.
   
   Then there is a connected hyperbolic $3$-manifold $P=\tilde{Q}_1\cup\tilde{Q}_2$ where $p_i:\tilde{Q}_i\longrightarrow Q_i$ is a finite covering and $R=\tilde{Q}_1\cap\tilde{Q}_2$ is an ideal $3$-spider with at least 2 legs,
    thus $(Q_i,R,p_i|_R)$ is an immersed ideal spider. 
   
   The holonomy provides an identification of $\pi_1(M,m)$ with a Kleinian 
group $\Gamma\subset Isom({\mathbb H}^3)$. Then the QF manifolds
$Q_i$ have holonomy
 $\Gamma_i=(f_i)_*(\pi_1(Q_i,q_i))\subset\Gamma$ and the holonomy of $R$ is $\Gamma_1\cap\Gamma_2$.
      \end{theorem}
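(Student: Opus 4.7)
The plan is to reduce to the embedded situation of (\ref{3spiderlemma}) by first identifying $R$ as a quotient of a convex set in $\mathbb{H}^3$ and then using subgroup separability to embed $R$ as a convex submanifold inside finite covers of $Q_1$ and $Q_2$, which will glue to form $P$.

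Use the holonomy to identify $M = \mathbb{H}^3/\Gamma$ and fix a lift $\tilde m \in \mathbb{H}^3$ of $m$. Since $Q_i$ is convex and $f_i$ is a local isometry, its developing map identifies $\widetilde{Q_i}$ with a convex subset $\tilde Q_i^* \subset \mathbb{H}^3$ preserved by $\Gamma_i$; normalize the lifts so that $\tilde m \in \tilde Q_1^* \cap \tilde Q_2^*$. Set $\tilde R^* := \tilde Q_1^* \cap \tilde Q_2^*$ (convex and nonempty), $H := \Gamma_1 \cap \Gamma_2$ (stabilizing $\tilde R^*$), and $R := \tilde R^*/H$. The ideal spider structure of $R$ is the immersed analogue of (\ref{3spiderlemma}): in each rank-$2$ cusp horoball $\tilde B$ of $\mathbb{H}^3$, excellence of $Q_i$ makes $\tilde Q_i^* \cap \tilde B$ a slab with a definite slope, so the different-slopes hypothesis forces any two meeting slabs to intersect in a thorn. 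Away from cusps, $H$ acts cocompactly on a compact core of $\tilde R^*$. The basepoint produces a leg through $\tilde m$; a second leg exists because a convex finite-volume hyperbolic manifold with parabolic-free holonomy cannot have a unique simply-connected end.

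To realize $R$ inside a finite cover of each $Q_i$, note that $\Gamma_i$ is a finitely generated surface group with punctures and is therefore LERF by Scott's theorem. The set $E_i := \{\gamma H \in \Gamma_i/H : \gamma \tilde R^* \cap \tilde R^* \neq \emptyset\}$ is finite by the finite covolume of $H$ on $\tilde R^*$. Separability then provides a finite-index subgroup $\Gamma_i' \subset \Gamma_i$ containing $H$ and disjoint from every nontrivial class in $E_i$. Let $\tilde Q_i := \tilde Q_i^*/\Gamma_i'$ with $p_i$ the natural projection to $Q_i$; the choice of $\Gamma_i'$ makes the inclusion $\tilde R^* \hookrightarrow \tilde Q_i^*$ descend to an embedding of $R$ as a convex submanifold of $\tilde Q_i$. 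Finally, define $P := \tilde Q_1 \cup_R \tilde Q_2$: the hyperbolic metrics on the two pieces agree on $R$ (each comes from $\mathbb{H}^3/H$), so $P$ is locally hyperbolic and connected, and by construction $\tilde Q_1 \cap \tilde Q_2 = R$ with holonomy $H = \Gamma_1 \cap \Gamma_2$, as required.

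The main obstacle is controlling which $\Gamma_i$-translates of $\tilde R^*$ meet $\tilde R^*$: the finiteness of $E_i$ hinges on the compact-modulo-cusps structure of $R$ from the first step, and then one must invoke LERF in exactly the right form to separate all bad cosets simultaneously while retaining $H$. A secondary technical point is verifying that $P$ is a genuine manifold with no branching along $\partial R$, which will follow from $R$ lying in the interior of each finite cover $\tilde Q_i$.
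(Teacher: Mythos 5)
Your outline follows the paper's own route (lift to ${\mathbb H}^3$, set $R=(\tilde Q_1^*\cap\tilde Q_2^*)/(\Gamma_1\cap\Gamma_2)$, use separability to embed $R$ in finite covers of the $Q_i$, then glue), but the central step is missing: you simply assert that ``away from cusps, $H$ acts cocompactly on a compact core of $\tilde R^*$,'' and everything downstream rests on exactly that claim --- finite generation of $H$ (needed before LERF can even be invoked), finiteness of the bad translates meeting $\tilde R^*$, and the fact that $R$ is an ideal $3$-spider (compact body plus finitely many thorns). In the immersed setting this is the non-trivial content of (\ref{gluingregion}); it is essentially the statement that $\Lambda(\Gamma_1)\cap\Lambda(\Gamma_2)$ differs from $\Lambda(\Gamma_1\cap\Gamma_2)$ only by cusp points (Anderson/Soma, cf.\ (\ref{Andsoma})), and the bulk of the paper's proof is a self-contained volume/pigeonhole argument for it: if $N_1(R^c)$ had infinite volume, some point of $N_1(Q_1^c)$ would have infinitely many preimages in $R$, and comparing $\Gamma_1$- and $\Gamma_2$-orbits of their lifts, using compactness of $Q_1^c$ and $Q_2^c$, forces two of them into the same $(\Gamma_1\cap\Gamma_2)$-orbit, a contradiction. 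Note that (\ref{3spiderlemma}) is stated only for \emph{embedded} $Q_i\subset M$, so it cannot be quoted to give the spider structure here; some argument of the above kind is unavoidable, and your proposal does not supply one.

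Two smaller problems. First, the set of $\gamma\in\Gamma_i$ with $\gamma\tilde R^*\cap\tilde R^*\ne\emptyset$ is invariant under left and right multiplication by $H$, so it is a union of \emph{double} cosets $H\gamma H$; your $E_i$, a set of single cosets $\gamma H$, is typically infinite, and the separability argument must be run on finitely many double-coset representatives, and on the truncated core $R^c$ (using excellence of the ends to rule out identifications in the thorns, as the paper does). Second, your justification for the second leg is false as a general principle: the convex hull of a compact convex set together with one ideal point is a convex, finite-volume, excellent manifold with trivial (hence parabolic-free) holonomy and a single simply connected thorn end. The paper instead uses the hypothesis that $m$ lies in a horocusp of $M$: since the two surfaces have different slopes there, $R$ contains an essential arc of their intersection whose two ends exit cusps, contributing two legs (equivalently, the two limit Jordan curves through that parabolic fixed point are invariant under transverse parabolic translations and so must meet again at a second ideal point of $\tilde R^*$).
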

    \begin{proof} This is a special case of the virtual simple gluing theorem (4.3) in \cite{BC1}. For the convenience of the reader we include a self-contained proof.

 Let $Q'_i\subset{\mathbb H}^3$ be the embedding of the universal cover of $Q_i$ preserved by $\Gamma_i$ so $Q_i=Q'_i/\Gamma_i$. 
The set $R'=Q'_1\cap Q'_2$ is convex 
 hence so is the manifold $R=R'/\Gamma_R$ where $\Gamma_R=\stab(R')=\Gamma_1\cap\Gamma_2$.
  We prove $\Gamma_R$
 is finitely generated. Hence  it is a separable subgroup of the free group $\Gamma_i$. 
 Since $R$ is convex it embeds in some finite covers of the $Q_i$.
 These coverings are then glued to produce $P$ by identifying the two copies of $R$.

 We first prove the corresponding statements for the compact cores obtained by removing the cusps
  and then deduce the result by gluing the cusps back on and 
  using the fact they are excellent.
  
 There are excellent decompositions with compact submanifolds $Q^c_i\subset Q_i$ and $M^c\subset M$
 with $m$ outside $M^c$ and
  $f_i^{-1}M^c=Q_i^c$. There is a natural projection $p_R:R\longrightarrow M$
   and we define $R^c=p_R^{-1}M^c$.
The pre-image, $Y\subset R'$, of $R^c$  is obtained from $R'$ by removing the intersections with the interiors
of the pairwise disjoint horoballs covering cusps in $M$. 
It follows that $Y$, and hence $R^c$, are connected.  If 
$U$ is a subset of a convex hyperbolic manifold $\prefab$ define  $N_1(U)=\Ncal_1(U,Th_1(\prefab))$.
We first show that  $R^c$ is 
compact by showing that $\vol(N_1(R^c))<\infty$. Observe that since $Q_i^c$ is
compact $\vol(N_r(Q_i^c))<\infty$ for all $r\ge0$.

There are natural projections 
$g_i:R\longrightarrow Q_i$ with $f_1\circ g_1=f_2\circ g_2$.
If $\vol(N_1(R^c))=\infty$ then, since $\vol(N_1(Q_1^c))<\infty$,
 there is a point $a\in N_1(Q_1^c)$ with infinitely many pre-images $\Acal=g_1^{-1}(a)$ in $N_1(R^c)$.
These project to the same point in $M$ hence there is $0<\delta<1$
 such that the $\delta$-balls in $N_2(R^c)$ centered on the points
of $\Acal$ are all pairwise disjoint.

The pre-image $\tilde\Acal\subset N_1(R')$ of $\Acal$ 
is contained in finitely many $\Gamma_i$-orbits; otherwise 
 $N_2(Q_i^c)$
contains infinitely many pairwise disjoint $\delta$-balls, contradicting it has finite volume.
Hence at least one of the orbits, $\Gamma_1\cdot\tilde{a}$, is infinite.
 But this orbit is contained in $N_1(Y)\subset N_1(({Q}_2^c)')$
and since $Q_2^c$ is compact, the set $\Gamma_1\cdot\tilde{a}$ 
is contained in finitely many $\Gamma_2$-orbits.
Hence there are two distinct points of $\Acal$ with pre-images in 
$\tilde\Acal$ that are in the same $\Gamma_R=\Gamma_1\cap\Gamma_2$ orbit.
But this means they have the same image in $\Acal\subset R=\tilde{R}/\Gamma_R$, 
a contradiction to the assumption that $|\Acal|=\infty$.
This proves the claim.  

Since $R^c$ is compact it follows that $\pi_1R=\Gamma_R$
 is finitely generated. The $3$-manifold $(Q_1'\cup Q_2')/\Gamma_R$  contains $R$ as a submanifold.
 Using subgroup separability in the free groups $\Gamma_i$  there are finite index subgroups $\Gamma_i'\subset\Gamma_i$
 giving finite covers 
$p_i:\tilde{Q}_i\longrightarrow Q_i$ and lifts $\tilde g_i:R\longrightarrow\tilde{Q}_i$ with $p_i\circ \tilde g_i= g_i$ and 
$\tilde g_i|R^c$ is injective.

A  hyperbolic 3-manifold $P$ is obtained from $Q_1'\cup Q_2'\subset{\mathbb H}^3$ by
using $\Gamma_i'$ to identity points in $Q_i'$. Let  $P^c$ be the submanifold that is the 
pre-image of $M^c$ under the natural
projection. Since $Q_i$ is excellent so is $\tilde{Q}_i$ and thus so is $P$. Since the ends of $P$ are vertical
it follows that $g_i$ is injective on all of $R$. Any identifications in the ends of $R$ would produce identifications
on $\partial R^c$ because the ends are excellent.

The hypothesis that the cusps of $Q_1=Q(S_1)$ and $Q_2=Q(S_2)$ always have different
slopes implies the ends of $R$ are thorns.  The spider, $R$,  has at least $2$ legs 
 because the basepoint $m$ 
  is in a cusp of $M$, so $R$ contains an essential arc in $S_1\cap S_2$ which contributes two legs.
\end{proof}
 
The manifold $R$ produced by this theorem is called a {\em gluing region} and the manifold $P$ is called the 
  {\em manifold obtained by  gluing $\tilde{Q}_1$ to $\tilde{Q}_2$ along $R$}. In general $P$ does not have a convex thickening.

The {\em Hausdorff distance} $\delta(A,B)=\delta_X(A,B)$ between two closed subsets $A,B\subset X$ of a 
metric space $X$ is the infimum of $K\in[0,\infty]$ such that $A$ is
 contained in a $K$-neighborhood of $B$ and $B$ is contained in a 
 $K$-neighborhood of $A$.  

The next result provides an immersed ideal $2$-spider $(F,X,g)$ that {\em approximates} an 
immersed ideal $3$-spider $(Q(S),R,p)$ in the sense that there is a bilipschitz homeomorphism between  universal covers
of $Q(F)$ and $Q(S)$ taking each pre-image of $X$ close (in the sense of Hausdorff distance) to a pre-image of $R$.

 \begin{proposition}\label{spiderapprox} Suppose $Q=Q(S)$ is QF 
 and  $(Q,R,p)$ is an immersed ideal $3$-spider with  $k\ge2$ legs. Then there is an
 immersed ideal $2$-spider $(F,X,f)$ with $k$ legs  that {\em approximates} it in the following sense.
  There  is a bilipschitz homeomorphism  $h:Q\longrightarrow Q(F)$ such that
 if  $\tilde{R},\tilde X,\tilde Q$ are universal covers and $\tilde{p}:\tilde{R}\longrightarrow\tilde{Q}$
 covers $p$ there is $\tilde f:\tilde{X}\longrightarrow\tilde{F}$ covering $f$
 such that   
$$\delta(\tilde{p}(\tilde R),\tilde h^{-1}\circ\tilde{f}(\tilde X))<\infty$$
Here we regard $F$ as a flat surface in $Q(F)$. Moreover $f_*(\pi_1X)=p_*(\pi_1R)$.
\end{proposition}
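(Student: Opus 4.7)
The plan is to use the bilipschitz QF-to-Fuchsian model of $Q = Q(S)$ and to build $X$ as the quotient of an explicit convex set in a totally geodesic $\mathbb{H}^2 \subset \mathbb{H}^3$. First I would fix a bilipschitz homeomorphism $h : Q \to Q(F)$ coming from the definition of QF, where $F$ is a finite-area hyperbolic surface of the same topological type as $S$, realized as a totally geodesic surface in the Fuchsian $3$-manifold $Q(F)$. This lifts to a $\pi_1 S$-equivariant bilipschitz map $\tilde h : \tilde Q \to \widetilde{Q(F)}$ between convex subsets of $\mathbb{H}^3$, conjugating the QF holonomy $\rho$ of $\pi_1 S$ to its Fuchsian holonomy $\rho_F$, and admitting a boundary extension that sends limit sets to limit sets and parabolic fixed points to parabolic fixed points.

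Identify $\pi_1 Q$ with $\pi_1 F$ via this conjugation, and set $\Gamma_R := p_*(\pi_1 R) \subset \pi_1 F$. The proof of Theorem \ref{gluingregion} shows $\Gamma_R$ is finitely generated, and since the ideal spider $R$ has no parabolic holonomy, $\Gamma_R$ is convex cocompact as a Fuchsian subgroup. Pick ideal tips $c_1, \dots, c_k \in \partial \mathbb{H}^3$ of representatives of the $k$ legs of $R$ in a fundamental domain for $\tilde R \to R$; these lie in $k$ distinct $\Gamma_R$-orbits of parabolic fixed points of $\rho(\pi_1 Q)$. The boundary extension of $\tilde h$ carries each $c_i$ to the corresponding parabolic fixed point $c_i' \in \partial \mathbb{H}^2$ of $\rho_F(\pi_1 F)$, again in $k$ distinct $\Gamma_R$-orbits. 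Define $\tilde X \subset \mathbb{H}^2$ to be the convex hull of $\Lambda_{\rho_F}(\Gamma_R) \cup \Gamma_R \cdot \{c_1', \dots, c_k'\}$ and set $X = \tilde X / \Gamma_R$. Convex cocompactness of $\Gamma_R$ makes $\CC(\mathbb{H}^2/\Gamma_R)$ compact, and attaching the finitely many spikes over the $\Gamma_R$-orbits of the $c_i'$ exhibits $X$ as an ideal $2$-spider with exactly $k$ legs. The covering projection $\mathbb{H}^2/\Gamma_R \to F$ restricts on $X$ to an excellent local isometry $f : X \to F$ with $f_*(\pi_1 X) = \Gamma_R = p_*(\pi_1 R)$, making $(F,X,f)$ an immersed ideal $2$-spider.

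To verify the approximation condition, let $\tilde f : \tilde X \hookrightarrow \mathbb{H}^2 \subset \mathbb{H}^3$ and $\tilde p : \tilde R \hookrightarrow \mathbb{H}^3$ be the inclusions. The boundary extension of $\tilde h$ sends the accumulation points of $\tilde p(\tilde R)$ in $\partial \mathbb{H}^3$ (namely $\Lambda_{\rho}(\Gamma_R)$ together with the orbits $\Gamma_R\cdot\{c_i\}$) bijectively onto the accumulation points of $\tilde f(\tilde X)$. Both $\tilde h(\tilde R)$, as a bilipschitz image of a convex set, and $\tilde f(\tilde X)$, as a convex subset of a totally geodesic plane, are quasi-convex in $\mathbb{H}^3$ with the same visual boundary, so a standard boundary-geometry argument shows they lie within finite Hausdorff distance in $\mathbb{H}^3$. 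Undoing $\tilde h$ on one side gives $\delta\bigl(\tilde p(\tilde R),\, \tilde h^{-1}\circ\tilde f(\tilde X)\bigr) < \infty$.

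The most delicate point I anticipate is the behaviour near the parabolic fixed points: one needs $\tilde h$ to be equivariant with respect to each $\mathbb{Z}$-subgroup stabilizing a $c_i$, so that the thorn in $\tilde R$ over $c_i$ and the spike in $\tilde X$ over $c_i'$, both vertical sets over convex bases in compatible horoballs, have bases of bounded Hausdorff distance within a horosphere. Since $h$ is a cusp-preserving bilipschitz between excellent manifolds this can be arranged, but it is the step where care is required and where one should ensure compatibility with the excellent decompositions used throughout the paper.
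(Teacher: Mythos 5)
Your construction is essentially the paper's: your $\tilde X=\CH\bigl(\Lambda_{\rho_F}(\Gamma_R)\cup\Gamma_R\cdot\{c_1',\dots,c_k'\}\bigr)$ is exactly the hull of $H(\Lambda(\tilde R))$, where $\Lambda(\tilde R)=\cl(\tilde R)\cap\partial\mathbb{H}^3$ is the limit set of the convex set $\tilde p(\tilde R)$, and $X=\tilde X/\Gamma_R$ with $f$ the natural projection is the paper's $2$-spider. The gap is in your verification of the approximation estimate. You justify $\delta\bigl(\tilde h(\tilde R),\tilde f(\tilde X)\bigr)<\infty$ by asserting that two quasi-convex subsets of $\mathbb{H}^3$ with the same visual boundary lie at finite Hausdorff distance. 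That principle is false: a horoball and a geodesic ray asymptotic to its ideal point are both convex with the same one-point limit set, yet at infinite Hausdorff distance. Only one direction comes from general hyperbolic geometry: every point of $\CH(H(\Lambda(\tilde R)))$ is within distance $2$ of a geodesic with endpoints in $H(\Lambda(\tilde R))$ (Lemma \ref{convexhull}), and pulling such a geodesic back through the global quasi-isometry $\tilde h$ and using quasi-geodesic stability places it boundedly close to $\tilde h(\tilde R)$. The reverse direction --- every point of $\tilde R$ lies boundedly close to $\CH(\Lambda(\tilde R))$ --- is precisely where the spider structure and the hypothesis $k\ge 2$ are used: since $R$ is a compact body together with legs that are thorns over compact bases, every point of $R$ lies within a uniform distance of a bi-infinite geodesic running from one leg into another, and such geodesics lift into $\CH(\Lambda(\tilde R))$. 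This is the paper's Claim 1; without it the Hausdorff bound does not follow, and your closing paragraph about thorn bases in horospheres flags the cusp region but does not supply this bound.

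A secondary point: you assert that attaching spikes over $\Gamma_R\cdot\{c_i'\}$ to the compact convex core of $\mathbb{H}^2/\Gamma_R$ exhibits $X$ as an ideal $2$-spider with exactly $k$ legs. This needs the statement that near each $c_i'$ the hull $\tilde X$ is a thorn over a compact base --- equivalently, in a chart with $c_i'=\infty$ the rest of $H(\Lambda(\tilde R))$ is bounded --- and that is not automatic from convex cocompactness of $\Gamma_R$, because the parabolic stabilizer of $c_i'$ lies in $\Gamma_F$ but not in $\Gamma_R$. The paper obtains this from the Hausdorff estimate itself: passing to the covers $Q'$ and $Q'(F)$ corresponding to $\pi_1R$, the bilipschitz image of each leg of $R'$ lies near a geodesic ray, and the convex surface $X'$ at finite Hausdorff distance from $h'(R')$ must therefore consist of a compact piece with exactly $k$ spikes. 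Finally, your argument should address the degenerate case where $\Gamma_R$ is trivial and $k=2$: there the hull is a single bi-infinite geodesic and must be thickened slightly inside $\mathbb{H}^2$ to yield a (degenerate) ideal $2$-spider.
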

 \begin{proof} 
 By (\ref{QC})  there is a quasiconformal automorphism $H$ of $\overline{\mathbb H}^3$ that conjugates the
 holonomy, $\Gamma_S$, of $Q$ to the holonomy, $\Gamma_F$, of $F$ and
 a bilipschitz homeomorphism $h$ as required and $\tilde{h}=H|{\mathbb H}^3$.   
 We identify the universal cover $\tilde Q$ with  a subset of ${\mathbb H}^3$ and  $\tilde R$ with $\tilde p(\tilde R)\subset {\mathbb H}^3$.
  
 Define $Z=\CH(\Lambda(\tilde{R}))$ and $\tilde{X}=\CH(H(\Lambda(\tilde R)))$. 
Now $\Lambda(\tilde R)\subset\Lambda(\tilde Q)$ and
 $H(\Lambda(\tilde Q))=\Lambda(\tilde F)=\partial{\mathbb H}^2\subset\partial{\mathbb H}^3$.
 Thus
$H(\Lambda(\tilde R))\subset \partial{\mathbb H}^2$ hence
$\tilde{X}\subset {\mathbb H}^2$.   

Let $\Gamma_{\tilde{R}}\subset\Gamma$ be the stabilizer of $\tilde{R}$,
 so $\Gamma_{\tilde R}\cong\pi_1R$.   Then $\Gamma_{\tilde X}=H( \Gamma_{\tilde R}) H^{-1}$
 preserves $\tilde{X}$ and we obtain a hyperbolic surface $X=\tilde{X}/\Gamma_{\tilde{X}}$. 
Since $\Gamma_{\tilde X}\subset\Gamma_F$ there is
 natural projection $f:X\longrightarrow F$.
 
This identification
 of  $\tilde{X}$ with a subset of ${\mathbb H}^2\subset{\mathbb H}^3$
  makes $\tilde f$ the inclusion map, and we omit it in what follows, so that $\tilde X=\tilde f(\tilde X)$.
  With these identification we must show
 $\delta(\tilde R,\tilde h^{-1}(\tilde X))<\infty$. This follows from the next two claims.

\gap
{\bf Claim 1.} $\delta(\tilde{R},Z)<\infty$. 

Since $R$ is convex $Z\subset \tilde{R}$, so it suffices to show there is an upper bound on the distance of
points in $\tilde{R}$ from $Z$.
There is an upper bound on the distance of a point in a thorn from a geodesic ray running down the thorn.
 Since  $R=B\cup\Lcal$ is the union of a compact submanifold $B$ (the body of the spider)
  and $k\ge 2$ legs (which are thorns) there is $K>0$ such that every point  $x\in R$
 is within distance $K$ of some bi-infinite geodesic $\gamma=\gamma(x)$ in $R$ which starts in one leg
 of $R$ and ends in another. These geodesics lift into $Z$, proving the claim. 

\gap
{\bf Claim 2.}  $\delta(Z,\tilde{h}^{-1}(\tilde X))<\infty$.

By (\ref{convexhull}) every point in $Z$ is distance less than $2$ from a geodesic $\gamma$ with 
endpoints in $\Lambda(\tilde R)=\Lambda(Z)$.
Since $\tilde{h}$ is bi-Lipschitz, $\tilde{h}(\gamma)$
 is a quasi-geodesic, so
 there is $K>0$ independent of $\gamma$ such that $\tilde{h}(\gamma)$ lies within a distance
$K$ of a geodesic $\gamma'$ with endpoints in $\tilde{h}(\Lambda Z)$. 
Thus $\gamma'\subset \tilde{X}$
hence $\delta(\tilde{h}(Z),\tilde X)<\infty$.
Since $H$ is bilipschitz  $\delta(Z,\tilde{h}^{-1}(\tilde{X}))<\infty$. 

\gap
 If $R$ is a degenerate ideal $3$-spider it is easy to see that $\tilde{X}$ is a bi-infinite geodesic. In this case
 we thicken $\tilde{X}$  slightly in ${\mathbb H}^2$ to get a degenerate ideal $2$-spider.
 
\gap
{\bf Claim 3.} $(F,X,f)$ is an immersed ideal $2$-spider with $k$ legs. 

If $R$ is simply connected then, except in the degenerate case discussed above,
 $\tilde{X}\cong X$ is an ideal polygon, hence an ideal spider,
 with $k=|\Lambda(\tilde R)|$ vertices.
  In the general case we establish a similar picture in the covers
 $Q'$ of $Q$ and $Q'(F)$ of $Q(F)$  corresponding to $\pi_1R$.
 
Let $h':Q'\longrightarrow Q'(F)$ be the map covered by $\tilde h$. 
The projections of $\tilde R$ and $\tilde{X}$
give submanifolds $R'\subset Q'$  and $X'\subset Q'(F)$ homeomorphic to $R$ and $X$
 that are the images of {\em lifts}  of $p$ and $f$. 

Since $R$ and $Q'$ are convex and have the same fundamental group $Q'$ is a convex thickening of $R'$.
Similarly $Q'(F)$ is a convex thickening of $X'$.
 Since $R'$ is a lift of $R$ it follows that $R'=B'\cup\Lcal'$  is the union of
a compact body $B'$, and $k$ legs. There is a geodesic in $R'$
running from any leg to any other leg. Since $Q'$ is a convex thickening of $R'$ this
geodesic is distance minimizing between any pair of points on it.
Thus
the distance between distinct legs of $R'$ goes to infinity outside compact sets.
Since $h'$ is bilipschitz the image of a leg of $R'$ is contained in some $K$-neighborhood of a geodesic
ray in $Q'(F)$. Now $X'$ is a convex surface and $\delta(h'(R'),X')<\infty$, so $X'$ has
a leg (spike) close to the image of each leg of $R'$. Thus $X$ is the union of a compact subsurface
and $k$ spikes, hence a $2$-spider.  \end{proof}

Informally a {\em wall} is obtained from a QF manifold with finitely many immersed ideal $3$-spiders by deleting the cusps.
\begin{definition}\label{walldef} Suppose  
\begin{enumerate}
\item[(W1)] $Q=Q^c\cup\Vcal$ is an excellent QF manifold.
\item[(W2)] $\Rcal$ is the disjoint union  finitely many ideal $3$-spiders and $p:\Rcal\longrightarrow Q$ is excellent.
\item[(W3)] $(Q,R,p|R)$ is an immersed ideal $3$-spider for each component $R\subset\Rcal$.
\item[(W4)] ({\em Ample spiders}) $p^{-1}(V)\ne\phi$ for each component $V\subset \Vcal$. 
\end{enumerate}
Components of $\Rcal^c:=p^{-1}Q^c$ are called {\em gluing regions} and
 $W=(Q^c,\Rcal^c,p|:\Rcal^c\rightarrow Q^c)$ is called a {\em wall}.  
 The {\em base of the wall} is $\partial_{b}Q^c=Q^c\cap\Vcal$.
  A wall is {\em simple} if $p^{-1}(V)$ is connected for each component $V\subset\Vcal$.
  This means each component of the base of the wall intersects exactly one gluing region.
  \end{definition}

\begin{definition}\label{wallcoverdef} Suppose $W=(Q^c,\Rcal^c,p)$ and 
$\tilde W=(\tilde Q^c,\tilde\Rcal^c,\tilde p)$ are walls.
A  {\em wall cover} $(\pi,\pi'):\tilde W\longrightarrow W$   with {\em gluing degree $d$} consists of covering space maps 
$\pi: \tilde{Q}^c \rightarrow Q^c$ and $\pi':\tilde{\Rcal}^c\longrightarrow \Rcal^c$ 
such that  $\tilde\Rcal^c$ is the disjoint union of $d$ copies of $\Rcal^c$ and $\pi'$ is the natural projection,
 and following the diagram commutes
  $\quad\begin{CD}
\tilde\Rcal^c  @>\tilde p>> \tilde Q^c\\
@VV\pi' V  @ VV\pi V\\
\Rcal^c   @>p>> Q^c
\end{CD}$
 \end{definition}

We will show every wall is covered by a simple wall with embedded gluing regions.

\begin{definition}\label{approxdef} A connected spider surface $\Scal=(F,\Xcal,f)$ {\em approximates} a wall 
$W=(Q^c,\Rcal^c,p)$
if there is a diffeomorphism $h:I\times F\longrightarrow Q^c$ with $h(I\times\partial F)=\partial_{b}Q^c$
and the following holds. 
Let $\tilde{F}$ and $\tilde{Q}^c$ be the universal covers
and $\tilde h:I\times\tilde F\longrightarrow \tilde Q^c$ cover $h$.
Let $A$ denote the set of all submanifolds   $I\times\tilde f(\tilde X)\subset I\times\tilde F$ where $\tilde X$ is the universal cover 
of a component $X\subset\Xcal$ and $\tilde f$ covers $f|_X:X\longrightarrow F$.
 The action of $\pi_1F$ on $\tilde F$ induces an action on $A$.
 Let $B$ denote the set of submanifolds of $\tilde p(\tilde R^c)\subset\tilde Q^c$ where $\tilde R$
 is the universal cover of a component $R\subset\Rcal^c$ and $\tilde p$ covers $p:R\longrightarrow Q$.
   The action of $\pi_1Q^c$ on $\tilde Q^c$ induces an action on $B$.
   
We require there is $K>0$ and  a bijection $\theta:A\longrightarrow B$ such that
$\delta(x,\theta(x))<K$. This bijection is equivariant for the 
 actions  of $\pi_1F$ and $\pi_1Q$
via $h_*$. 
We also require that for every component $C\subset I\times\partial\tilde F$ and $x\in A$ that 
$x\cap C\ne\phi$ iff $\theta(x)\cap \tilde{h}(C)\ne\phi$.
  \end{definition} 
  
It is routine to check that if $\Scal$ approximates $W$ then a spider cover $\tilde\Scal$ induces a wall cover $\tilde W$
and $\tilde\Scal$ approximates $\tilde W$. Moreover $W$ is simple iff $\Scal$ is simple.

\begin{corollary}\label{spiderapproxwall}Every wall is approximated by an immersed spider surface.
\end{corollary}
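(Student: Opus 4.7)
The plan is to apply Proposition~\ref{spiderapprox} to each component of $\Rcal$ and then truncate in the base surface.  Write $Q = Q(S)$; a single quasiconformal conjugation of holonomy (see the proof of (\ref{spiderapprox})) produces a complete finite-area hyperbolic surface $F$ with cusps together with a bilipschitz homeomorphism $h : Q \to Q(F)$ that depends only on $Q$.  Applied to each immersed ideal $3$-spider $(Q, R_j, p|R_j)$, where $R_j$ ranges over the components of $\Rcal$, Proposition~\ref{spiderapprox} produces an immersed ideal $2$-spider $(F, X_j, f_j)$ with the same number of legs as $R_j$ and satisfying the holonomy identity $(f_j)_*\pi_1 X_j = (p|R_j)_*\pi_1 R_j$ under the identification induced by $h_*$.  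Choose a disjoint family of horocyclic cusp neighborhoods of $F$ so deep that for every $j$ their preimage under $f_j$ consists of a disjoint union of horoball neighborhoods of the ideal endpoints of the legs of $X_j$.  Truncating along these horocycles produces a compact convex hyperbolic surface $F^c \subset F$ and compact convex subsurfaces $X_j^c \subset X_j$; set $\Scal = (F^c, \Xcal^c, f)$ with $\Xcal^c = \bigsqcup_j X_j^c$ and $f = \bigsqcup_j (f_j|X_j^c)$.

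I next verify the spider-surface axioms.  Each component of $f^{-1}(\partial F^c)$ is a horocyclic arc at the base of a truncated spike of some $X_j$, which is (I1).  Axiom (I2) follows from (W3) and (\ref{gluingregion}): every $R_j$ has at least two legs, and the leg count is preserved by (\ref{spiderapprox}).  For (I3), an essential loop $\gamma$ in $X_j^c$ represents a nontrivial element of $\pi_1 X_j$; since $R_j$ is an ideal $3$-spider its holonomy contains no parabolics, so via the holonomy identity the image of $f_j\circ\gamma$ in $\Gamma_F$ is nonparabolic and $f_j\circ\gamma$ is not peripheral in $F$.  For (I4), an arc $\gamma$ in $X_j^c$ joining two distinct feet extends in $X_j$ to a geodesic between two distinct ideal points $\xi, \xi' \in \Lambda(\tilde X_j) \subset \partial \mathbb H^2$; the horocyclic components of $\partial \tilde F^c$ near $\xi$ and near $\xi'$ are distinct lifts of $\partial F^c$ (even when $\xi$ and $\xi'$ are $\Gamma_F$-equivalent), so the lifted arc runs between distinct components of $\partial\tilde F^c$ and is essential.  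Axiom (S3) is immediate from (W4): each boundary circle $C \subset \partial F^c$ is the truncation of a cusp of $F$ which corresponds under $h$ to a component $V \subset \Vcal$, and some $R_j$ has a leg in $V$ producing a foot of $X_j^c$ on $C$.

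Finally, the compact part of the convex core $Q(F)$ carries an obvious product structure $I \times F^c$, and composing with $h^{-1}$ produces a diffeomorphism $h' : I \times F^c \to Q^c$ carrying $I \times \partial F^c$ onto $\partial_{b} Q^c$.  The bijection $\theta : A \to B$ is defined on universal covers by matching $I \times \tilde f_j(\tilde X_j)$ with $\tilde p(\tilde R_j)$; the required Hausdorff bound follows from Claims~1--3 in the proof of (\ref{spiderapprox}), which give $\delta(\tilde R_j, \tilde h^{-1}(\tilde f_j(\tilde X_j))) < \infty$, and crossing with the bounded interval $I$ preserves this.  Equivariance under $h'_*$ is automatic because the construction of $\theta$ is $\Gamma$-equivariant, and the boundary-intersection property follows from the chain of equivalences: $\tilde f_j(\tilde X_j)$ meets a horocyclic lift iff $\tilde X_j$ has a spike in the corresponding cusp of $F$, iff $\tilde R_j$ has a leg in the corresponding cusp of $Q$, iff $\tilde p(\tilde R_j)$ meets the corresponding horotorus lift.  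The main obstacle is axiom (I4): ensuring that when two legs of $R_j$ project into the same rank-$1$ cusp of $Q$, the arc between the corresponding feet is still essential in $F^c$; this rests on the fact that distinct legs of $\tilde R_j$ determine distinct points of $\Lambda(\tilde R_j)$, which under the injective map $H$ give distinct boundary components of $\partial \tilde F^c$ rather than merely $\Gamma_F$-equivalent ones.
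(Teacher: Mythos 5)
Your overall route is the paper's: apply (\ref{spiderapprox}) to each component of $\Rcal$, using the single bilipschitz homeomorphism $h$ from (\ref{QC}), truncate along the cusps, and then verify the spider axioms and the conditions of (\ref{approxdef}). Your homotopical verifications of (I2)--(I4), of (S3) via (W4), and of the Hausdorff/equivariance/boundary-matching conditions are essentially the intended ones.

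The genuine gap is the assertion that ``truncating along these horocycles produces a compact convex hyperbolic surface $F^c$ and compact convex subsurfaces $X_j^c$.'' It does not: a compact surface with horocyclic boundary is not convex in the sense used throughout the paper, since in the universal cover ($\mathbb H^2$ minus a $\Gamma_F$-invariant family of open horoballs) a geodesic joining two points of a boundary horocycle leaves the region, and the same happens in the truncated legs of each $X_j$. This is not a cosmetic point: convexity is part of the definition of an immersed spider, and the spider machinery subsequently applied to $\Scal$ (for instance (\ref{convexlift}), (\ref{bandsumspiders}), (\ref{manyspiders})) relies on convexity of $F$ and of the spiders to embed universal covers isometrically in $\mathbb H^2$ and to control geodesics and diameters. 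The missing step, which is exactly how the paper closes this gap, is a change of metric after truncation: one puts a hyperbolic metric on $F^c$ with geodesic boundary and pulls it back via $f$ to the truncated spiders so that $(F^c,\Xcal^c,f)$ consists of genuinely compact, convex hyperbolic surfaces. Your verifications of (I2)--(I4) and (S3) survive this metric change unchanged, since peripherality of loops and essentiality of arcs are topological notions; with that adjustment your argument coincides with the paper's proof.
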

  \begin{proof} By (\ref{spiderapprox}) an immersed ideal $3$-spider $(Q,R,p)$ is approximated
  by   an immersed ideal $2$-spider $(F,X,f)$.
  There are excellent decompositions $F=F^c\cup \Vcal_F$
  and $X=X^c\cup \Vcal_X$ where each component of $\Vcal_F$ is a cusp and 
  each component of  $\Vcal_X$
   is a spike and $f$ is excellent
  for this decomposition. There is a hyperbolic metric on $F^c$ so that $F^c$ has geodesic boundary
  and the pullback to $X^c$ using $f$ makes $X$ convex. With these new metrics
  $(F^c,X^c,f|)$ is an immersed  spider. It is routine to check the conclusion follows from (\ref{spiderapprox}).
 \end{proof}
 
An excellent convex  $n$-manifold $M=M^c\cup\Vcal$ is a submanifold of the complete manifold
$M^{\infty}=Th_{\infty}(M)$. Each component $V\subset\Vcal$ is covered by a vertical
subset of some horoball $\Bcal\subset {\mathbb H}^n$. The image of $\Bcal$ in $M^{\infty}$
is a vertical submanifold $V^{\infty}$ that is a thickening of $V$. Let $\Vcal^{\infty}\subset M^{\infty}$ be the union of all such,
then it is a thickening of $\Vcal$ and is the quotient by $\pi_1M$ of a collection of pairwise disjoint horoballs.
Given $\kappa\ge 0$   the {\em relative $\kappa$-thickening} $Th^{rel}_{\kappa}(M)$ of $M$ is the convex hull of
$X_{\kappa}(M)=Th_k(M)\setminus\Vcal^{\infty}$. It is the union of $X_{\kappa}(M)$ and excellent ends in $\Vcal^{\infty}$ 
that lie vertically above $Th_{\kappa}(M)\cap\partial\Vcal^{\infty}$.
It is a thickening of $M$ and it is excellent. It contains a
$\kappa$-neighborhood of $M^c$. If $W=(Q^c,\Rcal^c,p)$ is a wall and $\kappa>0$ then
the {\em $\kappa$-thickened wall} $W^{\kappa}$ consists of the relative $\kappa$  thickenings of $Q$ and $\Rcal$
truncated along the same cusps.
 
      The following allows us to reformulate the problem of finding a simple cover  of a wall  
 with lifts of gluing regions that are
far apart as a corollary of the spider theorem.
 
 \begin{corollary}\label{simplewall} Suppose that $W$ is a wall and $\Scal$ is a spider surface  that approximates $W$.
 Given $\kappa>0$ there is $\delta>0$ such that if $\tilde \Scal^{\delta}$ is a simple embedded
 spider cover of $\Scal^{\delta}$ then the corresponding cover $\tilde W^{\kappa}$ is simple and
 the gluing regions are pairwise disjoint.
 \end{corollary}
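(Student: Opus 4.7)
The strategy is to push the simplicity and embeddedness of $\tilde\Scal^\delta$ through the approximation correspondence to obtain the required properties of the induced wall cover $\tilde W^\kappa$. Recall that it was already observed that a spider cover induces a wall cover, that $\tilde\Scal^\delta$ approximates $\tilde W^\delta$, and that simplicity passes through the approximation in both directions. Consequently, once the wall cover is produced, simplicity of $\tilde W^\kappa$ will follow immediately from simplicity of $\tilde \Scal^\delta$. The real content is to arrange that the gluing regions of $\tilde W^\kappa$ stay pairwise disjoint after the extra $\kappa$-thickening on the wall side.

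First, fix once and for all the data associated with the approximation $\Scal$ of $W$: the diffeomorphism $h:I\times F\longrightarrow Q^c$, the bijection $\theta:A\to B$ with Hausdorff-distance bound $K$, and the bilipschitz constant $L$ of $\tilde h$ on compact cores. Let $Q\longrightarrow Q(F)$ be the bilipschitz homeomorphism whose existence underlies the approximation (from Proposition~\ref{spiderapprox}); denote its bilipschitz constant also by $L$ after enlarging. Then, using the fact that relative $\kappa$-thickening of an excellent manifold is controlled by ordinary $\kappa$-thickening on the compact core together with vertical extension into the cusps, I would choose $\delta$ large enough that any two points in $I\times\tilde F$ at distance at least $\delta$ have images in $\tilde Q^c$ whose relative $\kappa$-thickenings (inside $\tilde Q$, hence inside $\tilde Q^\infty$) are disjoint. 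Concretely, $\delta \ge 2LK + 4L\kappa + 1$ will suffice, with the exact constant being a routine bookkeeping exercise.

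Next, suppose $\tilde\Scal^\delta = (\tilde\Fcal, \tilde\Xcal)$ is a simple embedded spider cover of $\Scal^\delta$, and let $(\pi,\pi'):\tilde W^\delta\longrightarrow W^\delta$ denote the induced wall cover, which approximates $\tilde\Scal^\delta$ by the routine fact noted after Definition~\ref{approxdef}. Because $\tilde\Scal^\delta$ is embedded, distinct components of the lifted thickened spiders are disjoint subsurfaces of $\tilde F$; since each is a $\delta$-thickening of the underlying spider, the un-thickened spider cores lie at pairwise distance at least $\delta$ inside $I\times\tilde F$. Transporting through the equivariant bijection $\theta$ and using the Hausdorff bound $K$ together with bilipschitz distortion $L$, the corresponding gluing regions of $\tilde W$ lie at pairwise distance at least $\delta/L - 2K$ inside $\tilde Q^c$. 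By the choice of $\delta$ this is at least $2\kappa + 1$, so the relative $\kappa$-thickenings of distinct gluing regions are disjoint in $\tilde Q^\infty$, giving the desired pairwise disjointness of gluing regions in $\tilde W^\kappa$.

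Finally, simplicity of $\tilde W^\kappa$ follows from simplicity of $\tilde\Scal^\delta$ via the incidence clause in the definition of approximates: the bijection $\theta$ sends a lifted spider meeting a boundary component $C\subset I\times\partial\tilde F$ to a gluing region meeting $\tilde h(C)$, and vice versa. Hence each component of the base of $\tilde W$ meets exactly one gluing region, which is precisely the definition of a simple wall; thickening to $\tilde W^\kappa$ preserves this combinatorial structure because $\kappa$-thickening is performed componentwise on the excellent ends and preserves the decomposition into base components. The main subtlety in the whole argument is the explicit estimate linking $\delta$ to $\kappa$ through $K$ and $L$, but no new geometric input is required beyond the approximation data and the fact that relative thickening of excellent manifolds respects the Hausdorff control on compact cores.
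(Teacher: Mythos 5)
Your proposal is correct and follows essentially the same route as the paper: both transport the $\delta$-separation of the embedded lifted spiders through the bilipschitz/Hausdorff approximation data to conclude that the relative $\kappa$-thickened gluing regions are far apart (hence disjoint, using the structure of $Th^{rel}_{\kappa}$ as compact part plus vertical ends), with simplicity read off from the incidence clause in the definition of \emph{approximates}. The only cosmetic differences are that you spell out explicit constants where the paper argues qualitatively, and the paper additionally remarks that convexity of $Th^{rel}_{\kappa}(R)$ (its universal cover embedding in ${\mathbb H}^3$) gives embeddedness of each individual thickened gluing region, a point your distance argument also covers implicitly.
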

 \begin{proof} Refer to definition (\ref{spiderapprox}). If $\delta$ is sufficiently large then the corresponding wall
 cover $\tilde W^{\kappa}$ has embedded gluing regions. This is because if $R$ is an ideal spider
 then $Th_{\kappa}^{rel}(R)$ is convex so its
  universal cover  embeds isometrically  in ${\mathbb H}^3$. 
  The argument in the proof of (\ref{gluingregion}) implies if $\delta$ is large enough then the lifts
 of $X_{\kappa}(R)$ to $\tilde W$ corresponding to the lifted spiders in $\tilde\Scal^{\delta}$ are pairwise disjoint.
  The map $h$ is covered by a $K$-bilipschitz homeomorphism
 between $\tilde \Scal^{\delta}$ and $\tilde W^{\kappa}$. Hence for $\delta$ large
 the lifted gluing regions in $\tilde W$ are far
 apart. 
 \end{proof}

 \begin{lemma}\label{convexhull} If $R\subset\overline{\mathbb H}^3$ is any subset, then every point in  $\CH(R)$
 is within a distance $2$ of a geodesic segment with endpoints in $R$. 
 \end{lemma}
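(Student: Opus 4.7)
The plan is to combine Carathéodory's theorem with standard bounds on inradii of hyperbolic triangles and tetrahedra.

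I would first work in the Klein (projective) model of $\overline{\mathbb H}^3$, in which the geodesic compactification is realized as a closed Euclidean ball and hyperbolic geodesic segments are straight Euclidean chords. In this model hyperbolic convex hulls coincide with Euclidean convex hulls, so Carathéodory's theorem in $\mathbb R^3$ applies verbatim and each $p\in \CH(R)$ lies in the convex hull of at most four points of $R$. Consequently $p$ is contained in a closed (possibly degenerate, possibly ideal) hyperbolic tetrahedron $T$ whose four vertices lie in $R$; in particular each of its six edges is a geodesic segment with endpoints in $R$.

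Given $p\in T$, I would then bound the distance from $p$ to an edge by successive orthogonal projection. Let $q\in\partial T$ realize the distance from $p$ to the boundary of $T$; then $q$ lies in some $2$-face $\Delta$ of $T$, and $d(p,q)\le \operatorname{inrad}(T)$. A monotonicity argument -- pushing each vertex of $T$ outward along a ray to the sphere at infinity moves the supporting plane of each adjacent face outward, so the inscribed ball can only grow -- reduces the bound on $\operatorname{inrad}(T)$ to the regular ideal case, where an explicit computation in the upper half-space model (one vertex at $\infty$, three equally spaced on a Euclidean circle in the boundary plane) gives a value strictly less than $1$.

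Next, $\Delta$ is a (possibly ideal) hyperbolic triangle and $q\in\Delta$; let $e$ be the edge of $\Delta$ nearest $q$, which is also an edge of $T$. Then $d(q,e)\le \operatorname{inrad}(\Delta)$, and by the same monotonicity applied in dimension $2$ this is bounded by the inradius of an ideal triangle, namely $\log(1+\sqrt{2})<1$. Summing the two estimates gives $d(p,e)<2$, and $e$ is an edge of $T$ with endpoints in $R$, which proves the lemma.

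The only substantive step is the inradius monotonicity under pushing vertices to the sphere at infinity; this is a standard continuity-and-symmetry argument, and the remaining calculation -- the inradius of the regular ideal tetrahedron in the upper half-space model -- is an elementary exercise, so no real obstacle arises.
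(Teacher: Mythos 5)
Your strategy is sound and is close in spirit to the paper's one-line proof: both begin with Carath\'eodory in the Klein model, so that every point of $\CH(R)$ lies in a (possibly ideal) simplex with at most four vertices in $R$, and then spend a budget of $1+1=2$ on two distance estimates. The paper spends its budget on the thin-triangles constant: a point $p$ of the tetrahedron lies on a segment from a vertex $a$ to a point $q$ of the opposite face, thinness of that face puts $q$ within distance $<1$ of one of its sides, convexity of the distance function then puts $p$ within $<1$ of the triangle spanned by $a$ and that side, and thinness applied once more gives an edge. You instead use a genuinely three-dimensional estimate, the inradius of a hyperbolic tetrahedron, and that is where the one real soft spot lies. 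The monotonicity half is fine (in the Klein model, pushing each vertex to the sphere along the ray from a fixed interior point produces an ideal tetrahedron containing the original, and containment can only increase the inradius), but ``continuity and symmetry'' is not an argument that the regular ideal tetrahedron maximizes the inradius among ideal ones: a symmetric functional need not attain its maximum at the symmetric configuration. Replace it by an actual estimate, for instance: normalize so that the point realizing the inradius is the centre of the Klein ball; an ideal tetrahedron is then a Euclidean tetrahedron inscribed in the unit sphere, and since it is the intersection of the four half-spaces bounded by its face planes, the distance from the centre to its boundary is the minimum of the distances to those planes. The Euclidean inequality $R\ge 3r$ for tetrahedra bounds that minimum by $1/3$, and a plane at Euclidean distance at most $1/3$ from the centre of the Klein ball is at hyperbolic distance at most $\tanh^{-1}(1/3)=\tfrac12\log 2<1$. (Alternatively, one can avoid the three-dimensional step altogether, as the paper does.)

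Two smaller points. The maximal distance from a point of an ideal triangle to its nearest side is $\tfrac12\log 3\approx 0.55$, not $\log(1+\sqrt 2)$; since both are less than $1$ this does not affect your bound, but the constant you name is not the inradius of the ideal triangle. Also say a word about the degenerate cases (fewer than four Carath\'eodory points, or four coplanar ones), which reduce to the planar statement and are immediate.
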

 \begin{proof} This easily follows from the fact that the convex hull of $R$
 is the union of the (ideal) simplices with vertices in $R$ and the thin triangles constant for hyperbolic space
 is less than $1$.\end{proof}
 
  \begin{theorem}\label{QC} Suppose $S$ is a QF surface, so $Q(S)$ is a finite volume
  convex submanifold of the complete QF manifold $M_S$.
Suppose  $F$ is a finite-area hyperbolic surface that is homeomorphic to $S$. 

Then there is a bilipschitz homeomorphism
 $h:M_{S}\longrightarrow M_F$   with $h(Q(S))=Q(F)$ and  a quasi-conformal 
 automorphism 
$H$ of $\overline{\mathbb H}^3$ such that $H|{\mathbb H}^3$ covers $h$.\end{theorem}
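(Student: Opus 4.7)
The plan is to apply classical quasi-conformal deformation theory for Kleinian groups. A homeomorphism $S\longrightarrow F$ induces a type-preserving isomorphism $\rho:\Gamma_S\longrightarrow\Gamma_F$ of holonomy groups that carries peripheral subgroups bijectively to peripheral subgroups (all of which are rank-one parabolic on both sides, since both manifolds have finite volume in their natural senses). The first step is to realize $\rho$ as conjugation by a quasi-conformal homeomorphism $\Phi$ of $\partial\mathbb{H}^3=S^2_\infty$.

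To construct $\Phi$, recall that the domain of discontinuity $\Omega(\Gamma_S)$ consists of two $\Gamma_S$-invariant topological discs, and $\Omega(\Gamma_S)/\Gamma_S$ is a pair of finite-area Riemann surfaces $\Sigma_\pm$ each homeomorphic to $S$. For $\Gamma_F$, the domain of discontinuity consists of the upper and lower half-planes, which quotient to two copies of $F$. Any two finite-area conformal structures on the same punctured topological surface are quasi-conformally equivalent, and the equivalence may be chosen to respect the puncture structure, so we obtain quasi-conformal maps $F\longrightarrow \Sigma_\pm$ realizing $\rho$ up to conjugacy. These lift to $\rho$-equivariant quasi-conformal maps between the corresponding complementary discs in $S^2$. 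Their boundary values agree on $S^1=\partial\mathbb{H}^2$ because each extends continuously to the unique $\rho$-equivariant homeomorphism $\Lambda(\Gamma_F)\longrightarrow\Lambda(\Gamma_S)$. Gluing produces $\Phi:S^2\longrightarrow S^2$.

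Next I extend $\Phi$ to an equivariant quasi-conformal automorphism $H$ of $\overline{\mathbb{H}^3}$ using the Douady--Earle barycentric extension (or the conformally natural convex-hull construction of Epstein--Marden). Equivariance of $H$ forces $H|\mathbb{H}^3$ to cover a homeomorphism $h:M_S\longrightarrow M_F$.

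The main obstacle is upgrading $h$ from quasi-conformal (hence merely quasi-isometric on compact sets) to globally bilipschitz, and arranging the exact equality $h(Q(S))=Q(F)$. On the thick part of the convex core, quasi-conformality combined with the uniform injectivity-radius bound of the thick region yields bilipschitz. Inside horoball neighborhoods of cusps, quasi-conformal maps are in general only quasi-isometric, so one modifies $H$ on a sufficiently small $\Gamma_S$-invariant system of horoballs: because $\rho$ identifies the parabolic subgroups preserving these horoballs, one replaces $H$ there with a standard isometric identification of the two cusps in horospherical coordinates, and interpolates over a compact collar by a bilipschitz map. For the convex-core statement, $H$ sends $\Lambda(\Gamma_S)$ onto $\Lambda(\Gamma_F)$, hence carries $\CH(\Lambda(\Gamma_S))$ into a bounded neighborhood of $\mathbb{H}^2=\CH(\Lambda(\Gamma_F))$; a further bounded bilipschitz adjustment in $M_F$, together with an appropriate choice of the auxiliary open set $U$ used to define $Q(F)$ in the Fuchsian case, arranges $h(Q(S))=Q(F)$ precisely.
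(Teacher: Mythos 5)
Your proposal is correct and takes essentially the same route as the paper: the paper simply cites as well known the equivariant quasi-conformal/bilipschitz conjugacy between the quasi-Fuchsian and Fuchsian groups (the classical theory you spell out via the domain-of-discontinuity construction and a conformally natural extension), and its only added content is the final adjustment achieving $h(Q(S))=Q(F)$ exactly, which you handle the same way — a bounded bilipschitz self-adjustment of $M_F$ using that both sets are a compact piece together with excellent cusp ends. No further comment is needed.
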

\begin{proof} This is well known except for the fact we may arrange $h(Q(S))=Q(F)$.
This follows from the fact there is a bilipschitz self-homeomorphism of $M_F$ 
that takes $Q(F)$ to $h(Q(S))$. This uses 
 that $Q(F)$ and $Q(S)$ are the union of a diffeomorphic compact
part and excellent cusps.
\end{proof}

\gap   
  {\bf Spiders in (relation to) the work of Anderson and  Soma} 
  
  We do not make use of the following.
Suppose $\Gamma_1,\Gamma_2$ are QF subgroups of a Kleinian group $\Gamma$ and
 that $x\in \partial{\mathbb H}^3$ is fixed by non-trivial parabolics $\gamma_1\in\Gamma_1$ 
 and $\gamma_2\in \Gamma_2$.
The subgroup $\Gamma'$ of $\Gamma$ generated by $\gamma_1$ and $\gamma_2$ is discrete and is 
free-abelian of rank $1$ or $2$. If $\Gamma'$ has rank-2 then $\gamma_1$ and $\gamma_2$ translate in different directions
in a horosphere ${\mathcal H}\subset{\mathbb H}^3$ centered at $x$ and represent
 different slopes on the quotient horotorus ${\mathcal H}/\Gamma'$. 
 Let $P(\Gamma_1,\Gamma_2)\subset  \partial{\mathbb H}^3$
denote the  (possibly empty) set of all such points. Anderson calls this the {\em exceptional set}.

The following is an immediate consequence of theorem C in Anderson \cite{Anderson}, see also Soma \cite{Soma}.
\begin{theorem}\label{Andsoma} Suppose $\Gamma_1,\Gamma_2$ are QF subgroups of
a Kleinian group $\Gamma$. Then $\Lambda(\Gamma_1)\cap\Lambda(\Gamma_2)=\Lambda(\Gamma_1\cap\Gamma_2)
\cup P(\Gamma_1,\Gamma_2)$.\end{theorem}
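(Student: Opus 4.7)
\medskip

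\noindent\textbf{Proof proposal (for Theorem \ref{Andsoma}).}
The plan is to prove the two containments separately. The reverse containment is elementary, while the forward one is the substance and will be deduced from Anderson's theorem C rather than reproved from scratch.

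For $\Lambda(\Gamma_1\cap\Gamma_2)\cup P(\Gamma_1,\Gamma_2)\subseteq \Lambda(\Gamma_1)\cap\Lambda(\Gamma_2)$ I would argue directly from definitions. Monotonicity of the limit set under inclusion of subgroups gives $\Lambda(\Gamma_1\cap\Gamma_2)\subseteq \Lambda(\Gamma_i)$ for each $i$. For a point $x\in P(\Gamma_1,\Gamma_2)$, by definition there is a non-trivial parabolic $\gamma_i\in\Gamma_i$ fixing $x$ for each $i=1,2$; the fixed point of any non-trivial parabolic in a Kleinian group is a limit point of that group, so $x\in\Lambda(\Gamma_1)\cap\Lambda(\Gamma_2)$.

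For the forward containment $\Lambda(\Gamma_1)\cap\Lambda(\Gamma_2)\subseteq\Lambda(\Gamma_1\cap\Gamma_2)\cup P(\Gamma_1,\Gamma_2)$ the plan is to invoke theorem C of \cite{Anderson} (with Soma's \cite{Soma} account as an alternative). QF groups are geometrically finite, so the hypotheses of Anderson's theorem are met. That theorem says precisely that, for such groups, the intersection $\Lambda(\Gamma_1)\cap\Lambda(\Gamma_2)$ decomposes as $\Lambda(\Gamma_1\cap\Gamma_2)$ together with an exceptional set of points that are simultaneously fixed by parabolics of $\Gamma_1$ and $\Gamma_2$ whose axes are transverse in a horosphere. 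The remaining bookkeeping is to match this exceptional set with $P(\Gamma_1,\Gamma_2)$ as defined here: a point fixed by parabolics $\gamma_i\in\Gamma_i$ lies either in $\Lambda(\Gamma_1\cap\Gamma_2)$ (when $\langle\gamma_1,\gamma_2\rangle$ has rank $1$, so some positive powers of $\gamma_1$ and $\gamma_2$ coincide and lie in $\Gamma_1\cap\Gamma_2$) or in $P(\Gamma_1,\Gamma_2)$ (when $\langle\gamma_1,\gamma_2\rangle$ has rank $2$, corresponding to different slopes on the quotient horotorus).

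The main obstacle, were one to attempt a self-contained proof, is the forward direction: given $x\in\Lambda(\Gamma_1)\cap\Lambda(\Gamma_2)\setminus\Lambda(\Gamma_1\cap\Gamma_2)$, one must produce parabolic elements of each $\Gamma_i$ fixing $x$. This is where geometric finiteness is used crucially to extract, from sequences $\gamma_n^{(i)}\in\Gamma_i$ with $\gamma_n^{(i)}(y)\to x$, a coherent parabolic structure at $x$ in both groups, and is exactly the content of Anderson's argument. Since this is already available in the literature, the proof here is simply the translation above.
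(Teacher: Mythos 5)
Your proposal is correct and follows essentially the same route as the paper, which simply records this statement as an immediate consequence of theorem C of \cite{Anderson} (see also \cite{Soma}) without further argument. Your added bookkeeping --- the elementary reverse inclusion and the rank-$1$ versus rank-$2$ dichotomy matching Anderson's exceptional set with $P(\Gamma_1,\Gamma_2)$ --- is consistent with how the paper defines $P(\Gamma_1,\Gamma_2)$ and raises no issues.
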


It follows that the universal cover of the convex core of
a gluing region is the convex hull of $\Lambda(\Gamma_1\cap\Gamma_2)\cup P(\Gamma_1,\Gamma_2)$.
The $(\Gamma_1\cap\Gamma_2)$-orbits of points in $P(\Gamma_1,\Gamma_2)$ correspond to the thorns 
forming the spider's legs.

\section{Constructing Prefabricated Manifolds}

In  (\ref{2slopes}) we construct the pieces that are used to build the prefabricated manifold $\prefab$.
These pieces are submanifolds of covering spaces of the manifold $M$ in theorem 
\ref{quasifuchsiansurfacegroups}. 
The main theorem  follows from (\ref{makeprefab}).

Two transverse excellently essential surfaces $J_1,J_2\subset M$ in a hyperbolic $3$-manifold $M=M^c\cup\Vcal$ 
have {\em essential intersections} if every component of $J_1\cap J_2\cap M^c$ is either a circle
that is not homotopic into $\partial M^c$ or an arc that is not homotopic rel endpoints into $\partial M^c$.

\begin{proposition}\label{2slopes} Suppose $M=M^c\cup\Vcal$ is an excellent decomposition of a complete, 
finite volume,  hyperbolic $3$-manifold with cusps
and $\partial M^c=T_1\sqcup\cdots\sqcup T_p$. 
Then there are transverse {\em excellently essential} surfaces $\Jcal_1,\Jcal_2\subset M$
such that every component of these surfaces is QF.
Moreover for every torus $T\subset\partial M^c$ both $T\cap\Jcal_1$ and $T\cap\Jcal_2$   are nonempty and have different slopes.
\end{proposition}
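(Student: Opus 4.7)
The plan is to build $\Jcal_1$ and $\Jcal_2$ independently as disjoint unions of essential, immersed, QF surface components that collectively meet every cusp torus of $M$ with prescribed slopes, and then to perturb the two into transverse position. The main input is the convex combination machinery of \cite{BC1}, applied to the closed geometrically finite surfaces supplied by \cite{CLR}.

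First I would establish the following building block: given any torus $T\subset\partial M^c$ and any essential slope $\alpha$ on $T$, there is an excellently essential QF immersed surface $S(T,\alpha)\looparrowright M$ whose intersection with $T$ contains an essential annulus of slope $\alpha$. The construction begins with a closed, geometrically finite immersed surface $S_0\looparrowright M$ provided by \cite{CLR}. Since $M$ has cusps, $S_0$ is not a virtual fiber, so by Theorem~\ref{3kindssurfacegroup} either $S_0$ is already QF (in which case it must still be modified because it has no cusps) or it carries accidental parabolics. In either case one applies the convex combination theorem (Theorem~\ref{convexcomb}), in a suitable finite cover, to attach tubes across horotori so that each accidental parabolic is converted into a genuine rank-$1$ cusp whose slope can be prescribed at will; this is essentially the tubed surface scheme behind Theorem~9.4 of \cite{BC1} alluded to in the introduction. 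Projecting back to $M$ preserves the QF property, producing $S(T,\alpha)$.

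Second, for each cusp torus $T_i\subset\partial M^c$ I would choose two distinct essential slopes $\alpha_i,\beta_i$ on $T_i$ and set $\Jcal_1:=\bigsqcup_i S(T_i,\alpha_i)$ and $\Jcal_2:=\bigsqcup_i S(T_i,\beta_i)$. By construction every component of each $\Jcal_k$ is QF, every $T_i$ meets both $\Jcal_1$ and $\Jcal_2$, and the slopes realized at $T_i$ include the prescribed pair $\alpha_i,\beta_i$. A piece $S(T_j,\gamma)$ with $j\ne i$ may, however, cross $T_i$ along annuli of uncontrolled ``incidental'' slopes, and these must be kept disjoint between $\Jcal_1$ and $\Jcal_2$. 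To arrange this I would pick all $\alpha_i,\beta_i$ generically relative to the finitely many incidental slopes that appear, and, if residual collisions remain, pass to further finite covers of the individual pieces using subgroup separability (in the spirit of the spider covers of \S2) to push the incidental slopes off the forbidden set. A small smooth perturbation then puts $\Jcal_1$ and $\Jcal_2$ in transverse position without disturbing QF-ness or cusp structure.

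The principal obstacle is the first step: upgrading the geometrically finite surfaces of \cite{CLR} to excellently essential QF surfaces with \emph{prescribed} cusp slopes, which requires a careful application of the convex combination theorem combined with the tubed surface technology of \cite{BC1}. The simultaneous control of the incidental slopes at all cusps in the second step is a secondary technical nuisance, resolved by generic choice of slopes and by further finite covers.
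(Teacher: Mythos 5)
Your building block in step one is where the argument breaks. You claim that, starting from a closed geometrically finite surface of \cite{CLR}, one can ``attach tubes across horotori so that each accidental parabolic is converted into a genuine rank-$1$ cusp whose slope can be prescribed at will.'' The slope cannot be prescribed: an accidental parabolic of the surface group is a specific element lying in a specific rank-$2$ cusp subgroup of $\pi_1M$, and its slope is determined by that element (up to taking powers, which does not change the slope); tubing, cutting, or passing to finite covers of the surface all retain this element or a power of it, hence the same slope. Worse, if the \cite{CLR} surface happens to be QF, it is closed with no parabolics at all, can be homotoped off the cusps, and yields no surface meeting $T$ whatsoever --- your sketch gives no construction in that case. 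The same error undoes your second step: a finite cover of a cusped surface has cusps whose slopes on the tori of $\partial M^c$ are identical to those of the original, so ``passing to further finite covers of the individual pieces'' cannot push the incidental slopes off a forbidden set; genericity of the chosen $\alpha_i,\beta_i$ does not help because the incidental slopes are produced by the pieces themselves and are not under your control. Finally, the proposition as stated (and as used in (\ref{makeprefab})) requires \emph{embedded} surfaces $\Jcal_1,\Jcal_2\subset M$ --- definition (\ref{nicesurface}) is for embedded surfaces, and each $T\cap\Jcal_i$ must carry a single well-defined slope --- whereas your $\Jcal_i$ are unions of immersed pieces which may meet a single torus in several distinct slopes, so even the statement ``$T\cap\Jcal_1$ and $T\cap\Jcal_2$ have different slopes'' is not meaningful for them.

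For contrast, the paper obtains both the embeddedness and the per-torus slope control from character variety methods rather than from \cite{CLR}. The component $X_0$ of the $\SL(2,{\mathbb C})$ character variety containing the complete structure has complex dimension $p$ (Thurston); imposing the $p-1$ equations $\chi^2(\alpha_1)=\chi^2(\alpha_i)$ cuts out a curve $Y$ through $\chi_0$ on which $f=\chi^2(\alpha_1)$ is nonconstant by Mostow--Prasad rigidity, so some ideal point gives a valuation with $\nu(f)<0$. The Culler--Shalen machinery then yields an \emph{embedded} essential surface meeting every torus $T_i$, with slope $\beta_i\ne\alpha_i$, and surgery along annuli as in Lemma~2.3 of \cite{CL1} makes every component QF. Repeating the construction with the $\beta_i$ in place of the $\alpha_i$ gives $\Jcal_2$ whose slope on each $T_i$ differs from that of $\Jcal_1$. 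If you want to salvage your scheme, you need a source of surfaces with boundary of \emph{controlled} slope on every cusp simultaneously; that is exactly what the Culler--Shalen argument provides and what the tubed-surface manipulation of closed \cite{CLR} surfaces does not.
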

\begin{proof} Each homomorphism  $\rho:\pi_1M\longrightarrow SL(2,{\mathbb C})$  
determines a character  $\chi:\pi_1M\longrightarrow{\mathbb C}$ by
$\chi(\alpha)={\rm trace}(\rho\alpha)$.
The character variety $X$ is the set of all such characters. It is an affine algebraic variety over ${\mathbb C}$.
Let $X_0$ be the component of $X$ containing the character of the holonomy   
of the hyperbolic structure on $M$.
Thurston \cite{WPT1} proved that $X_0$ has  complex dimension $p$.

Choose a slope $\alpha_i$ on each $T_i$.  Let $Y=Y(\alpha_1,\cdots,\alpha_p)$ be the subset of $X_0$ 
defined by the $(p-1)$ polynomial equations $\chi^2(\alpha_1)=\chi^2(\alpha_{i})$
for $2\le i\le p$. Then $Y$ is a affine algebraic variety which contains
the character $\chi_0$ of $\rho_0$. This is because at the hyperbolic structure every slope is parabolic
so   $\chi_0^2(\alpha)=4$ for every slope $\alpha$ on every torus in $\partial M$. 

Thus $Y$ has  complex dimension  at least $p-(p-1)=1$. The function $f=\chi^2(\alpha_1)$
 is not zero at points on $Y$ close to $\chi_0$.
This is  because  a representation $\rho$ close to $\rho_0$ with $f=4$ is parabolic on each 
boundary component.  
 Therefore $\rho$ is the holonomy of a complete finite volume hyperbolic structure on $N$. 
 By Mostow-Prasad rigidity
  $\rho$ is conjugate to $\rho_0$. Since $Y$ has dimension at least $1$
   it follows that $Y$ has dimension $1$ and
   $f\ne 4$ in a small deleted neighborhood of $\chi_0$ on $Y$ as asserted.

Thus there is a discrete rank-1 valuation $\nu$ on $Y$ such that $\nu(f)<0$.
The Culler-Shalen machinery (\cite{CS1},\cite{CS2}, cf (9.2) of \cite{BC1}) applied to
$(Y,\nu)$ gives an action of $\pi_1M$ on a simplicial tree  and an 
essential surface $\Jcal_1=\Jcal(\alpha_1,\cdots,\alpha_k)$ dual to this action.
Since $\nu(f)<0$ each $\alpha_i$ acts on the tree without a fixed point. Therefore
this surface has non-empty intersection with every $T_i$ and the slope of $\Jcal_1$ on $T_i$ is some $\beta_i\ne\alpha_i$.
By surgering annuli, as in Lemma (2.3) of \cite{CL1}, 
we may arrange that every component of $\Fcal_1$  is QF.

Now repeat using $Y=Y(\beta_1,\cdots,\beta_p)$. This produces another essential QF surface
 $\Jcal_2=\Jcal(\beta_1,\cdots,\beta_p)$ with slope
$\gamma_i\ne\beta_i$ on $T_i$. It is routine to show  these surfaces can be isotoped to be transverse, and
excellently essential. \end{proof}

 \begin{theorem}\label{makeprefab} If $M$ is a complete, 
 finite volume,  hyperbolic $3$-manifold with cusps
  then there is a prefabricated
 $3$-manifold $\prefab={\Ccal}\cup\Qcal_1\cup\Qcal_2$ with a convex 
 thickening $\CH(\prefab)$ and a local isometry $g:\CH(\prefab)\longrightarrow M$.
  \end{theorem}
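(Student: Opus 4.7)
The plan is to build $Z$ from QF pieces coming from the surfaces produced by Proposition \ref{2slopes}, paired along their intersection regions and filled in at the cusps by rank-2 horocusps, and then apply Corollary \ref{prefabconvex} to obtain a convex thickening.

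First, I apply Proposition \ref{2slopes} to obtain transverse, excellently essential, QF surfaces $\Jcal_1, \Jcal_2 \subset M$ whose slopes on every boundary torus of $M^c$ differ. For each component $J \subset \Jcal_i$, let $Q(J)$ be the associated excellent QF manifold, with its natural projection $Q(J) \longrightarrow M$, and set $\Qcal_i^0$ to be the disjoint union of these pieces. By Theorem \ref{gluingregion}, each pair of components of $\Qcal_1^0, \Qcal_2^0$ whose images meet in $M$ contributes, after passing to appropriate finite covers, immersed ideal $3$-spiders. Packaging all such spiders immersed into a fixed component of $\Qcal_i^0$ together, with $Q^c$ the truncation along cusps, yields walls $W_1, W_2$ in the sense of Definition \ref{walldef}, equipped with a natural pairing of gluing regions: each gluing region sits both in a component of $W_1$ and a component of $W_2$. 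Condition (W4) holds because, by \ref{2slopes}, both $\Jcal_1$ and $\Jcal_2$ meet every cusp torus of $M$.

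Next, by Corollary \ref{spiderapproxwall}, the walls $W_1, W_2$ are approximated by immersed spider surfaces $\Scal_1, \Scal_2$, and the wall pairing transports to a pairing $\tau$ of the spider collections, producing an immersed spider pattern $P = (\Scal_1, \Scal_2, \tau)$. I read off the spider degree $d$ supplied by Theorem \ref{spiderpatterntheorem} (which depends only on $P$); this determines the sizes of the covers up front, so I fix $\kappa = 8(|\Ccal| + |\Qcal_1| + |\Qcal_2| - 1)$ using these bounds, and then choose $\delta > 0$ as in Corollary \ref{simplewall} to match $\kappa$. Re-applying Theorem \ref{spiderpatterntheorem} with this $\delta$ produces a simple embedded spider pattern $\tilde P^{\delta}$ spider covering $P^{\delta}$; Corollary \ref{simplewall} then converts this into simple walls $\tilde W_i^{\kappa}$ with embedded, pairwise $\kappa$-separated gluing regions.

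The QF pieces $\Qcal_1, \Qcal_2$ of $Z$ are the corresponding finite covers of the components of $\Qcal_i^0$ (with rank-$1$ cusps reattached). For each matched pair of gluing regions there is a canonical rank-$2$ cusp of the relevant cover of $M$ containing the two associated rank-$1$ cusps of $\Qcal_1$ and $\Qcal_2$; take $\Ccal$ to be the union of these rank-$2$ horocusps. Simplicity of $\tilde P^{\delta}$ translates into (P1)--(P3): each component of $\Ccal$ contains exactly one rank-$1$ cusp from each $\Qcal_i$, and the core curves intersect once by the slope control of \ref{2slopes}. Condition (P4) holds because each component of $\Qcal_1 \cap \Qcal_2$ is (a cover of) a gluing region, which by construction lies in a rank-$2$ cusp of $\Ccal$. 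Hence $Z = \Ccal \cup \Qcal_1 \cup \Qcal_2$ is a prefabricated manifold. The $\kappa$-thickened version $Z^{\kappa}$ supplied by Corollary \ref{simplewall} satisfies hypotheses (Z1)--(Z6) of Corollary \ref{prefabconvex}: (Z6), no bumping, is precisely the $\kappa$-separation of gluing regions. Corollary \ref{prefabconvex} then produces $\CH(Z) \subset Z^{\kappa}$, and the local isometry $g: \CH(Z) \longrightarrow M$ is assembled from the natural projections of the pieces, which agree on overlaps because the gluings were defined through common images in $M$ and extend to $\CH(Z)$ because the holonomy of the thickening factors through $\pi_1 M$.

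The main obstacle will be the combinatorial and metric bookkeeping tying the spider pattern theorem to the convex combination theorem: namely, checking that the spider cover respects the pairing of gluing regions (so $\tilde P^{\delta}$ genuinely yields a paired wall cover rather than two disjoint ones), and verifying the no-bumping hypothesis (Z6). The latter is exactly where the quantitative interplay between $\kappa$ (set by the number of pieces) and $\delta$ (set by \ref{simplewall}) matters, and where the approximation in Corollary \ref{spiderapproxwall} must be used to transfer a $\delta$-control in the surface to a $\kappa$-control in the $3$-manifold.
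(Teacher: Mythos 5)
Your outline follows the paper's own route (Proposition \ref{2slopes}, then \ref{gluingregion} to get ideal $3$-spiders, walls approximated by a spider pattern via \ref{spiderapproxwall}, the degree $d$ from \ref{spiderpatterntheorem} fixed before $\kappa$ and $\delta$, then \ref{simplewall} and \ref{prefabconvex}), but there is a genuine gap where you construct $\Ccal$ and verify (P1)--(P3). You justify (P3) ``by the slope control of \ref{2slopes},'' yet that proposition only guarantees that $\Jcal_1$ and $\Jcal_2$ have \emph{different} slopes on each cusp torus of $M$; their geometric intersection number there can be any $n\ge 1$, so no horocusp of $M$ itself, nor an arbitrary cover of it, will satisfy (P3). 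The actual mechanism is different: because the covering walls are simple, each end $E$ of $Y=\Qcal_1^{\kappa}\cup\Qcal_2^{\kappa}$ is the union of exactly one rank-$1$ cusp of $\Qcal_1$ and one of $\Qcal_2$ meeting in a single thorn, hence is a product of a ray with a torus minus an open parallelogram; one then attaches the \emph{unique} finite cover $\tilde C$ of the cusp $C\subset\Vcal$ into which $E$ embeds isometrically (the cover corresponding to the $\ZZ^2$ generated by the two parabolic core classes). In $\tilde C$ those two cores form a basis of $\pi_1\tilde C$, so they intersect exactly once; that is where (P3) comes from. This also corrects your indexing of $\Ccal$: its components correspond to ends of $Y$, i.e.\ to spider feet (matched pairs of rank-$1$ cusps), not to matched pairs of gluing regions --- a gluing region has at least two legs and therefore meets at least two components of $\Ccal$, and it is not \emph{contained} in a cusp (its legs merely enter the cusps), which is also the correct phrasing of why (P4) holds.

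Two smaller points. First, your choice $\kappa=8(|\Ccal|+|\Qcal_1|+|\Qcal_2|-1)$ is circular as written: those component counts exist only after the cover (hence after $\delta$, hence after $\kappa$) has been chosen. The paper breaks the circle by setting $\kappa=24\,d\,f$ up front, where $f$ is the total number of spider feet of $P$: the cover has exactly $d\,f$ feet, each piece of $\Qcal_i$ and each component of $\Ccal$ accounts for at least one foot, so $k\le 3df$ and $\kappa\ge 8k$ is verified afterwards; you clearly intend something of this kind, but it is the bound, not the exact count, that is available when $\delta$ is chosen. Second, to get ample spiders (W4) you must apply \ref{gluingregion} once for each intersection point of $\partial\Jcal_1^c\cap\partial\Jcal_2^c$, using that point as the basepoint --- one application per pair of intersecting components, as you propose, produces a single gluing region per pair and may miss some cusps of a multi-cusped component $Q(J)$, and then the spider machinery (which needs every boundary component to carry a foot) does not apply.
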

 \begin{proof} Choose an excellent decomposition $M=M^c\cup\Vcal$. Let $\Jcal_1,\Jcal_2\subset M$
be the excellently essential surfaces given by (\ref{2slopes}). In what follows $i\in\{1,2\}$. Then $\Jcal_i^c=\Jcal_i\cap M^c$
is a compact essential surface in $M^c$ and $\partial\Jcal_i^c$
 is a non-empty set of disjoint, essential, simple closed curves.
The set of intersection points $\Mcal =\partial \Jcal_1^c\cap \partial\Jcal_2^c$ between these curves is  finite.
 Since $\Jcal_1$ and $\Jcal_2$
 each meet every cusp of $M$, and
 have different slopes, $\Mcal$ contains at least one point
 on each component of $\partial\Jcal_i^c$. 
 
{ 
Define an equivalence relation
 on $\{1,2\}\times\Mcal$ by $[i,m]=[i',m']$ iff $i=i'$ and both $m$ and $m'$ are in the same component of $\Jcal_i$.
 Denote this component  $J_{[i,m]}$.
     Then $Q_{[i,m]}=Q(J_{[i,m]})$ is  an excellent convex QF manifold
    and there is a natural projection $f_{[i,m]}:Q_{[i,m]}\longrightarrow M$.
    After thickening if necessary, we may assume there is $q_{i,m}\in Q_{[i,m]}$ with $f_{[i,m]}(q_{i,m})=m$.
Applying  (\ref{gluingregion}) to $(M,Q_{[1,m]},f_{[1,m]})$ and $(M,Q_{[2,m]},f_{[2,m]})$ with 
base points $q_{1,m},q_{2,m}$ and $m$ gives an ideal $3$-spider $R_m$ and
 two immersed ideal $3$-spiders $(Q_{[i,m]},R_m,p_{i,m})$.  
   
Define $\Rcal=\{R_m:m\in\Mcal\}$
and $\Rcal_{[i,m]}\subset\Rcal$ be those ideal 3-spiders that are immersed in $Q_{[i,m]}$.
Observe that $\{\Rcal_{[i,m]}: m\in\Mcal\}$ is a partition of $\Rcal$ for each of $i=1$ and $i=2$.
 There is a wall $W_{[i,m]}=(Q_{[i,m]},\Rcal_{[i,m]},p^*_{[i,m]})$ where $p^*_{[i,m]}|R_k=p_{i,k}$  for
 each $R_k\in\Rcal_{[i,m]}$.    
Condition (W4) (ample spiders) is satisfied because $\Mcal$ contains at least one point
 on each component of $\partial\Jcal_i^c$. 
Define $\Wcal_i=\{ W_{[i,m]}:m\in\Mcal\}$ then 
 $|\Wcal_i|= |\Jcal_i|$. There is a natural homeomorphism $\sigma$
between the ideal spiders  in $\Wcal_1$ and those in $\Wcal_2$ that
sends the copy of $R_m$ in $W_{[1,m]}$ to the copy in $W_{[2,m]}$. This gives a {\em wall pattern} $(\Wcal_1,\Wcal_2,\sigma)$.

By (\ref{spiderapproxwall}) each wall $W_{[i,m]}$ is approximated by a connected immersed spider surface 
$(F_{[i,m]},\Xcal_{[i,m]},f_{[i,m]})$. Combining these we get two spider surfaces $\Scal_1,\Scal_2$ 
approximating $\Wcal_1$ and $\Wcal_2$.
 Moreover $\sigma$ determines a pairing so we
obtain a spider pattern $P=(\Scal_1,\Scal_2,\tau)$.

 Let $d=d(P)>0$ be the constant given by (\ref{spiderpatterntheorem}) for the spider pattern $P$.
  Define $\kappa=24\cdot d\cdot f$ where $f$ is the total number of feet of
  all the spiders $\sqcup_m \Xcal_{[1,m]}$. 
   For this value of $\kappa$ there is $\delta_m>0$ satisfying (\ref{simplewall}) 
     for $Q=Q_{[i,m]}$ and all the ideal spiders $R_m$ it contains, and $F=F_{[i,m]}$ 
    and all the immersed spiders $\Xcal_{[i,m]}$ it contains. 
    The relative thickenings in (\ref{simplewall}) of  $R_m$ are relative to the cusps of 
    $Q_{[i,m]}$ that are the pre images of $\Vcal$.
  Now set $\delta=\max\delta_m$.
 }
 
 By (\ref{spiderpatterntheorem}) there is a simple embedded spider { pattern}
 $\tilde{P}^{\delta}=(\tilde \Scal_1^{\delta},\tilde \Scal_2^{\delta},\tilde{\tau}^{\delta})$ that is a spider cover of spider degree $d$ of a $\delta$-thickening of $P$. 
 Here $\tilde\Scal_i^{\delta}=(\tilde{\Fcal_i},\tilde{\Xcal}_i^{\delta},\tilde{f}^{\delta}_i)$ 
 are the embedded simple spider surfaces of $\tilde{P}^{\delta}$.  
The number of spider feet   of $\tilde\Xcal_i^{\delta}$ is $d\cdot f$. Since the cover
is simple 
  $|\partial\tilde\Fcal_1|=d\cdot f=|\partial\tilde\Fcal_2|$. 

There are relative $\kappa$-thickenings $\Wcal_i^{\kappa}$ of $\Wcal_i$
and covers, $\tilde\Wcal_i ^{\kappa}$, corresponding to $\tilde P^{\delta}$.
By the choice of $\delta$ the component walls of $\tilde\Wcal_i ^{\kappa}$ are simple and the gluing regions are embedded.
The pairing determines a bijection between the lifted gluing regions of $\tilde\Wcal_1 ^{\kappa}$ 
and $\tilde\Wcal_2 ^{\kappa}$. These gluing regions are copies of elements of relative
thickenings of components of $\Rcal$. Corresponding gluing regions are isometric. This
gives a new wall pattern $(\tilde\Wcal_1^{\kappa},\tilde\Wcal_2^{\kappa},\tilde\sigma)$.
The component walls of this pattern are simple and the gluing regions are disjoint.

We now add back the cusps to the walls. 
Define $\Qcal_i^{\kappa}$ to be the disjoint union  $\sqcup_W\CH(W)$ over the walls $W$ in $\tilde\Wcal_i^{\kappa}$.
If we regard the gluing regions as submanifolds of the walls, their convex hulls are ideal $3$-spiders
in $\Qcal_i^{\kappa}$, and $\tilde\sigma$ gives a map between these submanifolds of 
$\Qcal_1^{\kappa}$ and $\Qcal_2^{\kappa}$
that is an isometry on each component. Identifying these submanifolds gives $Y=\Qcal_1^{\kappa}\cup\Qcal_2^{\kappa}$
where $\Qcal_1^{\kappa}\cap\Qcal_2^{\kappa}$ is the union of the ideal $3$-spiders.

Then we glue  on covers of components of $\Vcal$ to each end of $Y$ to obtain a prefabricated manifold
$Z^{\kappa}$.  Each rank-1 cusp of $\Qcal_1$
has been glued to exactly one  rank-1 cusp of $\Qcal_2$ along a thorn.  
These identifications are compatible with
the natural projections { $f_{[i,m]}$} so there is a local isometry $g:Y\longrightarrow M$.

Each end $E$ of $Y$ is a vertical set: it is the union, $B_1\cup B_2$, of 
two vertical rank-1 cusps $B_i\subset\Qcal_i$ and 
$B_1\cap B_2$ is a thorn. 
Thus $E$ is diffeomorphic to the product of a ray and a torus minus an open parallelogram.
The end $E$ projects into a rank-2 cusp  $C\subset \Vcal$. There is a unique
finite cover $\tilde{C}$ of $C$ so that this projection lifts to an isometric embedding. We use this embedding
to glue $\tilde{C}$
 onto $E$ and do this for each end $E$ to obtain $\prefab^{\kappa}$.
Define $\Ccal^{\kappa}$ to be the disjoint union of these $\tilde{C}$. 
The fact the walls are simple ensures (P1)-(P4) thus $\prefab^{\kappa}$ is a prefabricated manifold
$Z^{\kappa}=\Qcal_1^{\kappa}\cup\Qcal_2^{\kappa}\cup\Ccal^{\kappa}$.

Each component of $\Qcal_i^{\kappa}$ contains at least one gluing region. Each gluing region
corresponds to at least $2$ spider feet, so  $|\Qcal_i|^{\kappa}\le d\cdot f$. Also since each component of 
$\Ccal^{\kappa}$ corresponds to a spider foot so
$|\Ccal^{\kappa}| \le d\cdot f$. Hence $k:=|\Qcal_1^{\kappa}|+|\Qcal_2^{\kappa}|+|\Ccal|-1\le 3\cdot d\cdot f$. Our choice
of $\kappa$ above ensures  $\kappa\ge 8k$ as required in (\ref{prefabconvex}).

 Shrinking the cusps gives a submanifold $\Ccal\subset\Ccal^{\kappa}$ such that $\Ccal^{\kappa}=Th_{\kappa}(\Ccal)$.
 This gives a prefabricated manifold $Z=\Qcal_1\cup\Qcal_2\cup\Ccal$ contained in $Z^{\kappa}$.
  Then
  (\ref{prefabconvex}) implies
  $\prefab$ has a convex thickening. We remark that $Z$ and $Z^{\kappa}$ might not be connected, however any component
  will do.
 \end{proof}

\section{\label{compare} Comparison with the proof of Masters and Zhang}

The proof in \cite{MZ2} follows the same general outline. This paper is a result of our attempt
to understand their proof.
They take two (possibly not connected) QF surfaces with
boundary and glue together certain finite covers and add covers of cusps. One difference is they produce
covers so that the degree of the cover of each component surface is the same. We do not do this, but instead use
the condition of {\em simple combinatorics}. This approach avoids certain combinatorial problems concerning the compatibility
of cyclic orderings of intersection points between two surfaces as one traces around different boundary components
of these surfaces. In the approach of Masters and Zhang there
 is a big distinction depending on whether or not $M$ has only one cusp. In
our approach the number of cusps of $M$ plays no role.

We also make use of results from \cite{BC1}, and in particular the convex combination theorem. Masters and Zhang
prove and apply a special case of a version of this result. In \cite{MZ1} they introduced a 
refined version of subgroup separability for a surface with boundary. We found
a new proof  \cite{BC2} of a slight generalization of this theorem, and this result is used heavily in this paper. 
Our proof of the main theorem relies on a study of
coverings of surfaces containing certain immersed surfaces, and in particular the {\em spider theorem} (\ref{spidertheorem}). We wonder if this result about surfaces might find other applications.


 \small
\bibliographystyle{gtart}


\begin{thebibliography}{10}

\bibitem{Anderson}
J.~W. Anderson.
\newblock Intersections of topologically tame subgroups of {K}leinian groups.
\newblock {\em J. Anal. Math.}, 65:77--94, 1995.

\bibitem{BC1}
M.~Baker and D.~Cooper.
\newblock A combination theorem for convex hyperbolic manifolds, with
  applications to surfaces in 3-manifolds.
\newblock {\em J. Topol.}, 1(3):603--642, 2008.

\bibitem{BC2}
M.~D. Baker and D.~Cooper.
\newblock Conservative subgroup separability for surfaces with boundary.
\newblock {\em Algebr. Geom. Topol.}, 13(1):115--125, 2013.

\bibitem{Bowditch}
B.~H. Bowditch.
\newblock Geometrical finiteness for hyperbolic groups.
\newblock {\em J. Funct. Anal.}, 113(2):245--317, 1993.

\bibitem{CL1}
D.~Cooper and D.~D. Long.
\newblock Some surface subgroups survive surgery.
\newblock {\em Geom. Topol.}, 5:347--367 (electronic), 2001.

\bibitem{CLR}
D.~Cooper, D.~D. Long, and A.~W. Reid.
\newblock Essential closed surfaces in bounded {$3$}-manifolds.
\newblock {\em J. Amer. Math. Soc.}, 10(3):553--563, 1997.

\bibitem{CS1}
M.~Culler and P.~B. Shalen.
\newblock Varieties of group representations and splittings of {$3$}-manifolds.
\newblock {\em Ann. of Math. (2)}, 117(1):109--146, 1983.

\bibitem{CS2}
M.~Culler and P.~B. Shalen.
\newblock Bounded, separating, incompressible surfaces in knot manifolds.
\newblock {\em Invent. Math.}, 75(3):537--545, 1984.

\bibitem{KahnM}
J.~Kahn and V.~Markovic.
\newblock Immersing almost geodesic surfaces in a closed hyperbolic three
  manifold.
\newblock {\em Ann. of Math. (2)}, 175(3):1127--1190, 2012.

\bibitem{Ku}
A.~Kurosch.
\newblock Die {U}ntergruppen der freien {P}rodukte von beliebigen {G}ruppen.
\newblock {\em Math. Ann.}, 109(1):647--660, 1934.

\bibitem{MZ1}
J.~D. Masters and X.~Zhang.
\newblock Closed quasi-{F}uchsian surfaces in hyperbolic knot complements.
\newblock {\em Geom. Topol.}, 12(4):2095--2171, 2008.

\bibitem{MZ2}
J.~D. {Masters} and X.~{Zhang}.
\newblock {Quasi-Fuchsian Surfaces In Hyperbolic Link Complements}.
\newblock {\em ArXiv e-prints}, Sept. 2009.

\bibitem{Soma}
T.~Soma.
\newblock Function groups in {K}leinian groups.
\newblock {\em Math. Ann.}, 292(1):181--190, 1992.

\bibitem{WPT1}
W.~P. Thurston.
\newblock Three-dimensional manifolds, {K}leinian groups and hyperbolic
  geometry.
\newblock {\em Bull. Amer. Math. Soc. (N.S.)}, 6(3):357--381, 1982.

\end{thebibliography}


\end{document}